\let\mathcal\mathscr
\numberwithin{equation}{section}
\newtheorem{theorem}{Theorem}
\numberwithin{theorem}{section}
\newtheorem{lemma}[theorem]{Lemma}
\newtheorem{proposition}[theorem]{Proposition}
\newtheorem{corollary}[theorem]{Corollary}
\newtheorem{conjecture}[theorem]{Conjecture}
\theoremstyle{definition}
\newtheorem*{remark}{Remark}
\newtheorem{definition}[theorem]{Definition}
\renewcommand{\phi}{\varphi}
\newcommand{\A}{\mathbf{A}}
\newcommand{\FF}{\mathbb{F}}
\newcommand{\ZZ}{\mathbb{Z}}
\newcommand{\Z}{\ZZ}
\newcommand{\NN}{\mathbb{N}}
\newcommand{\QQ}{\mathbb{Q}}
\newcommand{\Q}{\mathbb{Q}}
\newcommand{\RR}{\mathbb{R}}
\newcommand{\Gal}{{\rm Gal}}
\renewcommand{\emptyset}{\varnothing}
\renewcommand{\leq}{\leqslant}
\renewcommand{\geq}{\geqslant}
\newcommand{\z}{\mathbf{z}}
\renewcommand{\b}{\mathbf{b}}
\newcommand{\fa}{\mathfrak{a}}
\newcommand{\fb}{\mathfrak{b}}
\newcommand{\fp}{\mathfrak{p}}
\DeclareMathOperator{\Mod}{mod} 
\renewcommand{\bmod}[1]{\,(\Mod{#1})}
\newcommand{\sumstar}{\sideset{}{^*}\sum}
\newcommand{\sumprime}{\sideset{}{'}\sum}
\newcommand{\legendre}[2]{\genfrac{(}{)}{}{}{#1}{#2}}
\begin{document}
\title{Average Bateman--Horn for Kummer polynomials}

\author[Francesca Balestrieri and Nick Rome]{FRANCESCA BALESTRIERI\\ The American University of Paris\\ 5 Boulevard de la Tour-Maubourg\\ 75007 Paris\\ France \and\ NICK ROME\\ University of Michigan\\
Department of Mathematics\\
East Hall, 530 Church Street,
Ann Arbor, MI 48109}


\date{\today}

\maketitle

\thispagestyle{empty}

\begin{abstract}
For any prime $r \in \NN$ and almost all $k \in \mathbb{N}$ smaller than $x^r$, we show that the polynomial $f(n) = n^r + k$ takes the expected number of prime values, as $n$ ranges from 1 to $x$. As a consequence, we deduce statements concerning variants of the Hasse principle and of the integral Hasse principle for certain open varieties defined by equations of the form $N_{K/\mathbb{Q}}(\textbf{z}) = t^r + k \neq 0$, where $K/\Q$ is a quadratic extension. 
A key ingredient in our proof is a new large sieve inequality for Dirichlet characters of exact order $r$.
\end{abstract}

\section{Introduction}
One of the most central and classical problems in number theory is understanding how often polynomials take prime values. Indeed, infamous examples include the Prime Number Theorem (concerning the frequency with which the polynomial $x$ takes prime values), the Twin Prime Conjecture (concerning the frequency with which the polynomials $x$ and $x+2$ simultaneously take prime values) and Landau's problem (which asks how often the polynomial $x^2 +1$ takes prime values). A vast generalisation of these problems is provided by the following conjecture of Bateman and Horn~\cite{BH}.
\begin{conjecture}[Bateman-Horn]
Let $f_1, \ldots, f_r \in \ZZ[x_1, \ldots, x_n]$ be distinct irreducible polynomials of degree $d_1, \ldots, d_r$ respectively. If there is no prime which divides $\prod_{i=1}^r f_i(d)$ for every $d \in \NN$,  then 
\[
\sum_{d \leq X} \prod_{i=1}^r \Lambda(f_i(d)) \sim \frac{X}{d_1\cdots d_r}  \prod_p \frac{1 - n_p/p}{(1-1/p)^r},
\]
where $\Lambda$ is the von Mangoldt function and where $n_p$ is the number of solutions to $\prod_{i=1}^r f_i(x) \equiv 0 \bmod{p}$ in $\mathbb{Z}/ p\mathbb{Z}$.
\end{conjecture}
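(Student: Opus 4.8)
\emph{A caveat on scope.} At the outset I would emphasise that the Bateman--Horn conjecture in the stated generality is one of the principal open problems of analytic number theory, beyond present technology already for a single irreducible polynomial of degree $\geq 2$ in one variable (Landau's problem on $x^2+1$ being the emblematic case). The attainable targets are: (i) the heuristic that pins down the constant; (ii) an upper bound of the conjectured order of magnitude; (iii) the asymptotic for polynomials carrying enough algebraic structure; and (iv) an averaged version over a family of polynomials --- the last being what we establish below for the Kummer family $n^r+k$. For (i), write $\Lambda=\mu*\log$ and run inclusion--exclusion: each $f_i(d)$ should be prime with ``probability'' $1/\log f_i(d)\sim 1/(d_i\log d)$ after correcting, prime by prime, for congruence obstructions, the correction factor at $p$ being the ratio $(1-n_p/p)/(1-1/p)^r$ of the true density of admissible residues to the independence model. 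One checks via the Chebotarev density theorem applied to the splitting fields of the $f_i$ that this Euler product converges, and Mertens' theorem assembles the $r$ copies of $1/\log d$ into the claimed main term, with $X/(d_1\cdots d_r)$ arising as the density of $d\leq X$ relative to the degrees.

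For (ii) and (iii), one attacks $S(X):=\sum_{d\leq X}\prod_{i}\Lambda(f_i(d))$ by sieve methods. An upper-bound sieve (Selberg or Brun) immediately gives $S(X)\ll X/(\log X)^r$, of the conjectured size. To go further one inserts a combinatorial identity for $\Lambda$ (Vaughan, or Heath--Brown's identity) and is reduced to controlling Type I sums --- governed by the equidistribution of the values $f_i(d)$ in arithmetic progressions to large moduli --- and Type II (bilinear) sums $\sum_{m}\sum_{n}\alpha_m\beta_n\, g(mn)$, which require genuine cancellation, for instance in character sums $\sum_d\chi(f(d))$ or exponential sums $\sum_d e(\theta f(d))$. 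For the few polynomials where this programme succeeds, the decisive input is always an auxiliary variable supplied by algebraic structure: Gaussian integers for $x^2+y^2$, and the spectral theory of automorphic forms for $x^2+y^4$ (Friedlander--Iwaniec) and $x^3+2y^3$ (Heath--Brown).

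For a genuinely one-variable polynomial of degree $\geq 2$ the Type II sums cannot be estimated by any known technique, and this is the fundamental obstruction: it is the \emph{parity phenomenon} of Selberg, whereby sieve methods alone cannot distinguish integers with an even number of prime factors from those with an odd number, so they deliver the upper bound of (ii) but never the asymptotic. The only route known to break parity without algebraic structure is to average: replace the fixed polynomial by a family $\{f_k\}$ and prove the Bateman--Horn asymptotic for all but $o(\#\{k\})$ members, the sum over $k$ playing precisely the role of the missing extra variable. This is the strategy carried out in the remainder of the paper for $f_k(n)=n^r+k$: the averaged Type II information is extracted from a new large sieve inequality for Dirichlet characters of exact order $r$, which controls the relevant mean values over $k$ of the associated character sums. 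Thus I do not expect to prove the conjecture as literally stated; the honest deliverable is the averaged statement, and the main obstacle --- the parity barrier for a single fixed polynomial --- is exactly what the averaging, and hence the new large sieve, is designed to sidestep.
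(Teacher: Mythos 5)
You are right not to offer a proof: the statement is the Bateman--Horn conjecture, which the paper records as a \emph{conjecture} and does not prove; its full strength is open already for a single one-variable polynomial of degree $\geq 2$, and your account of the heuristic for the constant, the sieve upper bound, the parity obstruction, and the need for either algebraic structure or averaging is accurate and matches the paper's own framing (the paper's actual theorem is the averaged statement for the Kummer family $n^r+k$).

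One point of divergence worth flagging: your description of \emph{how} the averaging is exploited does not match the mechanism in the paper. You frame the averaged input as Type II (bilinear) information obtained after a combinatorial decomposition of $\Lambda$, with the new large sieve for order-$r$ characters supplying the bilinear cancellation. The paper instead runs the circle method in the style of Vinogradov: $\sum_{n\leq x}\Lambda(n^r+k)$ is written as $\int_0^1 S_1(\alpha)S_2(\alpha)e(-\alpha k)\,\mathrm{d}\alpha$ with very large major arcs; the minor arcs are handled by Weyl differencing together with Bessel's inequality after averaging over $k$, while the large sieve for characters of exact order $r$ (via the correspondence with $r$-th power residue symbols over $\Q(\zeta_r)$ and a Hecke-family large sieve) is used in the \emph{major-arc} analysis, to bound the second moment over $k$ of the tail $\Psi(k)$ of the singular series, the point being that the major arcs needed to detect primes are far longer than the $x^{-r}$ scale natural for the polynomial $n^r+k$, so the singular series does not truncate for free. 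So the averaging over $k$ does play the role of the ``missing variable,'' as you say, but it enters through second-moment bounds on the singular-series tail and on the error terms (Gallagher plus Mikawa's theorem), not through a Type I/Type II dissection. Since the conjecture itself is not provable, this is a difference in the description of the paper's strategy rather than a gap in a proof.
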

The only cases for which this conjecture is known to hold are for a single polynomial of degree 1 (this being Dirichlet's theorem on primes in arithmetic progressions), for polynomials given by the norm of a number field, or for polynomials in a large number of variables compared to their degree. The full strength of this conjecture is well out of reach at the moment. A more accessible statement is to average the conjecture over particular families of polynomials.  This is the content of our main theorem, which shows that an average version of the Bateman--Horn conjecture is true almost always for certain polynomials related to Dirichlet characters of exact order $r$, a prime.
\begin{theorem} \label{thm: average} Let $r \in \NN$ be a prime. Let $n_0 \in \Z$ and $M_0 \in \NN$.
For any fixed real $A, B  >0$, we have for $x^r(\log x)^{-A} \le y \le  x^r$ that
\begin{equation}\label{ABH}
\sum_{\substack{k \le y  }} \left | \sum_{\substack{n \le x \\ n\equiv n_0 \bmod{M_0} }} \Lambda(n^r+k) - \mathfrak{S}_{n_0, M_0}(k)x \right |^2 \ll \frac{yx^2}{(\log x)^B},
\end{equation}
where the singular series is given by 
$$
\mathfrak{S}_{n_0, M_0}(k) := \frac{\mathbbm{1}_{\gcd(M_0, n^r_0+k) = 1}}{\phi(M_0)} \prod_{\substack{ p \nmid 2 \cdot M_0\\ p \textup{ prime}}} \left (1 - \frac{n_p-1}{p-1} \right ).
$$
\end{theorem}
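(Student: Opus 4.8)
The plan is to bound the variance in \eqref{ABH} by a Vaughan-type decomposition of $\Lambda$ together with the new large sieve for Dirichlet characters of exact order $r$. Write
\[
E(k):=\sum_{\substack{n\le x\\ n\equiv n_0\bmod{M_0}}}\Lambda(n^r+k)-\mathfrak{S}_{n_0,M_0}(k)\,x,
\]
so that the goal is $\sum_{k\le y}|E(k)|^2\ll yx^2(\log x)^{-B}$. Expanding the square directly produces prime-pair correlations $\sum_m\Lambda(m)\Lambda(m+h)$ with $h=n_1^r-n_2^r$, whose differences form too sparse and structured a set for off-the-shelf averaged Hardy--Littlewood estimates; so instead I would apply a combinatorial identity for $\Lambda$ (Vaughan's, or the more flexible Heath--Brown identity, which is convenient because $n^r+k$ has size $\asymp x^r$ while $n$ runs only up to $x$) to split $\sum_n\Lambda(n^r+k)$ into ``Type I'' sums $\sum_{d\le U}c_d\,\#\{n\le x:\ n\equiv n_0\bmod{M_0},\ n^r\equiv-k\bmod{d}\}$ with $|c_d|\ll\log x$, plus ``Type II'' bilinear sums $\sum_{b\sim B_1}\sum_{c\sim C_1}a_b\beta_c\,\#\{n\le x:\ n\equiv n_0\bmod{M_0},\ bc\mid n^r+k\}$, for parameters to be chosen.

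The common mechanism for both types is to detect the congruence $n^r\equiv-k$ modulo the relevant modulus $q$ (equal to $d$, respectively $bc$) by multiplicative characters: on the coprime ranges, $\mathbbm{1}[n^r\equiv-k\bmod{q}]=\phi(q)^{-1}\sum_{\psi\bmod{q}}\psi^r(n)\bar\psi(-k)$, the non-coprime ranges being peeled off separately. A character $\psi$ with $\psi^r$ nonprincipal carries genuine cancellation in the $n$-variable and is controlled by the classical large sieve in the modulus aspect together with Weyl-type bounds; the principal character reproduces a main term which, summed over $q$ against the coefficients and completed into an Euler product, reassembles into precisely $\mathfrak{S}_{n_0,M_0}(k)\,x$ (the Euler factor at $2$ here being always $1$, so its exclusion is harmless); but because $r$ is prime, the remaining characters are exactly those of order $r$, for which $\psi^r$ is principal, so there is no oscillation in $n$, and one is left to bound, after squaring out the sum over $k\le y$, expressions of the form
\[
\sum_{q\sim Q}\ \sideset{}{^*}\sum_{\substack{\psi\bmod{q}\\ \ord\psi=r}}\Bigl|\sum_{k\le y}\gamma_k\,\psi(k)\Bigr|^2 .
\]

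It is here that the new large sieve inequality enters: it provides a bound genuinely smaller than the generic $Q^2+y$ — the point being the sparsity of characters of exact order $r$ — and this saving is precisely what allows the argument to close; the residual bounded-modulus part of the sum over $q$ is then disposed of by a Siegel--Walfisz estimate for order-$r$ characters, which introduces the usual ineffectivity in $r$. The diagonal $n_1=n_2$ arising from squaring $E$ contributes only $\ll xy(\log x)^{O(1)}$ and is negligible against $yx^2(\log x)^{-B}$.

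I expect this last step to be the main obstacle, and it requires three things to fit together: (i) proving the order-$r$ large sieve with an exponent strong enough that its main term is admissible across the ranges that actually occur; (ii) choosing the decomposition parameters $U$, $B_1$, $C_1$ so that the Type I sums are short enough to treat essentially trivially while the Type II sums land in the favourable range of the large sieve — the real tension being the mismatch between the length $x$ of the $n$-sum and the size $\asymp x^r$ of $n^r+k$, which a priori forces moduli $q=bc$ as large as $x^r\gg x^2$; and (iii) verifying that the main terms extracted at every stage coalesce into exactly $\mathfrak{S}_{n_0,M_0}(k)\,x$, with $L^2(k\le y)$ error of size $o\bigl(yx^2(\log x)^{-B}\bigr)$. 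The hypothesis $y\ge x^r(\log x)^{-A}$ is used precisely to keep the $k$-average long enough for the large sieve to be efficient.
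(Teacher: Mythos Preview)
Your approach is genuinely different from the paper's, and the gap you half-identify in point (ii) is in fact fatal as written.

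The paper does \emph{not} decompose $\Lambda$ via Vaughan or Heath--Brown. It uses the circle method: one writes
\[
\sum_{\substack{n\le x\\ n\equiv n_0\bmod{M_0}}}\Lambda(n^r+k)=\int_0^1 S_1(\alpha)S_2(\alpha)e(-\alpha k)\,\mathrm{d}\alpha,
\]
with $S_1(\alpha)=\sum_{m\le z}\Lambda(m)e(\alpha m)$ and $S_2(\alpha)=\sum_{n}e(-\alpha n^r)$, and splits into major arcs with moduli $q\le Q_1=(\log x)^{c_1}$ and minor arcs. On the major arcs the prime sum $S_1$ is expanded in characters of \emph{tiny} modulus; the main term produces the singular series with a tail $\Psi(k)=\sum_{q>Q_1}\mu(q)\varphi(q)^{-1}\prod_{p\mid q}(n_{k,p}-1)$, and it is the second moment $\sum_{k\le y}|\Psi(k)|^2$ that is controlled by the order-$r$ large sieve (after expressing $n_{k,p}-1$ as a sum over order-$r$ characters). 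The non-principal-character error on the major arcs is handled by Gallagher's lemma together with Mikawa's bound for $\sum_{\chi\bmod q}\int|\sum_{t<m<t+q\Delta}^{\#}\Lambda(m)\chi(m)|^2\,\mathrm{d}t$, and the minor arcs are disposed of by Weyl differencing applied to $S_2$ (after Bessel over $k$). Crucially, no modulus larger than a power of $\log x$ ever meets a character sum in $n$.

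Your scheme, by contrast, forces exactly this collision. In the Type~II range $bc=n^r+k\asymp x^r$, detecting $n^r\equiv -k\bmod{bc}$ by characters produces inner sums $\sum_{n\le x}\psi^r(n)$ against characters of modulus $\asymp x^r$. The claim that ``$\psi^r$ nonprincipal carries genuine cancellation in the $n$-variable'' is exactly what fails: P\'olya--Vinogradov gives only $\ll(bc)^{1/2}\log(bc)\asymp x^{r/2}\log x$, worse than the trivial bound $x$ once $r\ge3$, and the classical large sieve over such moduli costs $Q^2\asymp x^{2r}$. There is no Weyl-type input here because $\sum_n\psi^r(n)$ is a pure multiplicative-character sum, not an exponential sum with polynomial phase. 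So the ``tension'' you flag in (ii) is not a matter of parameter optimisation but a structural obstruction: the $n$-sum is simply too short relative to the moduli that any combinatorial decomposition of $\Lambda(m)$ with $m\asymp x^r$ must produce.

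The circle method sidesteps this by separating the two detections: primality is captured by $S_1$ (handled with small moduli and Mikawa), while the $r$-th-power structure lives entirely in $S_2$ (handled by Weyl on the minor arcs). The order-$r$ large sieve then enters only to truncate the singular series, where the moduli carry decaying weights $\mu(q)/\varphi(q)$ and the relevant $Q$-ranges are manageable. If you want to salvage a dispersion-style argument, you would need a replacement for Type~II that does not pass through characters of modulus $\asymp x^r$; the paper's route is to avoid Type~II entirely.
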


Theorem~\ref{thm: average} immediately implies the following result.

\begin{corollary} \label{cor}   Let $r \in \NN$ be a prime. Let $n_0 \in \Z$ and $M_0 \in \NN$.
For any fixed real $A, B, C >0$ and for $\mathfrak{S}_{n_0, M_0}(k)$ as defined in Theorem~\normalfont{\ref{thm: average}}, we have for $x^r(\log x)^{-A} \le y \le x^r$ that
$$
\sum_{\substack{n \le x \\ n \equiv n_0 \bmod{M_0} }} \Lambda(n^r +k) =  \mathfrak{S}_{n_0, M_0}(k) x  + O\left (\frac{x}{(\log x)^B} \right )
$$
for all $k$  up to $y$ with at most $O \left  (y(\log x)^{-C} \right  )$ exceptions.
\end{corollary}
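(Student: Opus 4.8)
The plan is to deduce this pointwise, almost-all statement directly from the mean-square estimate \eqref{ABH} by a routine second-moment (Chebyshev) argument; no new analytic input is needed. Fix the reals $A, B, C > 0$ and set $B' := 2B + C$, which is again a fixed positive real. First I would apply Theorem~\ref{thm: average} with $B'$ in place of $B$, so that, writing
\[
E(k) := \sum_{\substack{n \le x \\ n \equiv n_0 \bmod{M_0}}} \Lambda(n^r + k) - \mathfrak{S}_{n_0, M_0}(k)\, x
\]
for the error term, one has $\sum_{k \le y} |E(k)|^2 \ll y x^2 (\log x)^{-B'}$ uniformly in the range $x^r (\log x)^{-A} \le y \le x^r$.

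Next I would introduce the exceptional set $\mathcal{E} := \{\, k \le y : |E(k)| > x (\log x)^{-B} \,\}$ and estimate its cardinality. By the defining inequality for $\mathcal{E}$,
\[
|\mathcal{E}| \cdot \frac{x^2}{(\log x)^{2B}} \;\le\; \sum_{k \in \mathcal{E}} |E(k)|^2 \;\le\; \sum_{k \le y} |E(k)|^2 \;\ll\; \frac{y x^2}{(\log x)^{B'}},
\]
whence $|\mathcal{E}| \ll y (\log x)^{2B - B'} = y (\log x)^{-C}$. For every $k \le y$ with $k \notin \mathcal{E}$ we have, by definition of $\mathcal{E}$, that $|E(k)| \le x (\log x)^{-B}$, which is exactly the asserted asymptotic with error term $O(x(\log x)^{-B})$. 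This proves the corollary, with the number of exceptional $k$ up to $y$ being $O(y(\log x)^{-C})$.

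Since the argument is essentially bookkeeping, I do not anticipate any genuine obstacle: all of the analytic difficulty has already been absorbed into the proof of \eqref{ABH}. The only points to verify are that the implied constant in the final error term depends only on $r$, $n_0$, $M_0$, $A$, $B$, $C$ (it does, being inherited from the single application of Theorem~\ref{thm: average} with parameter $B'$), and that the admissible range of $y$ is transferred verbatim. If one prefers to leave $C$ unspecified, one simply observes that the exponent of $\log x$ saved in the bound for $|\mathcal{E}|$ equals $B' - 2B$, which may be made as large as desired by choosing $B'$ large in Theorem~\ref{thm: average}.
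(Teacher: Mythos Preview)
Your argument is correct and is precisely the standard second-moment (Chebyshev) deduction the paper has in mind when it states that Theorem~\ref{thm: average} ``immediately implies'' the corollary; the paper gives no further proof. The choice $B' = 2B + C$ and the bound $|\mathcal{E}| \ll y(\log x)^{2B - B'} = y(\log x)^{-C}$ are exactly the required bookkeeping.
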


When $M_0 = 1$, this subsumes previous results by Baier--Zhao~\cite{BZ},  Foo--Zhao~\cite{FZ}, and is analogous to Zhou\cite{zhou}. Recently, Sofos and Skorobogatov~\cite{SS} have independently investigated an average form of the Bateman--Horn conjecture and shown that, if we order polynomials by the size of their coefficients, then 100 \%  of them satisfy the Bateman--Horn conjecture. However, their work and their techniques cannot be used to provide insight into thin families of polynomials such as the ones that we consider in this paper (more discussion on this point will follow in the next subsection). The more arithmetic motivation for our work (as well as that of ~\cite{SS}) is that many results about the qualitative behaviour of rational points on varieties with a fibration structure are known to hold under the so-called Schinzel's hypothesis, which is a special case of the Bateman--Horn conjecture. Indeed the first such example of this is the use of Dirichlet's theorem on primes in arithmetic progressions in the proof of the Hasse--Minkowski theorem. The idea of using Schinzel's hypothesis and fibration techniques to deduce arithmetic statements (such as the Hasse principle or its refinements using the Brauer-Manin set, see e.g.  ~\cite{S} for definitions) is originally due to Colliot-Th{\'e}l{\`e}ne and Sansuc \cite{CTSschinzel} and was later futher developed by several authors in \cite{CTSD}, \cite{CTSkSD}, \cite{CTSkSD2}, and, more recently, in \cite{W}, \cite{Mi}, \cite{CWX}. In particular, it is known that, conjecturally on Schinzel's hypothesis,  the Brauer-Manin obstruction is the only one for the Hasse principle  and for the integral Hasse principle  on certain normic varieties (see e.g. \cite{CTS1}, \cite{CTSschinzel},  \cite{CTS2},  \cite{Mi}, \cite{CWX}). Furthermore, in the advent of breakthroughs in additive combinatorics by Green--Tao--Ziegler~\cite{GTZ}, it was possible to prove unconditional results of this type for fibrations whose degenerate fibres are all defined over $\QQ$ (see \cite{BMS} and \cite{HSW}).
Since the average Bateman--Horn result in Theorem~\ref{thm: average} also acts as a replacement on average for Schinzel's hypothesis, one can try to deduce, unconditionally, applications to the arithmetic statistics of the Hasse principle and integral Hasse principle. As a proof of concept, in this paper we give one such application. 
Consider the open affine variety
\begin{equation}\label{yztk} \mathcal{X}_{a,r,k} : y^2- az^2 =  t^r + k \neq 0 \end{equation}
for $a \in \Z-\{0,1\}$ squarefree, $r $ a prime, and $k$ a positive integer.

\begin{theorem}[Theorem \ref{chat}] Let $a \in \Z - \{0,1\}$ be squarefree and such that 2 does not ramify in $\mathcal{O}_{\Q(\sqrt{a})}$.  Let $r \in \NN$ be any prime such that $p \not\equiv 1 \bmod{r}$ for all primes $p|a$. Then for 100\%  of $k \in \NN$ (ordered naively by size) we have $\mathcal{X}_{a,r,k}(\Q) \neq \emptyset$. If, moreover, $\mathcal{O}_{\Q(\sqrt{a})}$ has narrow class number at most 2, then  for 100\%  of $k \in \NN$ (ordered naively by size) we have $\mathcal{X}_{a,r,k}(\Z) \neq \emptyset$.
\end{theorem}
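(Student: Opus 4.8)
The plan is to exploit the conic fibration $\mathcal{X}_{a,r,k}\to\AA^{1}$, $(y,z,t)\mapsto t$, whose fibre over $t$ is an open subset of the affine conic $y^{2}-az^{2}=t^{r}+k$, and to produce $t=n\in\Z$ with $n\leq x$ for which $n^{r}+k$ is a prime in a prescribed residue class, forcing the fibre to have a rational (resp.\ integral) point. Write $K=\Q(\sqrt{a})$. Since $2$ is unramified, $K$ has odd discriminant $a$ and the Kronecker symbol $\legendre{a}{\cdot}$ has conductor $|a|$; so a prime $p\equiv1\bmod{|a|}$ satisfies $\legendre{a}{p}=1$ and is everywhere locally a norm from $K$ (a unit square at each prime dividing $a$, a norm at $p$ since $p$ splits there, and positive when $a<0$), hence a global norm, and $y^{2}-az^{2}=p$ has a $\Q$-point. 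Because $p\not\equiv1\bmod{r}$ for every $p\mid a$, the map $x\mapsto x^{r}$ is a bijection of each $\FF_{\ell}$ with $\ell\mid a$, hence of $\Z/|a|\Z$; so for every $k$ there is a unique $n_{0}=n_{0}(k)\bmod{|a|}$ with $n_{0}^{r}+k\equiv1\bmod{|a|}$. We feed $M_{0}=|a|$ and this $n_{0}$ into Corollary~\ref{cor}, applied dyadically in $y$; we may assume $r\geq3$, since for $r=2$ there is no admissible $a$.

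For the rational-point statement I would first check that $\mathfrak{S}_{n_{0},M_{0}}(k)$ is a convergent product of strictly positive factors that is not abnormally small: $\gcd(|a|,n_{0}^{r}+k)=1$; each factor $1-(n_{p}-1)/(p-1)$ is positive since $n_{p}\leq r<p$ for $p>r$ and $n_{p}\leq1$ for $p\leq r$; and $n_{p}-1\neq0$ only when $p\equiv1\bmod{r}$ and $-k\in(\FF_{p}^{\times})^{r}$, in which case the corresponding summand of $\log\mathfrak{S}_{n_{0},M_{0}}(k)$ equals $p^{-1}\bigl(1-r\,\mathbbm{1}[-k\in(\FF_{p}^{\times})^{r}]\bigr)+O(p^{-2})$, and these summands have mean $O(p^{-2})$ and summable variance over $k\leq y$, so $\mathfrak{S}_{n_{0},M_{0}}(k)\gg1$ for $100\%$ of $k$. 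Taking $x\asymp y^{1/r}$, Corollary~\ref{cor} then yields $\sum_{n\leq x,\,n\equiv n_{0}\bmod{|a|}}\Lambda(n^{r}+k)\gg x$ for $100\%$ of $k\leq y$. Removing the proper prime powers is harmless: the number of triples $(n,j,k)$ with $n\leq x$, $j\geq2$, $k\leq y$ and $n^{r}+k$ a perfect $j$-th power is $O(y^{1/2+1/r+o(1)})=o(y)$, where $r\geq3$ is used, so for $100\%$ of $k$ that part of the sum is negligible. Hence for $100\%$ of $k$ there is $n\leq x$ with $n\equiv n_{0}\bmod{|a|}$ and $n^{r}+k=p$ prime, $p\equiv1\bmod{|a|}$; a $\Q$-point of $y^{2}-az^{2}=p$ together with $t=n$ gives a point of $\mathcal{X}_{a,r,k}(\Q)$, with $p\neq0$.

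For the integral-point statement I would enlarge $M_{0}$ to $2|a|$ and also fix $n_{0}\bmod{2}$ (according to $k\bmod{2}$) so that $p=n^{r}+k$ is odd; nothing above changes. Such a $p$ gives integral points over every $\Z_{v}$ — at $\infty$ (if $a<0$) since $p>0$, at $\ell\mid a$ since $p\equiv1\bmod{\ell}$ is a square in $\Z_{\ell}$, at $p$ since $p$ splits, and at $2$ since $y^{2}-az^{2}$ represents every $2$-adic unit when $2$ is unramified — so there is no local obstruction, for any $k$. For the global point I would use genus theory: since $h^{+}(\mathcal{O}_{K})\leq2$, the narrow Hilbert class field of $K$ equals its genus field, which is abelian over $\Q$ of conductor dividing $|a|$, so $p\equiv1\bmod{|a|}$ forces the prime $\mathfrak{p}\mid p$ into the trivial narrow ideal class of $\mathcal{O}_{K}$, whence $p=N_{K/\Q}(\alpha)$ with $\alpha\in\mathcal{O}_{K}$ and $N(\alpha)=p$. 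When $a\equiv1\bmod{8}$ the descent to $\Z[\sqrt{a}]$ is automatic: writing $\alpha=(u+v\sqrt{a})/2$ with $u\equiv v\bmod{2}$, the case $u,v$ both odd would force $4p=u^{2}-av^{2}\equiv1-a\equiv0\bmod{8}$, impossible for $p$ odd, so $u,v$ are even and $p=(u/2)^{2}-a(v/2)^{2}$, giving the required point on the fibre $t=n$.

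The main obstacle is the residual case $a\equiv5\bmod{8}$, where $2$ is inert in $\mathcal{O}_{K}$: here $\mathfrak{p}$ principal in $\mathcal{O}_{K}$ need not imply that $\mathfrak{p}\cap\Z[\sqrt{a}]$ is principal in the order $\Z[\sqrt{a}]$, and the relevant quotient of the ring class group of $\Z[\sqrt{a}]$ can be cyclic of order $3$, which no congruence on $p$ detects. I would circumvent this by not relying on a single prime: for $100\%$ of $k$, Corollary~\ref{cor} in fact produces $\gg x/\log x$ primes $n^{r}+k$ in the chosen progression, and a variance argument — equivalently, an average Bateman--Horn bound twisted by the two order-$3$ ring class characters of $K$, provable with the same circle-method and large-sieve input as Theorem~\ref{thm: average} — shows that for $100\%$ of $k$ these primes equidistribute among the three ideal classes; a positive proportion are then represented by $y^{2}-az^{2}$, which completes the proof.
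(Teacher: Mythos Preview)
Your overall strategy is sound and in fact more concrete than the paper's. Where the paper sets up the family $\mathcal{X}_{a,r}\to\AA^{1}_{x}\times\AA^{1}_{t}$, defines ``primitive'' local solution sets $U_p$ at the ramified primes, and uses a compactness argument to extract finitely many congruence boxes $(n_0,M_0)$ before invoking Corollary~\ref{cor}, you simply write down the congruence $n_0^r+k\equiv 1\bmod{|a|}$ directly, exploiting that $x\mapsto x^r$ is a bijection on $\Z/|a|\Z$. This is cleaner and gives the same conclusion: a prime $q=n^r+k$ with $q\equiv 1\bmod{|a|}$, hence $q$ a local norm at every place (unit square at $\ell\mid a$, split at $q$, positive at $\infty$), hence a global norm by Hasse. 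Your handling of the singular series lower bound and of the prime-power contribution is also more explicit than the paper, which glosses over both.

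For the integral statement the two approaches diverge. The paper works with the model $N_{K/\Q}(\mathbf{z})=t^r+k$ over $\mathcal{O}_K\otimes\A^1_\Z$ and, having produced a split prime $q$ with $(a/q)=1$, simply cites Mitankin's result \cite[Prop.~1.2 and Rem.~3.1]{Mi} to obtain an $\mathcal{O}_K$-point on the fibre. Your genus-theory argument --- $h^+(\mathcal{O}_K)\leq 2$ forces the narrow Hilbert class field to coincide with the genus field, which sits inside $\Q(\zeta_{|a|})$, so $q\equiv 1\bmod{|a|}$ already puts $\fp\mid q$ in the trivial narrow class --- reproves exactly this, without the external reference. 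Up to this point your proof is complete and self-contained for the variety as the paper actually treats it in Section~\ref{sec:chats}.

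The gap is in what you attempt beyond this. You read the statement via the equation $y^2-az^2=t^r+k$ and therefore try to descend from $\mathcal{O}_K$ to the order $\Z[\sqrt{a}]$. Your parity argument for $a\equiv 1\bmod 8$ is fine, but your proposed fix for $a\equiv 5\bmod 8$ is not a proof: showing that the primes $n^r+k$ (for $n$ in a fixed progression) equidistribute among the three classes of the conductor-$2$ ring class group would require an analogue of Theorem~\ref{thm: average} with $\Lambda$ twisted by a non-abelian (over $\Q$) ring class character, and this is not what the circle-method machinery of the paper supplies. If you want the $\Z[\sqrt{a}]$-version in that case, you either need to carry out that twisted second-moment computation in full, or else follow the paper and appeal to \cite{Mi}.
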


The proof of this theorem consists in establishing, under the relevant conditions on $a$, that some variants of the Hasse principle and of the integral Hasse principle hold for $\mathcal{X}_{a,r,k}$. As our main tool, we use the analytic input together with a modification,  first appearing in \cite{SS}, of standard fibration method arguments. 
However, whilst averaging over large families in \cite{SS} allows them to get similar statements as the ones in this paper for an unspecified positive proportion of the associated varieties, by working closely with our much thinner family we are able to prove results for 100\% of the $\mathcal{X}_{a,r,k}$.
In other words, averaging over only one coefficient, as is the case in our thin family, makes the analytic part more difficult but yields more precision in the arithmetic applications.
This tight interplay between geometry and analysis is in the spirit of the results of \cite{BMS} and \cite{HSW}.
Theorem \ref{chat} immediately implies the following corollary, analogous to \cite{DH}.
\begin{corollary} Let 
\[\mathcal{L}:= \{  3, 7,  11, 15, 19, 35, 43, 51,  67, 91, 115, 123, 163, 187, 235, 267, 403, 427 \}.\]
Let $d \in \mathcal{L}$ and let $r_d \geq 3$ be any prime such that $p \not\equiv 1 \bmod{r_d}$ for all primes $p|d$. Then 100\% of positive integers $k$  can be written as 
\[ k = n_1^2 + d n_2^2 + n_3^{r_d}\]
for some $n_1, n_2, n_3 \in \Z$.

\end{corollary}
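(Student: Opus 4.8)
The plan is to deduce the statement directly from the preceding theorem (Theorem~\ref{chat}), applied with $a = -d$ and $r = r_d$. The first step is an elementary algebraic translation: since $r_d \geq 3$ is an odd prime, $(-t)^{r_d} = -t^{r_d}$, so every integral point $(y,z,t)$ on $\mathcal{X}_{-d,r_d,k}\colon y^2 + d z^2 = t^{r_d}+k$ yields a representation $k = y^2 + d z^2 - t^{r_d} = n_1^2 + d\,n_2^2 + n_3^{r_d}$ with $(n_1,n_2,n_3) = (y,z,-t) \in \Z^3$. Hence the set of $k$ for which $\mathcal{X}_{-d,r_d,k}(\Z) \neq \emptyset$ is contained in the set of $k$ admitting the desired representation, and it suffices to show the former has natural density $1$.

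Next I would verify the hypotheses of Theorem~\ref{chat} for $a = -d$. Each $d \in \mathcal{L}$ is squarefree and satisfies $d \equiv 3 \pmod{4}$; hence $-d$ is squarefree, lies in $\Z - \{0,1\}$, and satisfies $-d \equiv 1 \pmod{4}$, so that $\Q(\sqrt{-d})$ has discriminant $-d$ and $2$ is unramified in $\mathcal{O}_{\Q(\sqrt{-d})}$. The primes dividing $-d$ are exactly those dividing $d$, so the requirement ``$p \not\equiv 1 \pmod{r}$ for all primes $p \mid a$'' is precisely the hypothesis imposed on $r_d$. The rational part of Theorem~\ref{chat} already gives $\mathcal{X}_{-d,r_d,k}(\Q) \neq \emptyset$ for 100\% of $k$; for the corollary we need the integral conclusion, which requires in addition that $\mathcal{O}_{\Q(\sqrt{-d})}$ have narrow class number at most $2$. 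Since $\Q(\sqrt{-d})$ is imaginary quadratic it has no real places, so its narrow class number coincides with its ordinary class number, and one checks that $\mathcal{L}$ is exactly the set of squarefree $d \equiv 3 \pmod{4}$ for which $\Q(\sqrt{-d})$ has class number $1$ or $2$. Granting this, Theorem~\ref{chat} yields $\mathcal{X}_{-d,r_d,k}(\Z) \neq \emptyset$ for 100\% of $k$, and the translation from the first step completes the proof.

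There is no substantive obstacle internal to this deduction: the corollary is a formal consequence of Theorem~\ref{chat}, and the work is essentially bookkeeping --- the sign flip exploiting the oddness of $r_d$, the congruence $d \equiv 3 \pmod{4}$ governing the ramification of $2$, and the identification of the prime divisors of $a$ with those of $d$. The only genuinely external input is the classical determination of the imaginary quadratic fields of class number at most $2$ (Heegner--Baker--Stark), which is what allows one to pin down the explicit list $\mathcal{L}$; in practice one simply verifies $h(\Q(\sqrt{-d})) \leq 2$ for each of the eighteen values in $\mathcal{L}$.
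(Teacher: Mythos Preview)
Your proof is correct and follows essentially the same approach as the paper: verify that every $d\in\mathcal{L}$ is a squarefree positive integer with $-d\equiv 1\pmod 4$ (so $2$ is unramified in $\mathcal{O}_{\Q(\sqrt{-d})}$) and that $\Q(\sqrt{-d})$ has (narrow) class number at most $2$, then apply Theorem~\ref{chat} with $a=-d$. Your write-up is simply more explicit about the sign-flip $k=y^2+dz^2-t^{r_d}=n_1^2+dn_2^2+(-t)^{r_d}$ and about why narrow and ordinary class numbers coincide, both of which the paper leaves implicit.
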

\begin{proof} Since for imaginary quadratic number fields  the class number $\leq 2$ problem has been solved and the class number and the narrow class number coincide, it is easy to check that the list $\mathcal{L}$ consists precisely of those squarefree $d \in \NN$ such that $\Q(\sqrt{-d})$ has (narrow) class number at most 2 and 2 does not ramify in $\mathcal{O}_{\Q(\sqrt{-d})}$. Hence, we can apply Theorem \ref{chat}.
\end{proof}


\subsection{Proof Outline}
Our proof of Theorem~\ref{thm: average} follows closely the outline in \cite{BZ}. The idea is to use the circle method, in a manner similar to Vinogradov's theorem, writing
\[
\sum_{n \leq x} \Lambda(n^r + k)
=
\int_0^1 \sum_{m \leq x^r + k}\Lambda(m)e(\alpha m) \sum_{n \leq x}e(- \alpha(n^r + k)) \mathrm{d}\alpha,\]
where we are using the standard notation $e(w) := \exp(2 \pi i w)$.
The difficulty with applying the Vinogradov approach for non-linear polynomials  is that, in order to detect primes, one must take very large major arcs, whereas, in order to detect integers represented by a polynomials of degree $r$, one typically has major arcs of length $x^{-r}$ (however our major arcs will in fact be of size $x^{1-r}$, c.f.\ Section \ref{section: major arc}). This means that our major arc contribution will not converge: indeed, the  bulk of our work consists in bounding the second moment of the tail of this contribution.
In the major arcs, the way we detect when an integer is an  $r$-th power is  by developing a large sieve for characters of exact order $r$, a result which we believe will be of independent interest. 
\begin{theorem} \label{largesieve} Let $r \geq 2$ be any integer. 
Let $(a_m)_{m \in \mathbb{N}}$ a sequence of complex numbers, supported on the squarefree integers. Then
\begin{equation}\label{eq:sieve}
\sum_{Q < q \leq 2Q} \sum_{\substack{\chi \Mod q\\ \chi^r = \chi_0\\ \chi \neq \chi_0}}
\left \vert  \sum_{M < m \leq 2M} a_m \chi(m) \right \vert^2
\ll_{\epsilon}
\Delta(Q,M)
\sum_{\substack{M < m \leq 2M\\ \gcd(m, r) =1}} \vert a_m \vert^2
\end{equation}
where $\chi$ denotes an order $r$ Dirichlet character modulo $q$, $\chi_0$ is the principal character, and 
\begin{equation}\label{eq:lsconst}
\Delta(Q,M) = (QM)^{\epsilon}\min\{Q^2 + M, Q^{3/2} + Q^{1/2}M, Q^{2/3}M+Q^{4/3}, Q +  M^{\frac{r+2}{3}}Q^{1/3} + M^{r - 1/2} \}.
\end{equation}
\end{theorem}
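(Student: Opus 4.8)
The plan is to prove \eqref{eq:sieve} by combining the classical large sieve with an \emph{amplification} exploiting the constraint $\chi^{r}=\chi_{0}$, used both directly in the $m$-aspect and, after transposing, in the $q$-aspect; each of the four quantities inside the $\min$ in \eqref{eq:lsconst} will come out of one of these arguments in a particular range of $Q$ against $M$, and the stated bound follows by partitioning the $(Q,M)$-plane. Throughout one may assume, consistently with the right-hand side of \eqref{eq:sieve}, that $a_{m}$ is supported on squarefree $m$ with $(m,r)=1$.

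As a preliminary step I would reduce to primitive characters. Since $q$ is squarefree, an order-dividing-$r$ character $\chi\bmod q$ is induced by a primitive character $\chi^{*}$ of the same type to a squarefree modulus $f\mid q$ all of whose prime factors $p$ satisfy $\gcd(r,p-1)>1$; writing $q=fs$ with $(f,s)=1$ and $s$ squarefree in a dyadic range, the coprimality condition $(m,s)=1$ implicit in $\sum_{m}a_{m}\chi^{*}(m)$ is removed by Möbius inversion at the cost of a factor $(QM)^{\epsilon}$. In particular, forgetting the order constraint and applying the Montgomery--Vaughan large sieve gives the term $Q^{2}+M$.

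For the remaining terms the key point is that $\chi(m)=\chi(mt^{r})$ whenever $(t,q)=1$. Fixing an amplifier length $T\geq (QM)^{\epsilon_{0}}$ one checks that $N(q,T)\sum_{M<m\leq 2M}a_{m}\chi(m)=\sum_{\ell}b_{\ell}\chi(\ell)$, where $N(q,T)=\#\{t\leq T:(t,q)=1\}\gg T/\log\log Q$, the sequence $b_{\ell}=\sum_{t\leq T,\ t^{r}\mid\ell,\ M<\ell/t^{r}\leq 2M}a_{\ell/t^{r}}$ is supported on $\ell\ll MT^{r}$ and is independent of $q$ (since $t^{r}\mid\ell$ and $(\ell,q)=1$ force $(t,q)=1$), and a divisor bound gives $\sum_{\ell}|b_{\ell}|^{2}\ll (MT)^{\epsilon}T\sum_{m}|a_{m}|^{2}$. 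Feeding the $b_{\ell}$ into the classical large sieve and optimising $T$ yields a bound of the shape $M^{1/r}Q^{2(r-1)/r}$. Transposing the roles of the modulus $q$ and the variable $m$ by means of a reciprocity law for $r$-th power residues --- which rewrites $\chi_{q}(m)$ as a character in $q$ of modulus essentially $m$, so that $q$ becomes a summation variable over an interval, and which for general $r$ is cleanest over $\ZZ[\zeta_{r}]$ (the assumption $(m,r)=1$ being what makes the symbol and the reciprocity behave at the prime above $r$) --- one re-runs the classical large sieve and the amplification in the transposed setting, obtaining $Q^{1/r}M^{2(r-1)/r}$ and, through the error terms of the reciprocity/Poisson step, the remaining pieces $M^{5/3}Q^{-1/3}$ and $Q$. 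Finally, whenever $q$ factors as $q=q_{1}q_{2}$ with $q_{1},q_{2}$ of size about $Q^{1/2}$, applying the full large sieve in the $\chi_{1}$-aspect (cost $\ll Q+M$) and bounding the number of order-$r$ characters $\chi_{2}$ trivially by $\ll Q^{1/2+\epsilon}$ contributes $Q^{3/2}+Q^{1/2}M$, the remaining $q$ (those without such a factorisation) being sparse enough to treat directly. Collecting these estimates and taking the best in each regime gives $\Delta(Q,M)$.

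The main obstacle is the transposed estimate. The flip of $\chi_{q}(m)$ is clean for $r=2$ (quadratic reciprocity) but delicate for general $r$: one must work over $\ZZ[\zeta_{r}]$, which need not be a principal ideal domain, keep track of units and of the prime above $r$, and then carry out the ensuing completion/Poisson argument and bound the off-diagonal, all with enough uniformity to recover the exponents appearing in \eqref{eq:lsconst}. Verifying that the four regimes exhaust all ranges of $Q$ and $M$, so that the $\min$ is legitimate, is the other bookkeeping point; the rest --- Cauchy--Schwarz, Möbius inversion, divisor-sum estimates and the $\epsilon$-management around the amplifier --- is routine.
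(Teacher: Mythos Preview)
Your outline has the right flavour (classical large sieve, duality, and a lift to $\ZZ[\zeta_r]$), but the amplification step you describe cannot recover the bounds in \eqref{eq:lsconst}. Using $\chi(m)=\chi(mt^r)$ ties the amplifier to the order $r$: your new variable has length $MT^r$ and optimising $T$ gives $Q^{2(r-1)/r}M^{1/r}$ (and, after transposition, $Q^{1/r}M^{2(r-1)/r}$). These exponents depend on $r$, whereas the theorem claims the $r$-independent quantities $Q^{3/2}+Q^{1/2}M$ and $Q^{2/3}M+Q^{4/3}$; for $r\ge 5$ your bounds are strictly weaker in the relevant ranges and never collapse to those shapes. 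The paper obtains the second and third terms by a different device: after passing via Proposition~\ref{correspondence} to power residue symbols over $\ZZ[\zeta_r]$ and the Hecke-family framework of Fisher--Friedberg, it applies H\"older with exponent $\nu=2,3$ (independent of $r$) to the dual norm, i.e.\ $|\sum_m a_m\chi(m)|^{\nu}$ becomes a sum over $m\sim M^{\nu}$, and then feeds in the number-field large sieve bound $M+Q^2$. This is how one gets $Q^{1-1/\nu}M+Q^{1+1/\nu}$ with $\nu$ decoupled from $r$.

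The fourth term is also not an ``error from a Poisson/reciprocity step'' as you suggest. In the paper it comes from a bootstrap: one imports the recursive inequalities of Blomer--Goldmakher--Louvel for Hecke families (their Lemmas 3.1--3.3), which compare a smoothed norm $B_3(Q,M)$ to $\max_{K\ll M^2/Q}B_2(K,M)$, and then inserts the already-established bound $Q^{2/3}M+Q^{4/3}$ to obtain $Q+Q^{1/3}M^{4/3}+M^{5/3}Q^{-1/3}$. Your sketch has no analogue of this recursion. Likewise, your factorisation argument for $Q^{3/2}+Q^{1/2}M$ (split $q=q_1q_2$ with $q_i\sim Q^{1/2}$, large sieve in $\chi_1$, trivial in $\chi_2$) is not what the paper does and, as written, does not handle the positive-density set of $q$ with a prime factor exceeding $Q^{1/2}$; in the paper this bound is simply the $\nu=2$ case of the H\"older step. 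In short, the missing idea is to work over $\ZZ[\zeta_r]$ with the Hecke-family large sieve and amplify by H\"older in an exponent independent of $r$, rather than by the $t\mapsto t^r$ trick.
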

\begin{remark}
This result is likely not the best possible using our techniques. Further improvement will be the subject of future work.
\end{remark}
The cases $r=2,3,$ and 4 in Theorem \ref{largesieve} have been studied by \cite{HB2}, \cite{BY} and \cite{GZ}, respectively. 
In all cases, this is produced from a related large sieve result for the $r$-th power residue symbol.
We remark that in these previous works the field $\QQ(\zeta_r)$, over which this symbol is defined, was either $\QQ$ itself or an imaginary quadratic field of class number 1, thus greatly simplifying the analysis. In order to deal with the added complexities, we use the theory of sums over Hecke families as introduced in \cite{GL} and \cite{BGL}. The authors believe there is a great deal of untapped potential in applying this theory to similar character sum problems and we hope to inspire future research in this direction.

One could, however, replace our application of the large sieve by following more closely the work of Zhou \cite{zhou}. Specifically instead of using Theorem \ref{largesieve} to bound the quantity $\Psi_2$ in Section 6 of our paper, one could appeal to bounds on the Dedekind zeta function of the field $\QQ(\zeta_r)$. However since we believe that Theorem \ref{largesieve} could have numerous applications to similar problems on the average representation by Kummer polynomials of integers of arithmetic interest,  we have choosen to more closely work in the spirit of \cite{HB2}.
Moreover, the quadratic, cubic and quartic large sieve has already found extensive application to the study of the $L$-functions of such characters (e.g.\ \cite{GZ2} and \cite{GZ3}). We give below a quick example of one such application for our new sieve.
\begin{theorem} \label{application}
For $Q \geq 1$, we have
\[
\sum_{q \leq Q}\, \sumstar_{\substack{ \chi \bmod q \\ \chi^r = \chi_0}}
\vert L(1/2 + it, \chi) \vert^2 \ll_{\epsilon} Q^{\frac{7}{6}+\epsilon}(1 + \vert t \vert)^{{\frac{1}{2}+\epsilon}} +  Q^{\frac{4}{3}+\epsilon}.
\]
\end{theorem}

Finally, let us note how following the approach of \cite{SS} would not be tractable for the present problem. Using the dispersion method, we would have to open the square in the left-hand side of \eqref{ABH} and bound a sum of the form
\[
\sum_{k \leq y} \left[ \Lambda(n_1^r + k)\Lambda(n_2^r + k)
+
\Lambda(n_1^r + k)\mathfrak{S}_{n_0, M_0}(k)x
+
\Lambda(n_2^r + k)\mathfrak{S}_{n_0, M_0}(k)x
+
\mathfrak{S}_{n_0, m_0}(k)^2x^2\right].
\]
The most difficult term would then be the first: indeed, re-arranging slightly we could write the $k$-sum as
\[
\sum_{a \leq y+n_1^r} \Lambda(a) \Lambda(a+(n_2^r - n_1^r)).
\] 
Achieving an asymptotic formula for such a quantity is on the level of the Twin Prime Conjecture, and therefore unattainable with current techniques.

The paper is laid out as follows.
In Section \ref{sec:lemmas}, we gather several well-known lemmas that are necessary in the course of our proof. Section \ref{sec:sieve} is devoted to the proof of the large sieve result (cf. Theorem \ref{largesieve}). We prove our application of the large sieve, Theorem \ref{application}, in Section \ref{sec:interlude}. Theorem \ref{thm: average} is then proven in Sections \ref{section: major arc} - \ref{sec:together}. Finally, we make the application to rational and integral points in Section \ref{sec:chats}.

\subsection*{Acknowledgements} The authors are grateful to Tim Browning and Efthymios Sofos for useful conversations, and to Jean-Louis Colliot-Th{\'e}l{\`e}ne for his interest in our work. They are also indebted to the anonymous referees and Ben Green for the useful feedback and for pointing out some oversights in previous versions of this work. During part of this work, Francesca Balestrieri was supported by the European Union's Horizon 2020 research and innovation programme under the Marie Sklodowska-Curie grant 840684. During part of this work, Nick Rome was supported by EPSRC Studentship EP/N509619/1 179379.

\section{Preliminaries}\label{sec:lemmas}
For the reader's convenience, we collect here some results that we are going to use throughout the paper.
\begin{lemma}[P\'olya-Vinogradov {\cite[Theorem 12.5]{IK}}] \label{lem: polya-vinogradov}
Let $M,N,q \geq 1$.
For any non-principal character $\chi$ modulo $q$, we have
$$
\left | \sum_{M < n \le M+N} \chi(n) \right | \ll q^{1/2} \log q.
$$
\end{lemma}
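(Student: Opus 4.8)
The plan is to run the classical Fourier-analytic argument via Gauss sums. First I would reduce to the case where $\chi$ is primitive: writing $\chi^{\star}$ for the primitive character of conductor $f\mid q$ that induces $\chi$ (so $f>1$, since $\chi$ is non-principal), one has $\sum_{M<n\le M+N}\chi(n)=\sum_{M<n\le M+N,\ \gcd(n,q)=1}\chi^{\star}(n)$, and removing the coprimality condition by M\"obius inversion over the squarefree divisors of the radical of $q/f$ expresses this as a short combination of incomplete sums of $\chi^{\star}$. Bounding each of those by the shape $f^{1/2}\log f$ and using the elementary estimate $2^{\omega(m)}\ll m^{1/2}$ for the number of terms still yields $\ll q^{1/2}\log q$, so it suffices to treat primitive $\chi$ modulo $q>1$.

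For primitive $\chi$ I would use the Gauss-sum identity
\[
\chi(n)=\frac{1}{\tau(\bar\chi)}\sum_{0\le a<q}\bar\chi(a)\,e\!\left(\frac{an}{q}\right),\qquad |\tau(\bar\chi)|=q^{1/2},
\]
the second statement being the crucial input. Substituting this into $\sum_{M<n\le M+N}\chi(n)$ and interchanging the order of summation turns the problem into bounding $q^{-1/2}\sum_{0\le a<q}\bar\chi(a)\sum_{M<n\le M+N}e(an/q)$. The inner sum is a geometric progression, hence $\ll\min\{N,\ \|a/q\|^{-1}\}$ with $\|\cdot\|$ the distance to the nearest integer, and the term $a=0$ drops out because $\bar\chi(0)=0$.

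It then remains to estimate $q^{-1/2}\sum_{1\le a\le q-1}\|a/q\|^{-1}$; pairing $a$ with $q-a$ and using $\|a/q\|^{-1}\le q/a$ for $1\le a\le q/2$, this is $\ll q^{-1/2}\cdot q\sum_{1\le a\le q/2}a^{-1}\ll q^{1/2}\log q$, as required. I do not expect a genuine obstacle here, the inequality being classical: the single ingredient that carries the whole argument is the square-root cancellation $|\tau(\bar\chi)|=q^{1/2}$ in the Gauss sum, which upgrades the trivial bound $N$ to $q^{1/2}\log q$, and the only slightly technical point is the bookkeeping in the reduction to primitive characters. In the text we simply invoke \cite[Theorem 12.5]{IK}.
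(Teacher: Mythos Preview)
The paper does not supply a proof of this lemma at all: it is listed among the preliminary results in Section~\ref{sec:lemmas} with only the citation to \cite[Theorem~12.5]{IK}. Your sketch is the standard Gauss-sum argument that appears in that reference, and it is correct; you even note yourself that in the text one simply invokes the citation, which is exactly what the paper does.
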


\begin{lemma}[Number field large sieve{\cite[Theorem 1]{H}}] \label{lem: large sieve number field}
Let $K$ be a number field and $\mathfrak{r}$ denote an ideal in $K$. Suppose $u(\mathfrak{r})$ is a complex-valued function defined on the set of ideals in $K$. We have
$$
\sum_{\mathcal{N}(\mathfrak{f}) \le Q} \frac{\mathcal{N}(f)}{\Phi(\mathfrak{f})} \sumstar_{\chi  \Mod \mathfrak{f}} \left | \sum_{\mathcal{N}(\mathfrak{r}) \le z} u(\mathfrak{r}) \chi(\mathfrak{r}) \right |^2 \ll (z + Q^2) \sum_{\mathcal{N}(\mathfrak{r}) \le z} |u(\mathfrak{r})|^2,
$$
where $\mathcal{N}(\mathfrak{f})$ denotes the norm of the ideal $\mathfrak{f}$, $\Phi(\mathfrak{f})$ is Euler's totient function generalized to the setting of number fields, the $*$ over the summation over $\chi$ indicates that $\chi$ is a primitive character of narrow ideal class group modulo $\mathfrak{f}$ and the implicit constant depends on $K$.
\end{lemma}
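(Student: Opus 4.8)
The plan to reprove Lemma~\ref{lem: large sieve number field} from scratch is to adapt to $\mathcal{O}_K$ the Gauss-sum reduction that underpins the classical large sieve over $\ZZ$. Write $n=[K:\QQ]$ and embed $K\hookrightarrow K_\infty:=\RR^{r_1}\times\CC^{r_2}\cong\RR^{n}$ through the archimedean places. First I would attach to each primitive narrow ray-class character $\chi$ of conductor $\mathfrak{f}$ a Gauss sum $\tau(\chi)$ assembled from the additive characters $x\mapsto e(\mathrm{Tr}_{K/\QQ}(\xi x))$, where $\xi$ runs over a set of $\mathcal{N}(\mathfrak{f})$ representatives attached to $(\mathfrak{f}\mathfrak{d})^{-1}/\mathfrak{d}^{-1}$ and $\mathfrak{d}$ is the different; the standard orthogonality computation then yields $|\tau(\chi)|^{2}=\mathcal{N}(\mathfrak{f})$ for primitive $\chi$. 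Multiplying the inner sum by $\tau(\bar\chi)$, expanding $\chi(\mathfrak{r})\tau(\bar\chi)$ into those additive characters, completing the resulting sum of nonnegative terms over all characters $\chi \Mod \mathfrak{f}$, and invoking orthogonality, the weight $\mathcal{N}(\mathfrak{f})/\Phi(\mathfrak{f})$ exactly cancels the factor $\Phi(\mathfrak{f})/\mathcal{N}(\mathfrak{f})$ so produced, and one is left to bound
\[
\sum_{\mathcal{N}(\mathfrak{f})\le Q}\ \sum_{\xi}\ \Bigl|\sum_{\mathcal{N}(\mathfrak{r})\le z} u(\mathfrak{r})\, e\bigl(\mathrm{Tr}_{K/\QQ}(\xi\,\omega_{\mathfrak{r}})\bigr)\Bigr|^{2},
\]
where $\omega_{\mathfrak{r}}$ encodes the ideal $\mathfrak{r}$ via its idelic (equivalently, Minkowski) data; since non-principal ideals have no canonical generator, this step is carried out most cleanly with Hecke's idelic Gauss sums.

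The two remaining ingredients are geometric. Next I would prove a \emph{spacing lemma}: under the embedding into $K_\infty$ the phases $\xi$ above lie in the torus $K_\infty/\mathcal{O}_K^{\vee}$, and two of them coming from conductors $\mathfrak{f},\mathfrak{f}'$ either coincide or are separated by $\gg\mathcal{N}(\mathfrak{f}\mathfrak{f}')^{-1/n}\gg Q^{-2/n}$ in the natural metric, the implied constant depending only on $K$. Indeed, clearing denominators from two too-close phases produces a nonzero element of a fractional ideal of norm $\gg\mathcal{N}(\mathfrak{f}\mathfrak{f}')^{-1}$, which by the AM--GM inequality applied to its conjugates has Euclidean length $\gg\mathcal{N}(\mathfrak{f}\mathfrak{f}')^{-1/n}$; this is the arithmetic core, amounting to the fact that $|\mathcal{N}_{K/\QQ}(\beta)|\ge 1$ forbids a nonzero algebraic integer $\beta$ from having all conjugates small. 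Then I would feed this into the analytic large sieve in $n$ variables: after using Dirichlet's unit theorem to normalise the $\omega_{\mathfrak{r}}$ so that they fill a box of side $\asymp z^{1/n}$ in $K_\infty$, and since the phases are $\delta$-separated with $\delta\asymp Q^{-2/n}$, the Montgomery--Vaughan/Selberg majorant method applied coordinatewise (or, as no sharp constant is needed, Gallagher's embedding inequality) yields the factor $(z^{1/n}+\delta^{-1})^{n}\asymp z+Q^{2}$, which is the asserted bound.

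The hard part will be the spacing lemma and the bookkeeping around it: one must contend with the absence of canonical generators for non-principal $\mathfrak{r}$ (hence the detour through ideles, or through the narrow ray class group with its archimedean sign data), control the action of the totally positive units on the representatives $\omega_{\mathfrak{r}}$ so that the size-normalisation above is legitimate, and---above all---establish the separation estimate with an implied constant uniform in $\mathfrak{f}$, $z$ and $Q$ and depending only on $K$. Once this is in place with the correct exponent $2/n$ in $Q$, the remaining analytic input is the standard multidimensional large sieve and is essentially formal.
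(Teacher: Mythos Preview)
The paper does not prove this lemma at all; it is quoted as \cite[Theorem~1]{H} (Huxley) in the preliminaries section and used as a black box throughout. Your sketch---reduce via Hecke Gauss sums to additive characters indexed by points of $K_\infty/\mathcal{O}_K^{\vee}$, prove a spacing lemma giving separation $\gg_K Q^{-2/n}$, normalise ideal representatives by units so they sit in a box of side $\asymp z^{1/n}$, and then apply the $n$-dimensional analytic large sieve so that $(z^{1/n}+Q^{2/n})^n\ll_n z+Q^2$ by AM--GM on the cross terms---is precisely Huxley's original strategy, and your identification of the spacing lemma and the unit/class-group bookkeeping as the genuinely non-formal steps is accurate.
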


\begin{lemma}[Duality principle {\cite[Theorem 228]{HLP}}]  \label{lem: duality principle}
For a finite square matrix $(t_{mn})$  with entries in the complex numbers, the following statements are equivalent:
\begin{enumerate}
\item  For any complex sequence $(a_n)$, we have
$$
\sum_{m} \left | \sum_{n} a_n t_{mn} \right |^2 \ll \sum_{n} |a_n|^2.
$$
\item For any complex sequence $(b_n)$, we have
$$
\sum_{n} \left | \sum_{m   } b_m t_{mn} \right |^2 \ll \sum_{m} |b_m|^2.
$$
\end{enumerate}
\end{lemma}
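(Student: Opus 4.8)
The plan is to recognise both displayed inequalities as bounds on the operator norm of the linear map $T\colon \mathbb{C}^N \to \mathbb{C}^N$ with matrix $(t_{mn})$, and to deduce their equivalence by an explicit Cauchy--Schwarz duality argument that also preserves the implied constant. It suffices to prove that if (1) holds with implied constant $D$, meaning $\sum_m |\sum_n a_n t_{mn}|^2 \le D \sum_n |a_n|^2$ for every finite complex sequence $(a_n)$, then (2) holds with the same constant $D$; the converse then follows by applying this implication to the transposed matrix $(t_{nm})$, since (2) for $(t_{mn})$ is literally (1) for $(t_{nm})$ after interchanging the roles of the two indices.

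For the implication (1)$\Rightarrow$(2), I would fix a finite sequence $(b_m)$, set $c_n := \sum_m b_m t_{mn}$ and $S := \sum_n |c_n|^2$, and assume $S>0$ (the case $S=0$ being trivial). Expanding one factor in $|c_n|^2$ and interchanging the finite summations gives
\[
S \;=\; \sum_n \overline{c_n}\, \sum_m b_m t_{mn} \;=\; \sum_m b_m \Bigl( \sum_n \overline{c_n}\, t_{mn} \Bigr),
\]
and the Cauchy--Schwarz inequality in the variable $m$ yields
\[
S \;\le\; \Bigl( \sum_m |b_m|^2 \Bigr)^{1/2} \Bigl( \sum_m \Bigl| \sum_n \overline{c_n}\, t_{mn} \Bigr|^2 \Bigr)^{1/2}.
\]
Applying hypothesis (1) to the sequence $a_n := \overline{c_n}$ bounds the second factor by $\bigl( D \sum_n |c_n|^2 \bigr)^{1/2} = (DS)^{1/2}$, so $S \le (DS)^{1/2}\bigl(\sum_m |b_m|^2\bigr)^{1/2}$; dividing through by $S^{1/2}>0$ and squaring gives $S \le D\sum_m |b_m|^2$, which is exactly (2) with implied constant $D$.

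No genuine obstacle arises; the only points needing attention are the degenerate case $S=0$, the placement of complex conjugates so that Cauchy--Schwarz is applied to the right pairing, and the fact that hypothesis (1) is invoked for the auxiliary sequence $(\overline{c_n})$ rather than for $(b_m)$ itself --- legitimate precisely because (1) is assumed for all sequences. Alternatively, one could invoke the spectral theorem, noting $\|T\|_{\mathrm{op}}^2 = \|T^*T\|_{\mathrm{op}} = \|TT^*\|_{\mathrm{op}} = \|T^*\|_{\mathrm{op}}^2$, but the elementary argument above has the advantage, crucial for the applications in Section~\ref{sec:sieve}, of tracking the same implied constant on both sides.
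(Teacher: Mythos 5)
Your argument is correct and complete: the Cauchy--Schwarz step applied to the auxiliary sequence $(\overline{c_n})$, together with the transposition trick for the converse, is exactly the standard proof of the duality principle, and it even preserves the implied constant. The paper itself does not prove this lemma but simply cites Hardy--Littlewood--P\'olya (Theorem 228), where the same bilinear-form duality argument is used, so your proposal matches the intended justification.
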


\begin{lemma}[Perron formula \cite{D}] \label{lem: perron}
Suppose that $y \neq 1$ is a positive real number. Then, for $c,T>0$, we have
$$
\frac{1}{2 \pi i} \int_{c-iT}^{c+iT} \frac{y^s}{s} \  \mathrm{d} s=
\begin{cases}
1 + O \left (y^{c} \min \{1,T^{-1} |\log y|^{-1} \} \right ) &\mbox{if $y>1$,} \\
O \left  (y^{c} \min \{1,T^{-1} |\log y|^{-1} \} \right )  & \mbox{otherwise.}
\end{cases}
$$
\end{lemma}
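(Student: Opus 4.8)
The plan is the classical contour-shift proof of the truncated Perron formula, based on the fact that $s\mapsto\tfrac{y^s}{s}$ is meromorphic with a single simple pole, at $s=0$, of residue $y^0=1$.

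Suppose first that $y>1$, so that $|y^s|=y^{\Re s}$ is small when $\Re s$ is very negative. I would integrate $\tfrac{1}{2\pi i}\tfrac{y^s}{s}$ around the boundary of the rectangle with vertices $c\pm iT$ and $-U\pm iT$, oriented counterclockwise, and then let $U\to\infty$. By the residue theorem the closed-contour integral equals $1$, so the segment from $c-iT$ to $c+iT$ equals $1$ minus the contributions of the remaining three sides. On the far-left side $\Re s=-U$ one has $\bigl|\tfrac{y^s}{s}\bigr|\le y^{-U}/U$, contributing $O(Ty^{-U}/U)\to0$. On each horizontal side $\Im s=\pm T$ one has $\bigl|\tfrac{y^s}{s}\bigr|\le y^{\sigma}/T$ with $\sigma=\Re s$, so that side contributes $O\!\bigl(\tfrac1T\int_{-\infty}^{c}y^{\sigma}\,d\sigma\bigr)=O\!\bigl(\tfrac{y^c}{T\log y}\bigr)=O\!\bigl(\tfrac{y^c}{T|\log y|}\bigr)$. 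This yields $\tfrac{1}{2\pi i}\int_{c-iT}^{c+iT}\tfrac{y^s}{s}\,ds=1+O\!\bigl(\tfrac{y^c}{T|\log y|}\bigr)$. For $0<y<1$ the identical computation applies with the rectangle opened to the right instead (vertices $c\pm iT$ and $U\pm iT$, $U\to+\infty$): no pole is enclosed, the new vertical side still drops out in the limit, and the two horizontal sides again contribute $O\!\bigl(\tfrac{y^c}{T|\log y|}\bigr)$, so one obtains the same bound with main term $0$.

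It remains to sharpen the error term to $O\!\bigl(y^c\min\{1,(T|\log y|)^{-1}\}\bigr)$, i.e.\ to supply the complementary bound $O(y^c)$ in the regime $T|\log y|\le1$, where the contour shift above is lossy. For this I would estimate the vertical segment directly: writing $s=c+it$ and substituting $u=t\log y$ (which compresses the range of integration to length $2T|\log y|\le2$) reduces the matter to showing that $\int_{-a}^{a}\tfrac{e^{iu}}{c\log y+iu}\,du$ is $O(1)$ uniformly, where $a=T|\log y|\le1$. When $|c\log y|\ge1$ the integrand is bounded by $|c\log y|^{-1}\le1$; when $|c\log y|<1$ one splits into real and imaginary parts, the imaginary-part integrand being odd and so integrating to $0$, and uses $\cos u=1+O(u^2)$, $\sin u=O(u)$ to reduce to $\bigl|\int_{-a}^{a}\tfrac{c\log y}{(c\log y)^2+u^2}\,du\bigr|\le\pi$ together with trivially bounded remainders. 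The mild cancellation near $u=0$ is exactly what prevents a spurious logarithm, and $c\log y\neq0$ precisely because $y\neq1$ is excluded, so the value at $y=1$ never enters. I expect the only even mildly delicate point to be the uniformity of this last estimate across all $c,T,y$ with $T|\log y|\le1$ (in particular as $y\to1$); everything else is entirely routine, and one may alternatively just quote the standard treatment in \cite{D}.
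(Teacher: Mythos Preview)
Your argument is correct. The paper does not supply its own proof of this lemma: it is stated in the preliminaries section with a citation to Davenport~\cite{D} and used as a black box, so there is nothing in the paper to compare against beyond the cited source.

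Your contour-shift computation for the $O\bigl(y^c/(T|\log y|)\bigr)$ term is exactly the standard one. For the complementary $O(y^c)$ bound your direct estimate via the substitution $u=t\log y$ works, but it is more laborious than the argument in Davenport, which instead closes the vertical segment $[c-iT,c+iT]$ with the arc of the circle $|s|=\sqrt{c^2+T^2}$ lying to the left (for $y>1$) or to the right (for $0<y<1$) of the line $\Re s=c$; on that arc one has $|y^s|\le y^c$ and $|s|=\sqrt{c^2+T^2}$, and its length is at most $\pi\sqrt{c^2+T^2}$, so the arc contributes at most $y^c/2$ and one is done in a line. Your approach has the minor virtue of being a direct bound on the original integral rather than another contour deformation, but the circle-arc version avoids the case analysis and the near-singular estimate at $u=0$. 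One small point you left implicit: for $y>1$ you bound $|I|$ rather than $|I-1|$, but since $y^c>1$ this is harmless.
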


\begin{lemma}[Weyl bound {\cite[Proposition 8.2]{IK}}] \label{lem: weyl}
If $f(x)= \alpha x^d+ \ldots + a_0$ is a polynomial with real coefficients  and $d \ge 1$, then
$$
\left | \sum_{n \le N}e(f(n)) \right | \le 2 N \left  \{N^{-d} \sum_{-N < \ell_1, \ldots, \ell_{d-1} <N} \min \left ( N , \frac{1}{\left \lVert \alpha d! \prod_{i=1}^{d-1} \ell_i \right \rVert} \right) \right \}^{2^{1-d}}.
$$
Here $\lVert x \rVert$ is the distance of $x$ to the nearest integer .
\end{lemma}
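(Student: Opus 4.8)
The plan is to run the classical Weyl differencing argument, inducting on the degree $d$. It is cleanest to prove the slightly more flexible statement in which the initial segment $\{n \le N\}$ is replaced by an arbitrary interval $I$ containing at most $N$ integers --- the right-hand side depends only on $N$ and on the leading coefficient $\alpha$, not on $I$ itself --- since the sums produced by the differencing step are indexed by sub-intervals, not by initial segments. For the base case $d = 1$, write $f(x) = \alpha x + a_0$; then $\sum_{n \in I} e(f(n))$ is a geometric progression, so its modulus is at most $\min\!\big(|I|,\,(2\|\alpha\|)^{-1}\big) \le \min\!\big(N,\,\|\alpha\|^{-1}\big)$ (with the usual convention when $\alpha \in \Z$). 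Since the right-hand side of the claimed bound for $d = 1$ equals $2\min(N,\|\alpha\|^{-1})$ (the product over the empty index set being $1$), the base case is immediate.

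For the inductive step I would assume the result in degree $d - 1$, set $S := \sum_{n \in I} e(f(n))$, and square to get
\[
|S|^2 = \sum_{n_1, n_2 \in I} e\big(f(n_1) - f(n_2)\big) = \sum_{|\ell_1| < N}\ \sum_{n \in I_{\ell_1}} e\big(f(n+\ell_1) - f(n)\big),
\]
where $I_{\ell_1} := I \cap (I - \ell_1)$ also contains at most $N$ integers. The key point is that $g_{\ell_1}(n) := f(n+\ell_1) - f(n)$ has real coefficients, degree $d - 1$, and leading coefficient exactly $d\alpha\ell_1$, so the inductive hypothesis applies to each inner sum. Invoking it, using $(d\alpha\ell_1)\,(d-1)! = \alpha\,d!\,\ell_1$, treating the $\ell_1 = 0$ term with the convention $1/\|0\| = +\infty$, and summing over $\ell_1$ via the triangle inequality, one arrives at
\[
|S|^2 \le 2N \sum_{|\ell_1| < N}\Big\{ N^{-(d-1)} \!\!\sum_{-N < \ell_2, \dots, \ell_{d-1} < N}\!\! \min\!\Big(N,\ \tfrac{1}{\|\alpha\,d!\,\prod_{i=1}^{d-1}\ell_i\|}\Big)\Big\}^{2^{2-d}}.
\]
I would then apply Hölder's inequality in the shape $\sum_j a_j^{\theta} \le \big(\sum_j 1\big)^{1-\theta}\big(\sum_j a_j\big)^{\theta}$, valid for $\theta = 2^{2-d} \in (0,1]$, noting $\sum_{|\ell_1|<N} 1 \le 2N$, in order to pull the $\ell_1$-summation inside the braces; writing $\Sigma := \sum_{-N < \ell_1, \dots, \ell_{d-1} < N} \min\!\big(N,\ \|\alpha\,d!\,\prod_{i=1}^{d-1}\ell_i\|^{-1}\big)$ for the resulting full multiple sum and collecting powers of $2$ and $N$, this yields
\[
|S|^2 \le 2N\,(2N)^{1-2^{2-d}}\big(N^{-(d-1)}\Sigma\big)^{2^{2-d}} = 2^{\,2-2^{2-d}}\,N^{\,2-d\,2^{2-d}}\,\Sigma^{2^{2-d}} \le 4N^2\big(N^{-d}\Sigma\big)^{2^{2-d}},
\]
the last step using $2^{2-d} > 0$. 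Taking square roots gives precisely the asserted inequality (with a constant even better than $2$, which we simply discard).

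There is no genuine obstacle here: the whole argument is the textbook Weyl differencing process, and one could alternatively just cite \cite[Proposition 8.2]{IK}. The only steps that require care are bookkeeping ones --- confirming that the first difference has leading coefficient exactly $d\alpha\ell_1$, so that the factor $\alpha\,d!$ emerges after $d - 1$ iterations; checking the exponent identities $2\cdot 2^{1-d} = 2^{2-d}$ and $2 - 2^{2-d} - (d-1)2^{2-d} = 2 - d\,2^{2-d}$ used in the Hölder step; and remembering to set the induction up for the interval version of the statement rather than for the initial-segment version as written.
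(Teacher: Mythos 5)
Your proposal is correct: it is the standard Weyl differencing induction (with the interval refinement and the H\"older step handled accurately, including the exponent bookkeeping $2\cdot 2^{1-d}=2^{2-d}$ and the $\ell_1=0$ term), which is precisely the argument behind the cited result --- the paper itself offers no proof, quoting \cite[Proposition 8.2]{IK} verbatim. The only cosmetic caveat is that the count $\#\{\ell_1:|\ell_1|<N\}\le 2N$ tacitly uses that $N$ is a positive integer, which is the usual convention here.
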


\begin{lemma} [Mikawa ~\cite{M}]\label{lem: mikawa}
Let
$$
\mathfrak{J}(q,\Delta) = \sum_{\chi  \bmod q} \int_{N}^{2N} \left |  \sum_{t < n < t+q\Delta}^{\#} \Lambda(n) \chi(n) \right |^2 \mathrm{d} t
$$
where the $\#$ over the summation symbol means that if $\chi=\chi_0$, then $\chi(n) \Lambda(n)$ is replaced by $\Lambda(n)-1$. Let $\varepsilon,A,B>0$ be given. If $q \le (\log N)^B$ and $N^{1/5+\varepsilon} < \Delta < N^{1-\varepsilon}$, then we have
$$
\mathfrak{J}(q,\Delta) \ll_{\varepsilon,A,B} (q\Delta)^2 N(\log N)^{-A}.
$$
\end{lemma}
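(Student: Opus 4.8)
The plan is to express $\mathfrak{J}(q,\Delta)$ as a mean square over the non‑trivial zeros of Dirichlet $L$-functions via the explicit formula, and then to feed in a zero‑free region together with a zero‑density estimate; write $h:=q\Delta$, so that $N^{1/5+\varepsilon}<h<N^{1-\varepsilon/2}$ for $N$ large. First I would carry out the standard reductions: each $\chi\bmod q$ is induced by a primitive character $\chi^{*}\bmod{q^{*}}$ with $q^{*}\mid q$, and the differences $\sum_{t<n<t+h}(\Lambda(n)\chi(n)-\Lambda(n)\chi^{*}(n))$ (supported on prime powers dividing $q$), the contribution of proper prime powers, and, when $\chi=\chi_{0}$, the terms with $\gcd(n,q)>1$, are small enough that, after squaring, integrating over $t\in[N,2N]$ and summing over the at most $q\leq(\log N)^{B}$ characters mod $q$, they contribute $\ll h^{2}N(\log N)^{-A}$ (using $h\geq N^{1/5+\varepsilon}$). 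Thus it suffices to bound
\[
\sum_{q^{*}\mid q}\sumstar_{\chi\bmod{q^{*}}}\int_{N}^{2N}\bigl|\,\psi^{\#}(t+h,\chi)-\psi^{\#}(t,\chi)\,\bigr|^{2}\,\mathrm{d}t ,
\]
where $\psi^{\#}(x,\chi)=\sum_{n\leq x}\Lambda(n)\chi(n)$ for $\chi\neq\chi_{0}$ and $\psi^{\#}(x,\chi_{0})=\psi(x)-x$ (so that for $\chi_{0}$ one works with the zeros of $\zeta$).

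Next I would insert, for a parameter $2\leq T\leq N$, the truncated explicit formula $\psi^{\#}(x,\chi)=-\sum_{|\gamma|\leq T}x^{\rho}/\rho+O(N(\log qN)^{2}/T)$, the sum being over non‑trivial zeros $\rho=\beta+i\gamma$ of $L(s,\chi)$. Writing $F_{\rho}(t):=\int_{t}^{t+h}u^{\rho-1}\,\mathrm{d}u$, this gives
\[
\psi^{\#}(t+h,\chi)-\psi^{\#}(t,\chi)=-\sum_{|\gamma|\leq T}F_{\rho}(t)+O\!\left(\frac{N(\log qN)^{2}}{T}\right),
\]
and the $O$-term, squared and integrated, contributes $\ll qN^{3}(\log N)^{4}/T^{2}$ in all, which is admissible once $T$ is taken a little above $\sqrt{q}\,N(\log N)^{(A+4)/2}/h$; since $h\geq N^{1/5+\varepsilon}$ and $q\leq(\log N)^{B}$ one may take $T\leq N^{4/5-\varepsilon/2}$. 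It then remains to bound
\[
\mathcal{Z}:=\sum_{q^{*}\mid q}\sumstar_{\chi\bmod{q^{*}}}\sum_{|\gamma_{1}|\leq T}\sum_{|\gamma_{2}|\leq T}\int_{N}^{2N}F_{\rho_{1}}(t)\,\overline{F_{\rho_{2}}(t)}\,\mathrm{d}t ,
\]
and for the inner integral I would combine the trivial bound $|F_{\rho}(t)|\ll N^{\beta-1}\min(h,N/|\gamma|)$ with the oscillation in the $t$-variable (after writing the product of integrals as a triple integral, the relevant phase being $\gamma_{1}\log u-\gamma_{2}\log v$), which yields a mean‑value estimate of the shape
\[
\left|\int_{N}^{2N}F_{\rho_{1}}(t)\,\overline{F_{\rho_{2}}(t)}\,\mathrm{d}t\right|\ll \frac{N^{\beta_{1}+\beta_{2}-1}\min(h,N/|\gamma_{1}|)\min(h,N/|\gamma_{2}|)}{1+|\gamma_{1}-\gamma_{2}|}
\]
(or a sharper variant of it; the ranges with $|\gamma_{i}|>N/h$ being treated separately and more easily).

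Decomposing $\mathcal{Z}$ dyadically in $|\gamma_{1}|$, $|\gamma_{2}|$ and $|\gamma_{1}-\gamma_{2}|$, one is left with sums of zeros weighted by $N^{\beta_{1}+\beta_{2}-1}$, which I would bound using three standard inputs: (i) the classical zero‑free region $\beta\leq1-c/\log(q(|\gamma|+2))$, which already gives $\beta\leq1-c'/\log\log N$ for bounded $|\gamma|$ (since $q\leq(\log N)^{B}$), together with the Vinogradov--Korobov zero‑free region for zeros of large imaginary part, so that $N^{\beta-1}$ is smaller than any fixed power of $\log N$ uniformly for $|\gamma|\leq T$; (ii) Siegel's theorem for the possible exceptional real zero $\beta_{1}$, giving $N^{\beta_{1}-1}\ll_{\varepsilon}\exp(-(\log N)^{1-B\varepsilon})$, which accounts for the ineffective dependence of the implied constant on $\varepsilon,A,B$; and (iii) a zero‑density estimate $\sumstar_{\chi\bmod{q^{*}}}\mathcal{N}_{\chi}(\sigma,U)\ll(q^{*}U)^{\theta(1-\sigma)}(\log q^{*}U)^{c}$ of Huxley--Ingham type (with $q^{*}\leq(\log N)^{B}$ negligible, this is in effect the estimate for $\zeta$). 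Substituting these bounds and summing over the dyadic ranges would give $\mathcal{Z}\ll h^{2}N(\log N)^{-A}$, whence $\mathfrak{J}(q,\Delta)\ll_{\varepsilon,A,B}(q\Delta)^{2}N(\log N)^{-A}$.

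The hard part is the combination in this last step. Neither ingredient suffices alone: there are up to $T\asymp N^{4/5}$ zeros, so the zero‑free region cannot absorb them all, while the density estimate cannot make $N^{\beta-1}$ small for zeros lying within $O(1/\log N)$ of the line $\Re s=1$; one must therefore use the density estimate to dispose of zeros with $\beta$ bounded away from $1$ (where $N^{\beta-1}$ is only a small power of $N$ but there can be many such zeros) and the zero‑free region to dispose of the few zeros very near the line. It is precisely here that the hypothesis $\Delta>N^{1/5+\varepsilon}$ enters: it forces the truncation height down to $T<N^{4/5-\varepsilon/2}$, which is exactly where the available density exponent $\theta$ is large enough for the two estimates to overlap with room to spare. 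The upper bound $\Delta<N^{1-\varepsilon}$ is needed so that the short interval lies inside $[N/2,3N]$, making the explicit‑formula manipulations and the stationary‑phase estimate for the inner integral valid; finally, one should take care that the mean‑value estimate of the second paragraph, and the pair‑correlation‑free bookkeeping of the zero sums, are carried out uniformly over the whole admissible range of $\Delta$.
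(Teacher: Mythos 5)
The paper itself offers no proof of this lemma: it is imported verbatim from Mikawa~\cite{M}, so the only ``paper proof'' to compare against is the citation. Your sketch is, in substance, the standard zero-density proof of such mean-value theorems for $\Lambda$ twisted by characters of small modulus in short intervals (and Mikawa's own argument runs along these classical lines): reduce to primitive characters, insert the truncated explicit formula at height $T$ slightly above $N/h$ with $h=q\Delta$, bound the resulting double sum over zeros by a Gallagher/Saffari--Vaughan type mean-value estimate, and then combine a Vinogradov--Korobov zero-free region (with Siegel's theorem for a possible exceptional real zero, which is indeed the source of the ineffective dependence on $\varepsilon,A,B$) with a log-power zero-density estimate; the identification of where $\Delta>N^{1/5+\varepsilon}$ enters (forcing $T\ll N^{4/5-\varepsilon/2}$) is correct. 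Two points need more care than your write-up gives them. First, the density input must be quantitatively specific: at $T\approx N^{4/5}$ one needs an exponent $\theta<5/2$ in $\sum_{\chi}N_\chi(\sigma,T)\ll (qT)^{\theta(1-\sigma)}(\log qT)^{c}$, so ``Huxley--Ingham type'' is too loose -- Ingham's $\theta=3$ fails here (it would only reach $\Delta>N^{1/3+\varepsilon}$), and it is Huxley's $\theta=12/5$ (or a refinement) that makes $1/5$, indeed even $1/6$, admissible. Second, your pair-correlation bound carrying both factors $\min(h,N/|\gamma_i|)$ together with the denominator $1+|\gamma_1-\gamma_2|$ is awkward to justify by stationary phase; the standard and simpler route is Cauchy--Schwarz in the inner integral, $\bigl|\int_t^{t+h}\sum_\rho u^{\rho-1}\,\mathrm{d}u\bigr|^2\le h\int_t^{t+h}\bigl|\sum_\rho u^{\rho-1}\bigr|^2\mathrm{d}u$, followed by swapping integrals, which yields $h^2\sum_{\rho_1,\rho_2}N^{\beta_1+\beta_2-1}(1+|\gamma_1-\gamma_2|)^{-1}$ and is all that the argument requires. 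With those two repairs your outline closes; as far as this paper is concerned, though, the lemma is simply quoted from Mikawa rather than reproved.
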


\begin{lemma}[{Gallagher~\cite[Lemma 1]{G}}] \label{lem: gallagher}
Let $2 < \Delta < N/2$ and $N < N' < 2N$. For arbitrary complex sequence $(a_n)_{n \in \mathbb{N}}$, we have
$$
\int_{|\beta|< \Delta^{-1}} \left | \sum_{N < n < N'} a_n e(\beta n) \right |^2 d\beta \ll \Delta^{-2} \int_{N-\Delta/2}^N \left | \sum_{\max \{t,N \} < n < \min \{t+\Delta/2,N' \}} a_n \right |^2 \ \mathrm{d} t. 
$$
\end{lemma}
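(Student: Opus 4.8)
The plan is to reduce the inequality to a soft Fourier-analytic estimate and then invoke Parseval's identity. Put $h = \Delta/2$, so that the interval of integration $|\beta| < \Delta^{-1}$ is precisely $|\beta| < 1/(2h)$, and abbreviate $S(\beta) := \sum_{N < n < N'} a_n e(\beta n)$. The key auxiliary object is the moving block sum
\[
F(t) \;:=\; \sum_{\max\{t,N\} < n < \min\{t+h,\, N'\}} a_n \;=\; \sum_{N < n < N'} a_n\, \mathbbm{1}[\,t < n < t+h\,].
\]
I would show that, up to an explicit multiplicative kernel that is bounded away from $0$ on $|\beta| < 1/(2h)$, the Fourier transform of $F$ coincides with $S$; the claimed bound then drops out of Parseval's identity applied to $F$.

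The first step is to compute $\widehat{F}(\beta) := \int_{\mathbb{R}} F(t)\, e(-\beta t)\, \mathrm{d}t$. Since $t < n < t+h$ is equivalent to $n - h < t < n$, interchanging the finite sum over $n$ with the integral gives
\[
\widehat{F}(\beta) \;=\; \sum_{N < n < N'} a_n \int_{n-h}^{n} e(-\beta t)\, \mathrm{d}t \;=\; \frac{e(\beta h) - 1}{2\pi i\, \beta}\, S(-\beta) \;=\; w(\beta)\, S(-\beta),
\]
where $w(\beta) := (e(\beta h) - 1)/(2\pi i\, \beta)$. The second step is to bound $w$ from below on the range of integration: one has $|w(\beta)| = |\sin(\pi\beta h)|/(\pi|\beta|)$, and applying the elementary inequality $\sin x \geq (2/\pi)x$ on $[0,\pi/2]$ with $x = \pi|\beta| h$ (which lies in $[0,\pi/2]$ precisely because $|\beta| < 1/(2h)$) gives $|w(\beta)| \geq 2h/\pi$ throughout $|\beta| < \Delta^{-1}$. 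Hence $|S(-\beta)| \leq (\pi/2h)\, |\widehat{F}(\beta)|$ on that range.

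The third step is to square, integrate over $|\beta| < \Delta^{-1}$, use the symmetry of this range about the origin, enlarge the domain of integration to all of $\mathbb{R}$, and apply Parseval:
\[
\int_{|\beta| < \Delta^{-1}} |S(\beta)|^2\, \mathrm{d}\beta \;\leq\; \frac{\pi^2}{4h^2} \int_{\mathbb{R}} |\widehat{F}(\beta)|^2\, \mathrm{d}\beta \;=\; \frac{\pi^2}{4h^2} \int_{\mathbb{R}} |F(t)|^2\, \mathrm{d}t \;\ll\; \Delta^{-2} \int_{\mathbb{R}} |F(t)|^2\, \mathrm{d}t,
\]
the middle equality being Parseval's identity and the last step using $h = \Delta/2$. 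Finally, $F(t)$ vanishes unless the interval $(t,t+h)$ meets $(N,N')$, that is, unless $t \in (N - \Delta/2,\, N')$, so the integral on the right may be restricted to this bounded interval; this is the asserted inequality.

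The argument is entirely soft, so I do not expect a genuine obstacle. The one point needing care is the lower bound for the kernel $w$, and in particular the observation that the block length $h = \Delta/2$ and the radius of integration $\Delta^{-1}$ are matched exactly so that $w$ has no zero on $|\beta| < \Delta^{-1}$; one should also record the routine fact that $F \in L^1 \cap L^2(\mathbb{R})$ (clear, since $F$ is bounded and compactly supported), which is all that is needed to justify the manipulations with $\widehat{F}$ above. No arithmetic input enters at any stage.
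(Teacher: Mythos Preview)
Your argument is correct and is precisely Gallagher's original proof: write the moving block sum $F$, compute its Fourier transform as $w(\beta)S(-\beta)$, bound the kernel $w$ below on $|\beta|<\Delta^{-1}$ via $\sin x \geq (2/\pi)x$, and apply Plancherel. The paper does not supply its own proof of this lemma --- it is quoted from \cite[Lemma~1]{G} as a preliminary --- so there is nothing to compare against beyond the original source, with which your proof agrees.

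One incidental remark: you correctly identify the support of $F$ as $(N-\Delta/2,\,N')$, whereas the upper limit of integration in the paper's displayed statement reads $N$. This is a typo in the paper; its own application of the lemma in Section~\ref{section: error in major arc} uses the upper limit $2M'$, i.e.\ the analogue of $N'$, consistent with your computation.
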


\begin{lemma}[{Bessel \cite[Thm 1, $\S$47]{H}}] \label{lem: bessel}
Let $\phi_1, \phi_2 , \ldots, \phi_R$ be orthonormal members of an inner product space $V$ over $\mathbb{C}$ with inner product $(\cdot, \cdot)$ and let $\xi \in V$. Then
$$
\sum_{r=1}^{R} |(\xi, \phi_r)|^2 \le (\xi, \xi).
$$
\end{lemma}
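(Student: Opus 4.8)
The plan is to give the standard orthogonal-projection argument; there is nothing deep here, and indeed one could simply defer to \cite{H} as the statement already does. First I would form the ``Fourier partial sum'' $\eta := \sum_{r=1}^{R} (\xi, \phi_r)\,\phi_r \in V$, which is the orthogonal projection of $\xi$ onto $\mathrm{span}\{\phi_1, \dots, \phi_R\}$. Using the orthonormality relations $(\phi_i, \phi_j) = \delta_{ij}$ together with the (conjugate-)bilinearity of the inner product, a direct expansion gives $(\eta, \eta) = \sum_{i,j} (\xi,\phi_i)\overline{(\xi,\phi_j)}(\phi_i,\phi_j) = \sum_{r=1}^{R} |(\xi,\phi_r)|^2$, and similarly $(\xi, \eta) = \sum_{r=1}^{R} \overline{(\xi,\phi_r)}(\xi,\phi_r) = \sum_{r=1}^{R} |(\xi,\phi_r)|^2$ and $(\eta, \xi) = \sum_{r=1}^{R} (\xi,\phi_r)\overline{(\xi,\phi_r)} = \sum_{r=1}^{R} |(\xi,\phi_r)|^2$. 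Thus all three cross/diagonal inner products equal the quantity $S := \sum_{r=1}^{R} |(\xi,\phi_r)|^2$ that we wish to bound.

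Next I would invoke the positive semi-definiteness of the inner product, applied to the residual vector $\xi - \eta$: one has $0 \le (\xi - \eta, \xi - \eta) = (\xi,\xi) - (\xi,\eta) - (\eta,\xi) + (\eta,\eta) = (\xi,\xi) - S$, and rearranging yields $S \le (\xi,\xi)$, which is exactly the claimed inequality. The only point worth a word of care — and it is the ``main obstacle'' only in the most pedantic sense — is the bookkeeping of which slot of $(\cdot,\cdot)$ is conjugate-linear, since this dictates whether the coefficients in the definition of $\eta$ are $(\xi,\phi_r)$ or $\overline{(\xi,\phi_r)}$; but as only the moduli $|(\xi,\phi_r)|^2$ survive the expansion, the conclusion is insensitive to this convention. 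No completeness or finite-dimensionality hypothesis on $V$ is needed, and the argument is uniform in $R$.
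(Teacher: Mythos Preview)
Your argument is correct and is exactly the standard proof of Bessel's inequality. The paper itself gives no proof of this lemma, simply citing it from \cite{H} as one of several preliminary results collected without argument in Section~\ref{sec:lemmas}; your write-up is precisely the textbook derivation one would find in that reference.
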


\begin{lemma}[{\cite[Theorem 5.35]{IK}}] \label{lem: zero free region}
Let $K /\QQ$ be a number field, $\xi$ a Hecke Grossencharakter modulo $(\mathfrak{m}, \Omega)$ where $\mathfrak{m}$ is a non-zero integral ideal in $K$ and $\Omega$ is a set of real infinite places where $\xi$ is ramified. Let the conductor $\Delta = |d_K|N_{K/\QQ} \mathfrak{m}$. There exists an absolute effective constant $c' >0$ such that the L-function $L(\xi, s)$ of degree $d=[K: \QQ]$ has at most a simple real zero in the region
$$
\sigma > 1-\frac{c'}{d \log \Delta (|t| +3)}.
$$
The exceptional zero can occur only for a real character and it is strictly less than $1$.
\end{lemma}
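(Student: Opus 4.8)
This is the classical de la Vallée Poussin zero-free region transported to Hecke $L$-functions, and the plan is to run the standard argument. The engine is the elementary inequality $3+4\cos\theta+\cos 2\theta = 2(1+\cos\theta)^2 \ge 0$. Since $\zeta_K(s)$, $L(s,\xi)$ and $L(s,\xi^2)$ all admit Euler products over the prime ideals of $K$, applying this inequality at each prime power to the logarithmic derivative of $\zeta_K(s)^3 L(s,\xi)^4 L(s,\xi^2)$ would give, for every real $\sigma>1$ and every $t\in\RR$,
\[
-3\frac{\zeta_K'}{\zeta_K}(\sigma) - 4\,\mathrm{Re}\,\frac{L'}{L}(\sigma+it,\xi) - \mathrm{Re}\,\frac{L'}{L}(\sigma+2it,\xi^2) \ge 0.
\]

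Next I would feed in the Hadamard factorisation of the completed Hecke $L$-function — legitimate because $L(s,\xi)$ satisfies a functional equation (Hecke, or Tate's thesis) — to obtain a partial-fraction estimate of the shape
\[
-\mathrm{Re}\,\frac{L'}{L}(s,\xi) = -\sum_{\rho}\mathrm{Re}\,\frac{1}{s-\rho} + O\big(\log\mathfrak q(\xi,t)\big), \qquad \mathfrak q(\xi,t) = \Delta(|t|+3)^d,
\]
valid for $s=\sigma+it$ with $1<\sigma<2$, the sum running over non-trivial zeros $\rho$. The crucial point is that $\mathrm{Re}\,\frac{1}{s-\rho}>0$ for such $s$, so every zero may be discarded except one that is singled out; for $\zeta_K$ the simple pole at $s=1$ gives instead $-\zeta_K'/\zeta_K(\sigma) \le \frac{1}{\sigma-1}+O(\log|d_K|)$. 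Given then a zero $\rho_0 = \beta_0+i\gamma_0$ of $L(s,\xi)$, I would put $t=\gamma_0$, take $\sigma = 1+\lambda/(d\log\mathfrak q)$ with $\lambda$ a small absolute constant, keep only $\rho_0$ on the $L(\cdot,\xi)$ side, and — assuming $\xi^2$ is non-principal, so $L(s,\xi^2)$ is holomorphic near $s=1$ and hence $-\mathrm{Re}\,\frac{L'}{L}(\sigma+2i\gamma_0,\xi^2) = O(\log\mathfrak q)$ — deduce
\[
\frac{4}{\sigma-\beta_0} \le \frac{3}{\sigma-1} + O(\log\mathfrak q);
\]
optimising $\lambda$ then forces $\beta_0 \le 1 - c'/\big(d\log\Delta(|\gamma_0|+3)\big)$, as claimed.

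The hard part is precisely the situation that produces the exceptional zero in the statement: when $\xi$ is a real (quadratic ray-class) character, so $\xi^2 = \chi_0$, the $L$-function $L(s,\xi^2)$ has a simple pole at $s=1$, and for $\gamma_0$ near $0$ the term $-\mathrm{Re}\,\frac{L'}{L}(\sigma+2i\gamma_0,\xi^2)$ carries an additional $\mathrm{Re}\,\frac{1}{\sigma-1+2i\gamma_0} \asymp \frac{1}{\sigma-1}$, which cancels the saving and breaks the $3$-$4$-$1$ argument. The remedy is to handle $|\gamma_0|$ bounded away from $0$ by the argument above (there the offending term is $O(1)$), and for $|\gamma_0|$ small to argue instead with $\zeta_K(s)L(s,\xi)^2$ near $s=1$: positivity of its logarithmic derivative rules out any genuinely complex zero close to $1$ and shows that at most one real zero $\beta_0<1$ can survive, which is forced to be simple since a double real zero would again contradict the inequality. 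Splicing the two ranges of $\gamma_0$ yields the stated zero-free region, which is the content of \cite[Theorem 5.35]{IK}.
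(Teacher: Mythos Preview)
Your sketch is a faithful outline of the classical de la Vall\'ee Poussin argument for Hecke $L$-functions, and it is essentially what one finds in Iwaniec--Kowalski. However, note that the paper does not actually prove this lemma: it is one of several preliminary results in Section~\ref{sec:lemmas} that are simply quoted from the literature, in this case with a direct citation to \cite[Theorem~5.35]{IK}. So there is no ``paper's own proof'' to compare against --- the authors are content to invoke the standard reference, and your proposal goes well beyond what the paper itself supplies.
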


\begin{lemma}[Inverse Mellin transform{\cite[Lemma 12]{HB2}}]\label{mellin}
Let $\rho:\RR \rightarrow \RR$ an infinitely differentiable function whose derivatives satisfy \[ \frac{\mathrm{d}^k}{\mathrm{d}x^k} \rho(x) \ll_A \vert x \vert^{-A}.\]
Let \[ \rho_+(s)  = \int_0^{\infty} \rho(x) x^{s-1} \mathrm{d}x\]
and
 \[ \rho_-(s)  = \int_0^{\infty} \rho(-x) x^{s-1} \mathrm{d}x.\]
Then $\rho_+(s)$ and $\rho_-(s)$ are holomorphic for $\Re(s)=\sigma > 0$ and satisfy $\rho_{\pm}(s)  \ll_{A,\sigma}  \vert s \vert^{-A}$. Moreover, for $\sigma > 0$, we have
\[
\rho(\pm x)  = \frac{1}{2 \pi i} \int_{\sigma-i\infty}^{\sigma+i\infty} \rho_{\pm}(s) x^{-s} \mathrm{d}s.
\]
\end{lemma}

\begin{lemma}[Hua's lemma with congruences]\label{hua}
For $\alpha \in (0,1)$, let $T(\alpha) = \sum \limits_{\substack{ n \leq x \\ n \equiv a \bmod{q}}} e(\alpha n^k)$. Then, for any $\epsilon>0$,
\[
\int_0^1 \vert T(\alpha) \vert^{2^k} \mathrm{d}\alpha \ll \left(\frac{x}{q}\right)^{2^k-k+\epsilon}.
\]
\end{lemma}
\begin{proof}
The proof is very similar to that of e.g. \cite[Lemma 3.2]{DB}.
\end{proof}

\section{The Large Sieve}\label{sec:sieve}
Let $r \geq 2$ be a fixed integer. Throughout this section let $k$ be a number field containing all the $r$-th roots of unity and $\mathcal{O}_k$ the ring of integers of $k$. 
Given an integral ideal $\mathfrak{c}$, we will denote by $I(\mathfrak{c})$ and $I^*(\mathfrak{c})$, the set of integral ideals and fractional ideals of $\mathcal{O}_k$, respectively, which are coprime to $\mathfrak{c}$. We will define $I(S)$ and $I^*(S)$ analogously, for $S$ a finite set of places of $k$.

 For $a \in k$, let $S_a$ be the set of places of $k$ which either divide $r$ or ramify in $k(a^{1/r})/k$. 
At each prime $\fp \in I^*(\mathfrak{c})$, we have the Frobenius automorphism $F_a(\fp)$. We extend this multiplicatively to all fractional ideals to get the Artin map $F_a: I^*(S_a) \rightarrow \Gal(k(a^{1/r})/k)$.
For any $\fp \in I^*(\mathfrak{c})$, we have
\[
F_a(\fp)(a^{1/r}) = \legendre{a}{\fp} a^{1/r},
\]
where $\legendre{a}{\fp}$ is some $r$-th root of unity. The symbol $\legendre{a}{\fp}$ is independent of the choice of $a^{1/r}$ in this construction and is defined to be the \emph{$r$-th power residue symbol}. Indeed, \[ \legendre{a}{\fp} = 1 \iff a \text{ is an $r$-th power in } k_{\fp}.\]
Note that we may extend this multiplicatively to a symbol $\chi_a(\fb) = \legendre{a}{\fb}$ for any $\fb \in I^*(S_a)$.

One of the troubles that we run into if we try to extend the techniques in the papers  \cite{HB2}, \cite{BY} and \cite{GZ} mentioned in the introduction is that, in those papers, the authors work with power residue symbols $\chi_a(\fb)$ taking one integer argument and one ideal argument. In order to prove our large sieve result for any $r \geq 5$, we require a generalisation of the power residue symbol that takes two ideals as arguments. To achieve this, we need to introduce the notion of a Hecke family of characters.
\begin{definition} Let $r \geq 2$ be an integer. Let $k$ be a number field containing the group $\mu_r$ of all the $r$-th roots of unity. Fix an ideal $\mathfrak{c}$ of $\mathcal{O}_k$.  An \emph{$r$-th order Hecke family (with respect to the ideal $\mathfrak{c}$)} is a collection \[  \{ \chi_{\fa} : \fa \in I(\mathfrak{c}), \mu^2(\fa) =1 \}\]
of primitive Hecke characters of trivial infinity type satisfying the following three properties:
\begin{enumerate}
\item The order of each character $\chi_{\fa}$ divides $r$.
\item There exists a finite group $G$, a homomorphism $[\cdot ]$ from $I(\mathfrak{c})$ to $G$, and a map $C: G \times G \rightarrow \mu_n$ such that \[ \chi_{\fa}(\fb) = \chi_{\fb}(\fa) C([\fa],[\fb])\] for all coprime ideals $\fa, \fb \in I(\mathfrak{c})$. Note that we can think of $C([\fa],[\fb])$ as a sort of reciprocity law factor.
\item For all coprime ideals $\fa, \fb \in I(\mathfrak{c})$ satisfying $[\fa]=[\fb]$, we have that $\chi_{\fa}\overline{\chi_{\fb}}$ is a primitive Hecke character modulo $\fa\fb$.
\end{enumerate}
\end{definition}

In \cite{FF}, Fisher and Friedberg construct, for each ideal $\fa \in I(S)$, a Hecke character $\chi_{\fa}$ that generalises the power residue symbol $\left(\frac{a}{\cdot}\right)$ in the sense that, for $a \equiv1 \Mod \mathfrak{c}$, we have $\chi_{(a)}=\chi_a$. One of the major ingredients in the proof of Theorem \ref{largesieve} is the following correspondence between Dirichlet characters of order exactly $r$ and $r$-th power residue symbols.

\begin{proposition}\label{correspondence}
There is a one-to-one correspondence between the set of all primitive Dirichlet characters of exact order $r$ and conductor $p$, where $p$ splits completely in the ring of integers of the $r$-th cyclotomic field $\Q(\zeta_r)$ and the set of all $r$-th power residue symbols $\chi_{\fp}$ at prime ideals $\fp$ in $\Z[\zeta_r]$ which lie above $p$.
Moreover, there are no primitive Dirichlet characters of exact order $r$ and conductor $p^\alpha$ for $\alpha \geq 2$.

By multiplicativity, one can extend the correspondence to squarefree conductors of the form $q = \prod_{i=1}^s p_i$, where the primes $p_i$ all split completely over $\Q(\zeta_r)$.
\end{proposition}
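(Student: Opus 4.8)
The plan is to realise both sides of the correspondence as torsors under $\Gal(\QQ(\zeta_r)/\QQ)\cong(\ZZ/r\ZZ)^\times$ and then to exhibit an explicit equivariant map between them; since an equivariant map between two torsors under the same group is automatically a bijection, this gives the correspondence with essentially no computation. First I would record the relevant counting. Because $p$ splits completely in $\ZZ[\zeta_r]$ we have $p\equiv 1\pmod r$, and in particular $p\neq r$ (the only prime ramifying in $\QQ(\zeta_r)$); moreover $(p)=\fp_1\cdots\fp_{r-1}$ with each residue field $\ZZ[\zeta_r]/\fp_i\cong\FF_p$, and since $\QQ(\zeta_r)/\QQ$ is Galois of degree $r-1$ while the decomposition group at each $\fp_i$ is trivial, $\Gal(\QQ(\zeta_r)/\QQ)$ permutes $\{\fp_1,\dots,\fp_{r-1}\}$ simply transitively. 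On the Dirichlet side, the characters of $(\ZZ/p\ZZ)^\times$ of order dividing $r$ form a cyclic group of order $\gcd(r,p-1)=r$, so there are exactly $r-1$ of exact order $r$ (using that $r$ is prime), and $(\ZZ/r\ZZ)^\times$ acts on these simply transitively by $c\cdot\chi:=\chi^{c}$. Thus both sets have $r-1$ elements and are $(\ZZ/r\ZZ)^\times$-torsors.

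Next I would define the map and verify equivariance. To a prime $\fp\mid p$ I attach the Dirichlet character $\psi_{\fp}$ modulo $p$ defined by $\psi_{\fp}(n)=\legendre{n}{\fp}$ for $\gcd(n,p)=1$, i.e.\ the unique element of $\mu_r\subset\QQ(\zeta_r)$ reducing to $n^{(p-1)/r}$ modulo $\fp$; this is the restriction to rational integers of the $r$-th power residue symbol $\chi_{\fp}$ in the statement, and it corresponds to $\chi_{\fp}$ under the standard dictionary between Dirichlet and Hecke characters (compatibly with the Fisher--Friedberg normalisation $\chi_{(a)}=\chi_a$). One checks $\psi_{\fp}$ has exact order $r$: it is non-trivial since $n\mapsto n^{(p-1)/r}$ is a non-trivial character of $\FF_p^\times$, its order divides $r$, and $r$ is prime; being non-trivial modulo the prime $p$, it is primitive of conductor exactly $p$. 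The crucial point is $\Gal$-equivariance: for $\sigma\in\Gal(\QQ(\zeta_r)/\QQ)$ with $\sigma(\zeta_r)=\zeta_r^{c}$, applying $\sigma$ to the congruence $\legendre{n}{\fp}\equiv n^{(p-1)/r}\pmod{\fp}$ and using that $\sigma$ fixes the integer $n^{(p-1)/r}$ yields $\sigma(\legendre{n}{\fp})\equiv n^{(p-1)/r}\pmod{\sigma\fp}$, hence, by uniqueness, $\psi_{\sigma\fp}=\sigma\circ\psi_{\fp}=\psi_{\fp}^{\,c}$. Therefore $\fp\mapsto\psi_{\fp}$ is an equivariant map between two $(\ZZ/r\ZZ)^\times$-torsors, hence a bijection; this is the asserted one-to-one correspondence.

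For the ``moreover'' clause I would note that for any prime $p\neq r$ (in particular any $p$ splitting completely in $\ZZ[\zeta_r]$) the group $(\ZZ/p^{\alpha}\ZZ)^\times$ is cyclic of order $p^{\alpha-1}(p-1)$ with $r\nmid p^{\alpha-1}$, so any character of order $r$ satisfies $r\mid p-1$ and factors through $(\ZZ/p\ZZ)^\times$; being non-trivial, its conductor is exactly $p$, never $p^{\alpha}$ with $\alpha\geq 2$. For the extension to a squarefree conductor $q=p_1\cdots p_s$, I would factor a primitive character of exact order $r$ modulo $q$ uniquely as $\prod_i\chi_i$ with $\chi_i$ primitive modulo $p_i$; primitivity forces each $\chi_i\neq\chi_0$, and since $\ord(\chi)=\mathrm{lcm}_i\,\ord(\chi_i)=r$ with $r$ prime, each $\chi_i$ has exact order $r$, so each $p_i$ splits completely by the conductor-$p$ case, and multiplying the associated power residue symbols $\chi_{\fp_i}$ gives the correspondence for $q$. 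I do not expect a genuine obstacle; the points requiring care are that $\legendre{n}{\fp}$ is well-defined independently of the chosen $r$-th root (as recorded in the setup), that the $\Gal$-action on $\{\fp\mid p\}$ really is simply transitive (which is where ``splits completely'' is used, though for $p\neq r$ this is equivalent to $p\equiv1\pmod r$), and the small bookkeeping observation that the conductor of an order-$r$ character cannot be divisible by $r$, since there is no order-$r$ character modulo $r$ --- which is exactly what makes the clean statement true for squarefree $q$.
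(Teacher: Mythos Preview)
Your proof is correct and takes a genuinely different route from the paper's. The paper argues by explicit counting: writing every Dirichlet character modulo $p$ in the form $n\mapsto e(m\nu(n)/(p-1))$ via a fixed primitive root, it shows directly that exactly $\varphi(r)$ of these have exact order $r$, then observes that the $\varphi(r)$ maps $m\mapsto\chi_{\fp_i}((m))$ are distinct characters of this type and hence exhaust the list; the $\alpha\geq 2$ case is handled by the same primitive-root bookkeeping, showing the character factors through modulus $p$. Your argument instead packages both sides as $(\ZZ/r\ZZ)^\times$-torsors and checks that $\fp\mapsto\psi_{\fp}$ is equivariant, so that bijectivity is automatic.

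What each approach buys: the paper's argument is entirely elementary and avoids any appeal to Galois theory or the decomposition of primes, at the cost of a somewhat fiddly index computation. Your torsor argument is cleaner and makes the $\Gal(\QQ(\zeta_r)/\QQ)$-equivariance of the correspondence explicit, which is structurally informative; it also gives injectivity and surjectivity in one stroke rather than via a separate distinctness check. Two small remarks: you invoke ``$r$ is prime'' to pass from ``order dividing $r$ and non-trivial'' to ``exact order $r$'', whereas the paper's Section~3 is set up for general $r\geq 2$ --- your argument adapts immediately (the characters of exact order $r$ are still a $(\ZZ/r\ZZ)^\times$-torsor, and there are $\varphi(r)$ primes above $p$), but it is worth flagging. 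Second, your treatment of the $\alpha\geq 2$ clause via the cyclic structure of $(\ZZ/p^{\alpha}\ZZ)^\times$ tacitly uses $p$ odd; this is harmless here since $p\equiv 1\pmod r$ forces $p$ odd once $r\geq 3$, and the $r=2$ case is classical, but you may want to say so.
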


\begin{proof}
To classify all primitive Dirichlet characters of order $r$ and conductor $q$, we first note that it suffices, by multiplicativity, to consider $q = p^\alpha$ where $p$ is a rational prime and where $\alpha \geq 1$. We start with the case $\alpha=1$.
If there are primitive characters of modulus $p$ and order $r$, then it must be the case that $r \mid p-1$ and therefore that $p$ splits completely in $\Q(\zeta_r)$. Conversely, if $p$ splits completely in $\Q(\zeta_r)$ (and thus $p \equiv 1 \ (\textup{mod } r)$), say as $p \mathcal{O}_{\Q(\zeta_r)} = \fp_1 \cdots \fp_{\varphi(r)}$, then associated to each $\fp_i$ there is a map $m \mapsto \chi_{\fp_i}((m))$, which is a Dirichlet character of order $r$ and modulus $N(\fp_i) = p$. Furthermore, these maps represent all such characters by the following simple counting argument. For any Dirichlet character $\chi$ of modulus $p^{\alpha}$, we have (see ~\cite[Chapter 4]{D})
\[ \chi(n) = e^{2\pi i m \nu(n)/\varphi(p^{\alpha})},\]
where $m$ is some fixed integer and $\nu(n)$ is the index of $n$ relative to a particular primitive root of $p$; that is, if $g$ is a fixed primitive element modulo $p^\alpha$ (and hence, even for a general $\alpha$, a primitive element modulo $p$), then $\nu(n)$ is defined (up to adding multiples of the order of $g$ in $(\mathbb{Z}/p^{\alpha} \mathbb{Z})^\times$) by $g^{\nu(n)} \equiv n \ (\textup{mod } p^\alpha)$.  In our case, since $\alpha = 1$, we have $\varphi(p^\alpha) = p-1$.

Since $\chi$ has order $r$, we have, for all $n$, that
\[ 1 = \chi^r(n) = e^{2\pi i m r  \nu(n)/(p-1)}.\]
Hence, for all $n$, we need $m r  \nu(n)/(p-1) \in \mathbb{Z}$. Note that $\nu(g) = 1$ (and $\gcd(g,p)=1$), so in fact we need $m r /(p-1) \in \mathbb{Z}$, i.e. $m = ((p-1)/r)  w$, for some $w \in \mathbb{Z}$. Hence, 
\[ \chi(n) = e^{2\pi i m \nu(n)/p-1} =  e^{2\pi i w \nu(n)/r}.\]
Now, from the above it is easy to see that, if $w_1, w_2 \in \mathbb{Z}$ and $w_1 \equiv w_2  \ (\textup{mod } r)$, then $e^{2\pi i w_1 \nu(n)/r} = e^{2\pi i w_2 \nu(n)/r}$. Since all the characters of modulus $p$ and of order dividing $r$ are produced by varying $w$, we just need to consider $w \in \{0, 1, ..., r-1\}$;  hence, there can be at most $r$ Dirichlet characters of modulus $p$ and of order dividing $r$. Moreover, for $\chi$ to be of order exactly $r$, we need $w$ to be coprime to $r$. Hence, we get at most $\varphi(r)$ distinct characters of modulus $p$ and of order exactly $r$. Since the maps $m \mapsto \chi_{\fp_i}((m))$  already give $\phi(r)$ distinct Dirichlet characters of modulus $p$ and of order  $r$, it follows that these maps represent, in fact, all the (primitive) Dirichlet characters of modulus $p$ and of order $r$.

If $q = p^\alpha$ for $\alpha \geq 2$, then we claim that there can be no primitive characters of modulus $q$ and of exact order $r$, as they are always induced by characters of modulus $p$. Indeed, as we have seen, for a Dirichlet character $\chi$ of modulus $p^\alpha$ we have
\[ \chi(n) = e^{2\pi i m \nu(n)/\phi(p^\alpha)} =  e^{2\pi i m \nu(n)/(p^{\alpha-1}(p-1))},\]
where $m$ is some fixed integer and $\nu(n)$ is the index of $n$ relative to a particular primitive root $g$ of $p^\alpha$.
But since the order of $\chi$ is $r$ and $p-1 = r u$ for some integer $u$, we have, for all $n$, that
\[ 1 = \chi^r(n) = e^{2\pi i m   \nu(n)/(p^{\alpha-1} u)}.\]
So we need $m \nu(n)/(p^{\alpha-1}u) \in \mathbb{Z}$ for all $n$. Since  $\nu(g) = 1$, and $(g,p) = 1$,  it follows that $p^{\alpha-1} | m$, say $m = m' p^{\alpha-1} $. Hence,
\[  \chi(n) = e^{2\pi i m \nu(n)/\phi(p^\alpha)} =  e^{2\pi i m' \nu(n)/(p-1)}.\]
The right-hand side now can be seen as a Dirichlet character of modulus $p$ and order $r$: note that $\nu(n)$ is also the index of $n$  relative to the fixed primitive root $g$ when seen as a  primitive element of $p$ (i.e. $g^{\nu(n)} = n   \ (\textup{mod } p)$).  This shows that $\chi$ is induced by a (primitive) Dirichlet character of order $r$ and modulus $p$. 
\end{proof}

\begin{remark}
The correspondence between primitive Dirichlet characters of order $r$ modulo squarefree $q$ and products of $r^\text{th}$ power residue symbols breaks down for composite $r$. Instead the Dirichlet characters of order $r$ and modulus $q$ which correspond to $r^{\mathrm{th}}$ power residue symbols are those given by a product of characters of order $r$ for each prime $p$ dividing $q$. In the case that $r$ is prime, this is all Dirichlet characters mod $q$ of order $r$ however when $r$ is composite this criterion is more selective. Indeed suppose $p_1$ and $p_2$ are two distinct primes and let $\chi_1 \Mod p_1$ and $\chi_2 \Mod p_2$ be two Dirichlet characters of order 4 and 2, respectively. Their product $\chi_1\chi_2$ is a Dirichlet character of order 4 modulo $p_1p_2$ but it does not correspond to a quartic residue symbol. In order to extend the large sieve to composite $r$ a new idea is necessary.

Composite order large sieves have been claimed in \cite{GZ} and \cite{BY} however the authors believe the statements should be amended. The proofs as written only handle primitive characters $\chi$ of order 4, and 6 respectively, such that $\chi^2$, and $\chi^3$ in the latter, is also primitive.
\end{remark}

\subsection{Proof of Theorem \ref{largesieve}}
We start by introducing several norms associated to the sum which we aim to estimate. The comparison and estimation of these norms will ultimately yield our desired bound.
Firstly, let 
\[
B_1(Q,M) := \sup_{(a_m) \not \equiv 0} \vert \vert a_m \vert \vert^{-2} \sumprime_{N(\fa) \sim Q} \mu^2(\fa) 
\left\vert \sum_{m \sim M} \mu^2(m) a_m  \chi_{(m)}(\fa) \right\vert^2,
\]
where we have used the convention that $m \sim M$ means a sum over the range $M < m \leq 2M$, $\sumprime$ means a sum over all those ideals whose prime ideal divisors lie above completely split rational primes, and $\vert \vert a_m \vert \vert^2 := \sum_{m \sim M} \vert a_m \vert^2$. 
By the correspondence in Proposition \ref{correspondence}, every primitive Dirichlet character of modulus between $Q$ and $2Q$ of order $r$ is uniquely represented by a Hecke character $\chi_{(.)}(\fa)$ arising in the $\fa$ sum. Our bound \eqref{eq:sieve} will follow from a bound on $B_1(Q,M)$, since we can reduce any non-primitive character to the primitive character inducing it, and apply the bound to that.
We define the norm $B_2(Q,M)$ similarly, but loosening the restriction that primes dividing $\fa$ lie above completely split primes to just that $\fa$ must be coprime to $r \mathcal{O}_{\Q(\zeta_r)}$ and dropping the squarefreeness condition on $\fa$.
The bound
\[
B_1(Q,M) \leq B_2(Q,M) 
\]
is then trivial. Moreover, if $W$ is a smooth weight function with compact support in $(0, \infty)$ which is at least 1 on the interval $[1,2]$, then 
\[
B_2(Q,M) \leq \sup_{(a_m)} \vert \vert a_m \vert \vert^{-2} \sumstar_{m_1,m_2} a_{m_1} \overline{a_{m_2}} \sum_{\substack{\fa \subset \mathcal{O}_k\\(\fa, r \mathcal{O}_k)=1}} W\left(\frac{N(\fa)}{Q}\right) \chi_{(m_1)}(\fa) \overline{\chi_{(m_2)}(\fa)}.
\]
Introducing an additional coprimality condition, we define
\[
B_3(Q,M) 
:=
\sup_{(a_m)} \vert \vert a_m \vert \vert^{-2} \sumstar_{\gcd(m_1,m_2)=1} a_{m_1} \overline{a_{m_2}} \sum_{\substack{\fa \subset \mathcal{O}_k\\(\fa, r \mathcal{O}_k)=1}}  W\left(\frac{N(\fa)}{Q}\right) \chi_{(m_1)}(\fa) \overline{\chi_{(m_2)}(\fa)}.
\]
We define $C_1(M,Q)$ as the dual sum to $B_1$, namely
\[
C_1(M,Q) := 
\sup_{(b_{\fa}) \not \equiv 0} \vert \vert b_{\fa} \vert \vert^{-2} \sum_{m \sim M} \mu^2(m) 
\left \vert \sumprime_{N(\fa) \sim Q} \mu^2(\fa)b_{\fa} \chi_{\fa}((m)) \right \vert^2.
\]
Similarly, we define $C_2(M,Q)$ by removing from $C_1(M,Q)$ the restriction that $m$ be squarefree. 

A large sieve over Hecke families was established in \cite{GL} and \cite{BGL}. From the analysis in these papers,  we take the following bounds.
\begin{lemma}
For any $Q,M \geq 1$ and any $\epsilon>0$, we have
\begin{align}
B_1(Q_1,M) &\ll B_1(Q_2,M) \text{ for } Q_1, M \geq 1 \text{ and } Q_2 \geq Q_2 \geq CQ_1\log(2Q_1M) \label{eq:lem3}\\
B_2(Q,M) \ &\ll M^{\epsilon} B_3\left( \frac{Q}{\Delta_1}, \frac{M}{\Delta_2}\right) \text{ for some } 1\leq\Delta_1\ll\Delta_2 \label{eq:lem5}.
\end{align}
\end{lemma}

These bounds follow exactly the proofs of \cite[Lemma 3.1 and 3.2]{BGL} or \cite[Lemma 9 and Lemma 7]{HB2}.
The other bounds that we need are proved in the following lemma.

\begin{lemma}\label{lem:bounds}
For any $Q,M \geq 1$ and any $\epsilon>0$, we have
\begin{align}
C_2(M,Q) &\ll (QM)^{\epsilon}(M+ Q^{2}),\label{eq:lem1}\\
B_3(Q,M)\ &\ll Q + (QM)^{\epsilon}\frac{Q}{M^{\phi(r)}} \max \limits_{1 \leq K \leq (QM)^{\epsilon}M^{2\phi(r)} Q^{-1}} B_2(K,M) \nonumber\\
&\phantom{\ll Q }+\frac{M^{3\phi(r)}}{Q} \sum_{K >M^{2 \phi(r)}/Q} K^{-2}B_2(K,M),\label{eq:lem6}\\
B_2(Q,M) &\ll (\log 2Q)^3 Q^{1/2}X^{-1/2} B_1(XQ^{\epsilon},M) \text{ for some } X \text{ with } 1 \leq X \leq Q  \label{eq: lem4}\\
C_2(M,Q) &\ll M^{\epsilon}Q^{1-1/\nu} \sum_{j=0}^{\nu -1} C_2(2^jM^{\nu},Q)^{1/\nu} \label{eq:lem2}.
\end{align}
\end{lemma}

In order to establish these, it will be necessary to employ the Hecke family version of the Poisson summation formula \cite[Lemma 2.2]{BGL}. Write $\widehat{W}(s)$ for the Mellin transform of $W$ and $d_k$ for the discriminant of a number field $k$. Let
\[
\dot{W}(x) = \frac{1}{2\pi i} \int_{c-i\infty}^{c+i \infty} \widehat{W}(1-s) \left(\frac{(2\pi)^s \Gamma(s)}{\Gamma(1-s)}\right)^{d/2} \vert d_{\QQ(\zeta_r)} \vert^{-s/2} x^{-s} \mathrm{d}s,
\] for $c>0$.
\begin{lemma}[{\cite[Lemma 2.2]{BGL}}]\label{poisson}
Let $M>0$ and $\chi$ a primitive ray class character of conductor  $\mathfrak{f}$. Then we have
\[
\sum_{\fa} W\left(\frac{N(\fa)}{M}\right) \chi(\fa) = \frac{M\epsilon(\chi)}{\sqrt{N(\mathfrak{f})}} 
\sum_{\fa} \dot{W}\left(\frac{M N(\fa)}{N(\mathfrak{f})}\right) \overline{\chi}(\fa).
\]
\end{lemma}

\begin{proof}[Proof of Lemma \ref{lem:bounds}]
The first bound follows from a simple application of the usual large sieve in number fields (Lemma \ref{lem: large sieve number field}). 
Next we note that when $M<1$, the bound $B_3(Q,M) \ll Q$ is trivial.
We know that 
\[
B_3(Q,M)
\leq
\sumstar_{\gcd(m_1,m_2) =1 }
a_{m_1} \overline{a_{m_2}}
\sum_{\gcd(\fa, r \mathcal{O}_k)=1}
W \left(\frac{N(\fa)}{Q} \right) \chi_{(m_1)}(\fa) \overline{\chi_{(m_2)}(\fa)}.
\]
Using \cite[Lemma 2.3]{BGL} we can replace $\chi_{(m_1)} \overline{\chi_{(m_2)}}$ by the underlying primitive Hecke character and then apply the Hecke family version of Poisson summation \cite{BGL} to get
\[
B_3(Q,M)
\leq
Q
\sumstar_{\gcd(m_1,m_2) =1 }
\frac{a_{m_1} \overline{a_{m_2}}}{(m_1m_2)^{\phi(r)/2}}
\sum_{\gcd(\fa, r \mathcal{O}_k)=1}
\dot{W} \left(\frac{N(\fa)Q}{(m_1m_2)^{\phi(r)}} \right) \chi_{(m_1)}(\fa) \overline{\chi_{(m_2)}(\fa)}.
\]
We will split the $\fa$ sum into dyadic intervals and if the interval has length $K>M^{2\phi(r)}/Q$ then we can apply the fast decay of the Mellin transform of $W$ hence for any $A>0$ we have
\begin{align*}
B_3(Q,M) 
&\ll
\frac{Q}{M^{\phi(r)}} \sum_{\substack{K \leq M^{2\phi(r)}/Q\\ \text{dyadic}}} 
\sumstar_{\gcd(m_1,m_2) =1 }
a_{m_1} \overline{a_{m_2}}
\sum_{\gcd(\fa, r \mathcal{O}_k)=1}
\dot{W} \left(\frac{N(\fa)Q}{(m_1m_2)^{\phi(r)}} \right) \chi_{(m_1)}(\fa) \overline{\chi_{(m_2)}(\fa)}\\
&+
\frac{Q^{1-A}}{M^{\phi(r) - 2A\phi(r)}} \sum_{\substack{K > M^{2\phi(r)}/Q\\ \text{dyadic}}} K^{-A} B_2(K,M).
\end{align*}
(The claimed bound will use $A=2+\epsilon$.)

To deal with the first term, we separate the variables by applying the inverse Mellin transform (Lemma \ref{mellin}) for some $\sigma>0$
\[
\leq
Q
\sum_{\gcd(\fa, r \mathcal{O}_k)=1}
\sumstar_{\gcd(m_1,m_2) =1 }
\frac{a_{m_1} \overline{a_{m_2}}}{(m_1m_2)^{\phi(r)/2}}
 \chi_{(m_1)}(\fa) \overline{\chi_{(m_2)}(\fa)}
\int_{\sigma-i\infty}^{\sigma+i \infty} \dot{W}_{+}(s)  \left(\frac{N(\fa)Q}{(m_1m_2)^{\phi(r)}} \right)^{-s} \mathrm{d} s.
\]
The gcd condition is removed using M{\"o}bius inversion to get the upper bound
\[
Q
\sum_{\gcd(\fa, r \mathcal{O}_k)=1}
\sum_{d \leq 2M}\mu(d) d^{-\phi(r)}
\sumstar_{m_i \sim M/d }
\frac{a_{dm_1} \overline{a_{dm_2}}}{(m_1m_2)^{\phi(r)/2}}
 \chi_{(dm_1)}(\fa) \overline{\chi_{(dm_2)}(\fa)}
\int_{\sigma-i\infty}^{\sigma+i\infty} \dot{W}_{+}(s)  \left(\frac{N(\fa)Q}{(d^2m_1m_2)^{\phi(r)}} \right)^{-s} \mathrm{d} s.
\]
Taking absolute values and noting that $\vert n^s \vert = n^{\text{Re}(s)}$, we have the upper bound
\begin{align*}
\ll
\frac{Q^{1-\sigma}}{M^{1-4\sigma}}
\int_{-\infty}^{\infty}
\left \vert \dot{W}_{+}(\sigma + i t) \right \vert
\sum_{\gcd(\fa, r \mathcal{O}_k)=1}
N(\fa)^{-\sigma}
\sum_{d \leq 2M} d^{\phi(r)(2\sigma-1)}&
\left \vert
\sumstar_{m_1,m_2 \sim M/d} 
a_{dm_1} \overline{a_{dm_2}}
 \chi_{(dm_1)}(\fa) \overline{\chi_{(dm_2)}(\fa)}
\right \vert \mathrm{d} t.
\end{align*}
Setting $\sigma = \frac{\epsilon}{100}$ say and applying Cauchy--Schwarz to the $K$ sum, we get the bound \eqref{eq:lem6}.

For the third, observe that in the $B_2$ sum we have dropped the condition that every prime ideal dividing $\fa$ lies above a completely split rational prime. Hence $\fa$ may be divisible by some prime ideals lying above rational primes with non-trivial residue degree. Furthermore $\fa$ could be divisible by squares of ideals now. Accordingly we write $\fa$ as the product of a factor with square-free norm and one with square full norm
\[
B_2(Q,M)\leq \sum_{\substack{N(\mathfrak{c}) \leq 2Q \\ N(\mathfrak{c}) \text{ square-full}}} \sumprime_{N(\fb) \leq Q/N(\mathfrak{c})}
\mu^2(\fb \mathfrak{c}) \left \vert \sumstar_{m \sim M} \mu^2(m) a_m \chi_{(m)}(\fb \mathfrak{c}) \right \vert^2.
\]
We split the range of the $N(\mathfrak{c})$ sum dyadically so that 
\[
B_2(Q,M)
\ll
\log (2Q) \sup \limits_{1 \leq X \leq Q} 
\sum_{\substack{Q/X \leq N(\mathfrak{c}) \leq 2Q/X \\ N(\mathfrak{c}) \text{ square-full}}} \sumprime_{X/2^{\phi(r)} \leq N(\fb) \leq 2X}
\mu^2(\fb) \left \vert \sumstar_{m \sim M} \mu^2(m) a_m \chi_{(m)}(\fb \mathfrak{c}) \right \vert^2
\]
Therefore
\[
B_2(Q,M)
\ll
\log (2Q) \sup_X Q^{1/2}X^{-1/2} \left(B_1(X/2^{\phi(r)},M) + \ldots + B_1(X,M) \right).
\]
The claimed bound now follows from \eqref{eq:lem3}.

It just remains to prove \eqref{eq:lem2}.
We introduce the dual norm 
\[
C_2'(Q,M) :=  \sup_{(a_m) \not \equiv 0} \vert \vert a_m \vert \vert^{-2} \sumprime_{N(\fa) \sim Q} \mu^2(\fa) 
\left\vert \sum_{m \sim M}  a_m  \chi_{(m)}(\fa) \right\vert^2.
\] By duality, Lemma \ref{lem: duality principle}, we have $C_2(M,Q) = C_2'(Q,M)$. Assume that $(a_m)$ is a sequence attaining the supremum. Then, by applying H{\"o}lder's inequality, we have
\[
C_2'(Q,M) \ll \vert \vert a_m \vert \vert^{-2} Q^{1- 1/\nu}
\left( \sumprime_{N(\fa) \sim Q} \mu^2(\fa) \left \vert \sum_{M^{\nu} < m \leq (2M)^{\nu}} c_m \chi_{\fa}((m)) \right \vert^2 \right)^{1/\nu},
\]
where \(c_m := \sum_{m_1 \dots m_{\nu} = m} a_{m_1}\dots a_{m_{\nu}}.\)
We now break the $m$-sum into dyadic segments
\[
\left( \sumprime_{N(\fa) \sim Q} \mu^2(\fa)\left \vert \sum_{j=0}^{\nu-1} \sum_{2^jM^{\nu} < m \leq 2^{j+1}M^{\nu}} c_m \chi_{\fa}((m)) \right \vert^2 \right)^{1/\nu}\] and bound this by 
\[
\ll
\sum_{j=0}^{\nu-1}\left( \sumprime_{N(\fa) \sim Q}\mu^2(\fa)\left(\sum_{2^jM^{\nu} < m \leq 2^{j+1}M^{\nu}}\left \vert  c_m  \right \vert^2  \right) \vert \vert c_m \vert \vert^{-2}\sum_{2^jM^{\nu} < m \leq 2^{j+1}M^{\nu}}\left \vert  c_m \chi_{\fa}((m)) \right \vert^2 \right)^{1/\nu}.
\]
By the definition of $C_2'$, this suffices for the claimed bound.
\end{proof}
\begin{proof}[Proof of Theorem \ref{largesieve}] We now turn our attention to proving Theorem \ref{largesieve}.
Firstly, by the duality principle, we have $B_1(Q,M) = C_1(M,Q)$. Moreover, by the positivity of the square, we get that $C_1(M,Q) \leq C_2(M,Q)$. This, combined with \eqref{eq:lem1}, is already enough to prove the first term in \eqref{eq:lsconst}. For the next bound, we use \eqref{eq:lem2} with $\nu=2$ to see that
\[
C_2 \ll M^{\epsilon}Q^{1/2}\left[ C_2(M^2,Q)^{1/2} + C_2(2M^2,Q)^{1/2}\right].
\]
Applying \eqref{eq:lem1} yields the desired bound. Similarly, the next claimed bound in \eqref{eq:lsconst} follows from the case $\nu = 3$ of \eqref{eq:lem2}. We note that no further improvement is attained by taking larger values of $\nu$.
Setting $Q=X$ in \eqref{eq: lem4} and applying the bound that we just achieved, we have that
\[
B_2(Q,M) \ll (QM)^{\epsilon} \left(Q^{2/3}M+ Q^{4/3}\right).
\]
%
Using this and \eqref{eq:lem6} gives
\begin{align*}
B_3(Q,M) 
&\ll
Q + (QM)^{\epsilon}\frac{Q}{M^{\phi(r)}} \max \limits_{1 \leq K \leq (QM)^{\epsilon}M^{2\phi(r)} Q^{-1}} B_2(K,M)
+\frac{M^{3\phi(r)}}{Q} \sum_{K >M^{2 \phi(r)}/Q} K^{-2}B_2(K,M),\\
&\ll
Q + (QM)^{\epsilon}\frac{Q}{M^{\phi(r)}} \max \limits_{1 \leq K \leq (QM)^{\epsilon}M^{2\phi(r)} Q^{-1}} (K^{2/3}M + K^{4/3})
\\&\phantom{\ll} +\frac{M^{3\phi(r)}}{Q} \sum_{K >M^{2 \phi(r)}/Q} (K^{-4/3}M + K^{-2/3}),\\
&\ll
Q + (QM)^{\epsilon}(Q^{1/3}M^{\phi(r)/3 + 1} + Q^{-1/3}M^{5\phi(r)/3}).
\end{align*}
By combining\eqref{eq:lem3} and \eqref{eq:lem5} with this, we have
\[
B_1(Q,M) \ll Q + (QM)^{\epsilon}(Q^{1/3}M^{\phi(r)/3 + 1} + Q^{-1/3}M^{5\phi(r)/3}).
\]
Finally we may replace $Q$ by $Q^{1+\epsilon} + M^{2 \phi(r) - 3/2}$ and use the increasing property to get the claimed bound.
\end{proof}
\section{Interlude: Application of the large sieve} \label{sec:interlude}
Our aim in this section is to establish the following second moment estimate.
\[
\sum_{q \leq Q}\, \sumstar_{\substack{ \chi \bmod q \\ \chi^r = \chi_0\\ \chi \neq \chi_0}}
\vert L(1/2 + it, \chi) \vert^2 \ll  Q^{\frac{7}{6}+\epsilon}(1 + \vert t \vert)^{{\frac{1}{2}+\epsilon}} +  Q^{\frac{4}{3}+\epsilon}.
\]


The key ingredient is the approximate functional equation for Dirichlet $L$-functions (see e.g. \cite[Thm 5.3]{IK}).
\begin{lemma}[Approximate Functional Equation]
Let $\chi$ be a primitive Dirichlet character of conductor $q$. Let
\[
V_{\alpha}(x) = \frac{1}{2 \pi i} \int_{1-i\infty}^{1+i\infty} \frac{G(s)}{s}g_{\alpha}(s) x^{-s} \mathrm{d}s, \quad{} \text{ where }
g_{\alpha}(s) = \pi^{-s/2}\frac{\Gamma\left(\frac{\frac{\delta}{2}+\alpha + s}{2}\right)}{\Gamma\left(\frac{\frac{\delta}{2}+\alpha}{2}\right)}.
\]
Here $\delta =1$ if $\chi$ is even and 3 if $\chi$ is odd.
Furthermore, let $\epsilon(\chi) = i^{-1}q^{-1/2}\tau(\chi)$ and set 
\[
X_{\alpha} = \left(\frac{q}{\pi}\right)^{- \alpha} \frac{\Gamma\left(\frac{\frac{\delta}{2}-\alpha}{2}\right)}{\Gamma\left(\frac{\frac{\delta}{2}+\alpha}{2}\right)}.
\]
Then for any $\vert \Re(\alpha) \vert < \frac{1}{2}$, we have
\begin{equation}\label{eq:func}
L\left(\frac{1}{2} + \alpha, \chi\right)
=
\sum_{m=1}^{\infty} \frac{\chi(m)}{m^{\frac{1}{2}+\alpha}} V_{\alpha}\left(\frac{m}{\sqrt{q}}\right)
+
\epsilon(\chi)X_{\alpha} \sum_{m=1}^{\infty} \frac{\overline{\chi(m)}}{m^{\frac{1}{2}- \alpha}} V_{-\alpha}\left(\frac{m}{\sqrt{q}}\right).
\end{equation}
\end{lemma}

Using the inequality $\vert a + b\vert^2 \leq 2 \vert a \vert^2 + 2 \vert b \vert^2$, we have
\[
\sum_{ q \leq Q} \sumstar_{\substack{ \chi \bmod q\\ \text{ord}(\chi) = r}} \left\vert L\left(\frac{1}{2} + it, \chi\right) \right\vert^2
\ll\sum_{q \leq Q} \sumstar_{\substack{ \chi \bmod q\\ \text{ord}(\chi) = r}}
\left \vert \sum_{m =1}^{\infty} \frac{\chi(m)}{m^{\frac{1}{2} + it}} V_{it}\left(\frac{m}{\sqrt{q}}\right)\right \vert^2.
\]
(Note that the term involving $V_{-it}$ contributes an identical amount.)
Using standard bounds, we have that $V_{it}(x) \ll_A (1 + x(1+\vert t \vert )^{-1/2})^{-A}$ so that we may truncate the $m$ sum at $M := (Q(1+\vert t \vert))^{1/2+\epsilon}$ upto an acceptable error term. 
We break both the $q$ and $m$ sum into dyadic intervals. The weight $V_{it}$ may be removed by an application of the inverse Mellin transform (Lemma \ref{mellin}). 
This gives an upper bound of the form
\[
\ll
\frac{Q^{\sigma}}{M^{2\sigma}} \int_{-\infty}^{\infty} \vert V_{it,+}(\sigma + it) \vert
\sum_{q \leq Q} \sum_{\substack{ \chi \bmod q \\ \text{ord}(\chi) = r}}
\left \vert \sum_{M' \leq m < 2M'} \frac{\chi(m)}{m^{1/2 + it}} \right \vert^2
\mathrm{d}t.
\]

We may write $m = d^2 n$ for $n$ squarefree then applying the Cauchy--Schwarz inequality one is left to evaluate the sums
\[
\sum_{d \leq \sqrt{2M'}}\frac{1}{d}
\sum_{Q' \leq q \leq 2 Q'} \sumstar_{\substack{\chi \bmod q \\ \text{ord}(\chi) =r}} 
\left \vert \sum_{M'/d^2 \leq m \leq 2M'/d^2}\frac{\chi(m)}{m^{\frac{1}{2} + it}}  \right \vert^2.
\]
To do this we apply Theorem \ref{largesieve}, specifically the bound $\ll Q^{2/3}M + Q^{4/3}$.



\section{The major arcs} \label{section: major arc}
Let $r \in \NN$ be a prime. Our starting point in the proof of Theorem \ref{thm: average} is the identity 
\[
\sum_{\substack{n \le x \\ n \equiv n_0 \bmod{M_0} }} \Lambda(n^r + k)
=
\int_0^1 \sum_{m \leq x^r + k}\Lambda(m)e(\alpha m) \sum_{\substack{n \le x \\ n \equiv n_0 \bmod{M_0} }}e(- \alpha(n^r + k)) \mathrm{d}\alpha
.\]
Note that if $\gcd(n_0^r +k,M_0) \neq 1$ then there is no way that the left hand side is non-zero, so henceforth we assume they are coprime.
As is typical in circle method problems, we will partition the interval $[0,1]$ into major arcs $\mathfrak{M}$ and minor arcs $\mathfrak{m}$.
For $Q_1 := (\log x)^{c_1}$ for some fixed $c_1>0$, we set
$$
\mathfrak{M} =  \bigcup_{q \le Q_1} \bigcup_{\substack{a=1 \\ (a,q)=1}}^{q} I_{a,q},
$$
where
$$
I_{a,q} := \left [\frac{a}{q} - \frac{1}{qQ_2} , \frac{a}{q} + \frac{1}{qQ_2} \right ], 
$$
where $Q_2:=x^{r-1-\varepsilon}$. When $x$ is sufficiently large the inequality $Q_2 > Q_1$ holds, implying that the intervals $I_{a,q}$ with $q \le Q_1$ become disjoint.

\begin{remark}
The choice of major arcs is slightly unconventional here. Detecting primes in the major arcs is a problem comparable to evaluating character sums over primes in short intervals. The smaller one makes $Q_2$, the smaller these intervals become and the harder the problem. However, if $Q_2$ has size close to $x^r$, then it is not possible to get the necessary cancellation in the minor arcs.
\end{remark}

We write $\alpha \in \mathfrak{M}$ as
$\alpha = \frac{a}{q} + \beta$ for some $a, q \in \mathbb{Z}$ with $\gcd(a,q)=1$ and with $|\beta| \le \frac{1}{qQ_2}$.

Let 
\begin{align*}
S_1(\alpha) & := \sum_{m \le x^r+k} \Lambda(m)e(\alpha m),\\
S_2(\alpha)  & := \sum_{\substack{n \le x \\ n \equiv n_0 \bmod{M_0} }} e  (-\alpha n^r  ).
\end{align*}

We will first estimate $S_1(\alpha)$. For convenience, we set
\begin{equation} \label{eq: z}
z:= x^r +k.
\end{equation}
We have
\begin{align*}
S_{1}(\alpha) & = \sum_{m \le z} \Lambda(m)e(am/q)e(\beta m) \nonumber \\
& = \sum_{\substack{m \le z \\ (m,q)=1}} \Lambda(m)e(a m/q)e(\beta m) +  \sum_{\substack{m \le z \\ (m,q)> 1}} \Lambda(m)e(a m/q)e(\beta m). 
\end{align*}
Let $q = \prod_{i=1}^t p_i^{\delta_i}$ be the prime factorisation of $q$.
If $ (m,q)> 1$, then $\Lambda(m) \neq 0$ if and only if $m = p_i^{\alpha_i}$ for some $i \in \{1, ..., t\}$ and some $\alpha_i > 0$. Hence,
\begin{align*}
\sum_{\substack{m \le z \\ (m,q)> 1}} \Lambda(m)e(a m/q)e(\beta m) = & \sum_{i=1}^t \sum_{\alpha_i = 1}^{\infty} \sum_{p_i^{\alpha_i} < z} \log(p_i) e(a p_i^{\alpha_i}/q)e(\beta  p_i^{\alpha_i}) \\
\leq & \log z  \sum_{i=1}^t \sum_{\alpha_i = 1}^{\log_{p_i}(z)}  1.
\end{align*}
If $p_i \geq 3$, then $\log_{p_i}(z) \leq \log z$. Since $\log_{2} z = \log z/\log 2$, we have
\[
\sum_{\substack{m \le z \\ (m,q)> 1}} \Lambda(m)e(a m/q)e(\beta m)
\ll \log q \log^2 z.
\]

Hence, 
\begin{align*}
S_{1}(\alpha) & = \sum_{\substack{m \le z \\ (m,q)=1}} \Lambda(m)e(a m/q)e(\beta m) +  O(\log q \log^2 z). 
\end{align*}
We may rewrite the main term as
\begin{align*}
\sum_{\substack{m \le z \\ (m,q)=1}} \Lambda(m)e(a m/q)e(\beta m) = \frac{1}{\varphi(q)} \sum_{\chi  \bmod q} \tau(\overline{\chi}) \chi(a) \sum_{\substack{m \le z\\  (m,q)=1}} \chi(m) \Lambda(m) e(\beta m),
\end{align*}
where $
\tau(\chi) := \sum_{n=1}^{q} \chi(n) e(n/q)
$
is the Gauss sum.

The largest contribution will occur when the character $\chi$ is the principal character $\chi_0 \Mod q$ so we separate this case out
\begin{align*}
&  \frac{1}{\varphi(q)} \sum_{\chi  \bmod q} \tau(\overline{\chi}) \chi(a)  \sum_{\substack{m \le z\\  (m,q)=1}} \chi(m) \Lambda(m) e(\beta m)\\
= &  \frac{1}{\varphi(q)} \tau(\overline{\chi_0}) \chi_0(a) \sum_{\substack{m \le z\\  (m,q)=1}} \chi_0(m) \Lambda(m) e(\beta m) + \frac{1}{\varphi(q)} \sum_{\substack{\chi  \bmod q  \\ \chi \neq \chi_0 }} \tau(\overline{\chi}) \chi(a)  \sum_{\substack{m \le z\\  (m,q)=1}}\chi(m) \Lambda(m) e(\beta m).
\end{align*}
Further since $\tau(\overline{\chi_0}) = \mu(q)$ and $\gcd(a,q) =1$, we have
\begin{align*}
 & \frac{1}{\varphi(q)}  \tau(\overline{\chi_0}) \chi_0(a)  \sum_{\substack{m \le z\\  (m,q)=1}} \chi_0(m) \Lambda(m) e(\beta m)\\
=&   \frac{\mu(q)}{\varphi(q)} \sum_{\substack{m \le z}} \Lambda(m) e(\beta m) + O(\log q \log^2 z)\\
=&   \frac{\mu(q)}{\varphi(q)} \left  (\sum_{\substack{m \le z}}e(\beta m) + \sum_{\substack{m \le z}} (\Lambda(m)-1)e(\beta m) \right  ) + O(\log q \log^2 z).\\
\end{align*}
It follows that
\begin{align*}
&  \frac{1}{\varphi(q)} \sum_{\chi  \bmod q} \tau(\overline{\chi}) \chi(a)  \sum_{\substack{m \le z\\  (m,q)=1}} \chi(m) \Lambda(m) e(\beta m)\\
= &  \underbrace{\frac{\mu(q)}{\varphi(q)} \sum_{\substack{m \le z}} e(\beta m)}_{=:T_1(\alpha)} + \underbrace{\frac{1}{\varphi(q)} \sum_{\substack{\chi \bmod q   }} \tau(\overline{\chi}) \chi(a) \sum_{\substack{m \le z\\ \gcd(m,q) =1}}^{\#} \chi(m) \Lambda(m) e(\beta m)}_{=:E_1(\alpha)} + O(\log q \log^2 z),
\end{align*}
where the $\#$ over the summation symbol means that if $\chi=\chi_0$, then $\chi(n) \Lambda(n)$ is replaced by $\Lambda(n)-1$.
Hence,
$$
 S_1(\alpha)= T_1(\alpha) + E_1(\alpha) +O \left (\log q \log^2 z\right ).
$$
We write
\[
S_2(\alpha) = \sum_{\substack{n \leq x \\ n \equiv n_0 \bmod{M_0}}} e\left(-\frac{a}{q}n^r\right) e(- \beta n^r)
=
\sum_{\substack{b \bmod{[M_0,q]}\\b \equiv n_0 \bmod{M_0}}} e\left(-\frac{a}{q}b^r\right) \sum_{\substack{n \leq x\\ n \equiv b \bmod{[M_0,q]}}} e (- \beta n^r) .
\]

\section{Computing the singular series} 
The main term contributing in the major arc is
\begin{align*}
& \int_{\mathfrak{M}} T_1(\alpha) S_2(\alpha) e(-\alpha k) \ \mathrm{d}  \alpha \\
=&  \sum_{q \le Q_1 } \frac{\mu(q)}{\varphi(q)} \sum_{\substack{a   \bmod q \\ (a,q)=1}} e \left (\frac{-ak}{q} \right )
\sum_{\substack{b \bmod{[M_0,q]}\\b \equiv n_0 \bmod{M_0} }} e\left(-\frac{a}{q}b^r\right)
 \int_{|\beta| < \frac{1}{qQ_2}} \Theta_{d,q}(\beta) \ \mathrm{d} \beta,
\end{align*}
where
$$
 \Theta_{d,q}(\beta) := e(-\beta k)  \sum_{\substack{n \leq x \\ n \equiv b \bmod{[M_0,q]}}} e \left (-\beta n^r \right ) \sum_{m \le z} e \left (-\beta m \right ). 
$$
By the linear case of Lemma \ref{lem: weyl}, extending the integral over $\beta$ to all of $[0,1]$ incurs an error of size at most 
\[
\int_{\frac{1}{qQ_2}}^{\frac{1}{2}} \frac{1}{\beta} \left\vert  \sum_{\substack{n \leq x \\ n \equiv b \bmod{[M_0,q]} }} e \left (-\beta n^r \right ) e(-k\beta)\right\vert \mathrm{d}\beta.
\]

We apply H\"older's inequality  to bound this error by
\[
\begin{array}{l}
\ll \left(\int_{\frac{1}{qQ_2}}^{\frac{1}{2}} \frac{1}{\beta^{\frac{2^r}{2^r-1}}} \mathrm{d}\beta \right)^{\frac{2^r-1}{2^r}}
\left(\int_{0}^{1} \left \vert \sum_{\substack{n \leq x \\ n \equiv b \bmod{[M_0,q]}}} e \left (-\beta n^r \right) e(-k\beta) \right\vert^{2^r} \mathrm{d}\beta\right)^{\frac{1}{2^r}}\\
\ll \left((qQ_2)^{\frac{2^r}{2^r-1}-1}\right)^{\frac{2^r-1}{2^r}}
\left(\int_{0}^{1} \left \vert  \sum_{\substack{n \leq x \\ n \equiv b \bmod{[M_0,q]}}} e \left (-\beta n^r \right)  \right\vert^{2^r} \mathrm{d}\beta\right)^{\frac{1}{2^r}}\\
\ll  (qQ_2)^{\frac{1}{2^r}}
\left(\int_{0}^{1} \left \vert \sum_{\substack{n \leq x \\ n \equiv b \bmod{[M_0,q]}}} e \left (\beta n^r \right)  \right\vert^{2^r} \mathrm{d}\beta\right)^{\frac{1}{2^r}},
\end{array}
\]
where the last inequality follows by a change of variables.

By Hua's lemma with congruences (see Lemma \ref{hua}), we can choose any $0<\epsilon'' < \epsilon$ and obtain $\int_{0}^{1} \left \vert \sum_{\substack{n \leq x \\ n \equiv b \bmod{[M_0,q]}}} e \left (\beta n^r \right)  \right\vert^{2^r} \mathrm{d}\beta \ll \left(\frac{x}{[M_0, q]}\right)^{2^r-r + \epsilon''} $. Hence, the error above can be bound by
\[
\begin{array}{l}
\ll  (qQ_2)^{\frac{1}{2^r}} \left( \left(\frac{x}{[M_0, q]}\right)^{2^r-r + \epsilon''}\right)^{\frac{1}{2^r}}\\
 \ll   \left(\frac{1}{[M_0, q]}\right)^{1 - \frac{r}{2^r} + \frac{\epsilon}{2^r}} \left((\log x)^{c_1}  x^{2^r  -1 + \epsilon'' - \epsilon}\right)^{\frac{1}{2^r}}\\
\ll \left(\frac{1}{[M_0, q]}\right)^{1 - \frac{r}{2^r} + \frac{\epsilon}{2^r}}  (\log x)^{\frac{c_1}{2^r}} x^{1 - \epsilon'},\\
\end{array}
\]
where $\epsilon' := \frac{-\epsilon'' + \epsilon + 1}{2^r}>0$.

Meanwhile, by orthogonality, we have  
\[
\int_0^1  \Theta_{d,q}(\beta) \mathrm{d}\beta
=
\sum_{\substack{n \leq x  \\ n \equiv b \bmod{[M_0,q]}}} 1
=
\frac{x}{[M_0,q]}+ O \left(1\right).
\]

Therefore
\begin{align*}
& \int_{\mathfrak{M}} T_1(\alpha) S_2(\alpha) e(-k\alpha) \ \mathrm{d} \alpha \\
 =  & \sum_{q \le Q_1 } \frac{\mu(q)}{\varphi(q)} \sum_{\substack{a \bmod q \\ (a,q)=1}} e \left (\frac{-ak}{q} \right )\sum_{\substack{b \bmod{[M_0,q]}\\b \equiv n_0 \bmod{M_0} }} e\left(-\frac{a}{q}b^r\right) \left \{ \frac{x}{qM_0}\gcd(q,M_0) + O \left (   \frac{ (\log x)^{\frac{c_1}{2^r}} x^{1 - \epsilon'} }{[M_0, q]^{1 - \frac{r}{2^r} + \frac{\epsilon}{2^r}}} \right )  \right \}\\
= & \frac{x}{M_0}\sum_{q \le Q_1 } \frac{\mu(q)\gcd(q,M_0)}{\varphi(q)q} \sum_{\substack{a  \bmod q \\ (a,q)=1}} e \left (\frac{-ak}{q} \right ) \sum_{\substack{b \bmod{[M_0,q]}\\ b \equiv n_0 \bmod{M_0}}} e \left (\frac{-a b^r}{q} \right  )  + O \left ( (\log x)^{c_2} x^{1 - \epsilon'}  \right ),
\end{align*}
for some suitable $c_2>0$.

 We define
\[
\Sigma(q)   := \sum_{\substack{a \bmod q \\ (a,q)=1}} e \left (\frac{-ak}{q} \right )\sum_{\substack{b \bmod{[M_0,q]}\\ b \equiv n_0 \bmod{M_0}}} e \left (\frac{-a b^r}{q} \right  )
=
\sum_{\substack{c \bmod{q/(q,M_0)}}}  \sum_{\substack{a \bmod q \\ (a,q)=1}} e \left (\frac{-a((n_0 + c M_0)^r + k)}{q} \right) .\]

Firstly, we note that $\Sigma(q)$ is a multiplicative function of $q$.
Indeed for $\gcd(q_1,q_2) = 1$, let $q_i' := q_i/\gcd(q_i,M_0)$. Then by the Chinese Remainder theorem, we may write $\Sigma(q_1q_2)$ as
\[
\Sigma(q_1q_2)
=
\mathop{\sum_{c_1 = 1}^{q_1'}
\sum_{c_2 =1}^{q_2'}}
\mathop{
\sum_{\substack{a_1 \Mod q_1 }}
\sum_{\substack{a_2 \Mod q_2 }}}_{ \gcd(a_1q_2 + a_2q_1, q_1q_2) =1 }
e \left( - \frac{(a_1q_2+a_2q_1)\left(((c_1q_2' + c_2q_1')M_0 + n_0)^r + k\right)}{q_1 q_2}\right)
=
\Sigma(q_1) \Sigma(q_2).
\]
Therefore, $\Sigma(q)$ is determined by prime power values of $q$, but since $q$ is squarefree, we only need to understand when $q=p$ is a prime. 
In this case, the value of the inner Ramanujan sum is
\[
 \sum_{\substack{a \bmod p \\ (a,p)=1}} e \left (\frac{-a((cM_0+n_0)^r + k)}{p} \right) 
=
\begin{cases}
p-1 \text{ if } (cM_0 + n_0)^r + k \equiv 0 \Mod p,\\
-1 \text{ otherwise}.
\end{cases}
\]
Since $\gcd(n_0^r +k, M_0) = 1$, we have that $\Sigma(p) = -1$ for all $p \mid M_0$.
At all other primes we may make a linear change of variables so that
\[\Sigma(p) 
 = (p-1)n_{k,p}-(p-n_{k,p}) 
 = p(n_{k,p} -1 ),\]
where $n_{k,p}$ is the number of $\mathbb{Z}/p\mathbb{Z}$-solutions to \(
u^r + k \equiv 0  \bmod{p}.\)
It follows that, for some fixed constant $c_2 >0$, we have
\begin{align}
 \int_{\mathfrak{M}} T_1(\alpha) S_2(\alpha) e(-k\alpha) \ \mathrm{d} \alpha &  = \frac{x}{M_0} \sum_{q \le Q_1 } \frac{\mu(q)\gcd(q,M_0)}{\varphi(q)q} \Sigma(q) +   O \left ((\log x)^{c_2} x^{1 - \epsilon'}  \right ) \nonumber \\
 & = \mathfrak{S}_{n_0, M_0}(k) x+ O \left ( x |\Psi(k)| +(\log x)^{c_2} x^{1 - \epsilon'}  \right ), \label{eq: marjor arc}
\end{align}
where
\begin{equation} \label{psik}
\Psi(k):=\sum_{q >Q_1} \frac{\mu(q)}{\varphi(q)} \prod_{p|q}(n_{k,p}-1).
\end{equation}

Note that the map $x \mapsto x^r$ is a bijection on $\ZZ/p\ZZ$ if and only if $\gcd(r, p-1)=1$, and in this case $n_{k,p} = 1$. Note also that, since $r$ is assumed to be prime, we have $\gcd(r, p-1)=1$ if and only if $ p \not\equiv 1 \bmod{r}$.
If $p \equiv 1 \bmod{r}$, then it is well known that
\[n_{p,k} = \sum_{\substack{\chi  \bmod{p} \\ \chi^r = \chi_0}} \chi(-k ) =  \sum_{\substack{\chi  \bmod{p} \\ \chi^r = \chi_0\\ \chi \neq \chi_0}} \chi(-k ) + 1.\]
We deduce that
\begin{equation}\label{blahblah}
n_{k,p}-1=
\begin{cases}
0 & \mbox{if $p \not\equiv 1 \bmod{r}$,} \\
 \sum \limits_{\substack{ \chi \bmod p \\ \chi^r = \chi_0 \\ \chi \neq \chi_0}} \chi(-k) & \mbox{if $p\equiv 1 \bmod{r}$.} \\
\end{cases}
\end{equation}

Hence, we can assume in \eqref{psik} that $q$ is such that $p \equiv 1 \bmod{r}$ for all $p |q$.


\section{Bounding the second moment of $\Psi(k)$} \label{se tion: second moment of Psi}

We now study the second moment of \eqref{psik}.
First, we partition the second moment into three pieces
$$
\sum_{\substack{k \le y}} |\Psi(k)|^2 \ll \Psi_1 + \Psi_2 + \Psi_3,
$$
where
\[
\begin{array}{lll}
\Psi_1 & := & \sum_{k \le y} \left | \sum_{Q_1 < q \le U }  \frac{\mu(q)}{\varphi(q)} \prod_{p|q}(n_{k,p}-1 ) \right|^2, \\
\Psi_2 &:=  &\sum_{\substack{k \le y }}  \left | \sum_{U < q \le 2^\nu U }  \frac{\mu(q)}{\varphi(q)} \prod_{p|q}( n_{k,p}-1) \right|^2, \\
\Psi_3 &:=  &\sum_{\substack{k \le y }}  \left | \sum_{ 2^\nu U < q }  \frac{\mu(q)}{\varphi(q)} \prod_{p|q}( n_{k,p}-1  )  \right|^2,
\end{array}\]
and $U,\nu$ are parameters to be chosen later (see the end of this section).

 We deal with $\Psi_1$ first. Expanding the square and using the fact that a degree $r$ polynomial has at most $r$ roots in $\FF_p$, we obtain 
\begin{align} \label{rando1}
\Psi_1 \le &   \hspace{2mm} y \sum_{Q_1 < q \le U } \frac{\mu^2(q)}{\varphi(q)^2} (r-1)^{2\omega(q)}  + \sum_{ \substack{ Q_1 < q_1, q_2 \le U \\ q_1 \neq q_2 }} \frac{\mu(q_1) \mu(q_2)}{\varphi(q_1) \varphi(q_2) } \sum_{k \le y} P_{q_1,q_2}(k),
\end{align}
where
\begin{align*}
& P_{q_1,q_2}(k)   := \prod_{p|q_1} ( n_{k,p}-1) \prod_{p|q_2}( n_{k,p}-1).
\end{align*}
The first term in the right-hand side of \eqref{rando1} can be bounded by
$$
y \sum_{Q_1 < q \le U} \frac{(r-1)^{2\omega(q)} (\log \log  10q)^2}{q^2} \ll \frac{y}{Q_1^{1-\varepsilon}},
$$
where $\omega(q)$ is the number of distinct primes dividing $q$, 
via the well-known bounds 
\begin{equation*} 
\frac{q}{\log \log 10q} \ll \varphi(q),
\end{equation*}
and
$$
 \omega(q) \ll \frac{\log q}{\log \log q}.
$$

By \eqref{blahblah}, for any $q_1 \neq q_2$,  we have that $P_{q_1,q_2}(k) = 0$ unless $p_1 \equiv 1 \bmod{r}$ for all $p_1 | q_1$ and $p_2 \equiv 1 \bmod{r}$ for all $p_2 | q_2$. Expanding $P_{q_1,q_2}(k)$ thus yields
\begin{equation}\label{Pq1q2}
P_{q_1,q_2}(k) = \sumstar_{\substack{\chi_1 \bmod{q_1}\\ \chi_1^r = \chi_0\\ \chi_1 \neq \chi_0 }} \chi_1(-k)\sumstar_{\substack{\chi_2 \bmod{q_2}\\ \chi_2^r = \chi_0 \\ \chi_2 \neq \chi_0 }} \chi_2(-k)
\end{equation}
where $q_1$ and  $q_2$ are such that $p_1 \equiv 1 \bmod{r}$ for all $p_1 | q_1$ and $p_2 \equiv 1 \bmod{r}$ for all $p_2 | q_2$.
Note that  the resulting characters $\chi_1 \chi_2$ after expanding \eqref{Pq1q2}
 are of modulus $q_1 q_2$ and cannot be principal,  since $\chi_1$ and $\chi_2$ are both primitive and $q_1 \neq q_2$. 
Hence, by P{\'o}lya--Vinogradov (Lemma \ref{lem: polya-vinogradov}), the second term in \eqref{rando1} can be bounded by
\begin{align*}
\sum_{\substack{Q_1 < q_1,q_2 \le U \\ q_1 \neq q_2 }} \frac{(q_1q_2)^{1/2} \log (q_1q_2) \phi(r)^{\omega(q_1) + \omega(q_2) }}{\varphi(q_1) \varphi(q_2)} & \ll (\log U) \left (\sum_{Q_1 < q \le U}  \frac{q^{1/2} \phi(r)^{\omega(q)}}{\varphi(q)} \right )^2\\
&  \ll U^{1+\varepsilon}.
\end{align*}
Hence, we get
\begin{equation} \label{eq: Psi_1 bound}
\Psi_1 \ll \frac{y}{Q_1^{1-\varepsilon}}  + U^{1+\varepsilon}.
\end{equation}


Let us now turn to bounding $\Psi_2 $. 
We start by breaking the $q$-sum into dyadic intervals, so that
\begin{align*}
\Psi_2 &= 
 \sum_{\substack{k \le y}}  \left | \sum_{s=1}^{\nu}\sum_{2^sU < q \le 2^{s+1}U }  \frac{\mu(q)}{\varphi(q)} \sum_{\substack{\chi \bmod q \\ \chi^r = \chi_0}} \chi(-k) \right|^2\\
&\leq 
\nu
 \sum_{\substack{k \le y }}
\sum_{s=1}^{\nu}
 \left | \sum_{2^sU < q \le 2^{s+1}U }  \frac{\mu(q)}{\varphi(q)} \sum_{\substack{\chi \bmod q \\ \chi^r = \chi_0}} \chi(-k) \right|^2.
\end{align*}

For $R$ a suitable parameter, to be chosen later, and $s \leq R$, we aim to use the large sieve for order $r$ characters to bound
\[  \sum_{\substack{k \le y }}  \left | \sum_{2^sU < q \le 2^{s+1}U }  \frac{\mu(q)}{\varphi(q)}  \sum_{\substack{\chi \mod q \\ \chi^r = \chi_0}} \chi(-k) \right|^2.\]

Note that by Theorem~\ref{largesieve}  the dual version of this sum can be bounded by 
\begin{align*}
 & \sum_{2^sU < q \le 2^{s+1}U } \sum_{\substack{\chi  \bmod q \\ \chi^r = \chi_0}}  \left | \sum_{\substack{k \le y  }} a_k \chi(k) \right |^2 \\
 =&\sum_{2^sU < q \le 2^{s+1}U } \sum_{\substack{\chi  \bmod q \\ \chi^r = \chi_0}} \left |  \sum_{\ell^2 \le y} \sum_{\substack{m \le y /\ell^2}} \mu^2(m) a_{\ell^2 m} \chi(\ell^2) \chi(m) \right |^2  \\
 \ll & y^{1/2 }  \sum_{2^sU < q \le 2^{s+1}U } \sum_{\substack{\chi  \bmod q \\ \chi^r = \chi_0}}  \sum_{\ell^2 \le y}    \left |  \sum_{\substack{m \le y /\ell^2}} \mu^2(m) a_{\ell^2 m} \chi(m) \right |^2   \\
  \ll & y^{1/2}  (2^sU +y^{\frac{r+2}{3}}(2^sU)^{1/3}+y^{r-\frac{1}{2}})  \sum_{\substack{ k \le y  }}  |a_k|^2.
\end{align*}
Hence by the duality principle (Lemma~\ref{lem: duality principle}), we have
\begin{align*}
\sum_{\substack{k \le y  }}   &  \left | \sum_{2^sU < q \le 2^{s+1}U } \sum_{\substack{\chi  \bmod q \\ \chi^r = \chi_0}} \frac{\mu(q)}{\varphi(q)}\chi(k) \right |^2  \\
&  \ll y^{1/2 } (2^sUy)^{\epsilon} (2^sU +y^{\frac{r+2}{3}}(2^sU)^{1/3}+y^{r-\frac{1}{2}})   \sum_{2^sU <q \le 2^{s+1} U} \frac{1}{\varphi(q)^2}. \\
\end{align*}
But $  \sum_{2^sU <q \le 2^{s+1} U} \frac{1}{\varphi(q)^2} \ll (2^sU)^{\epsilon-1}$, and so 
\[\sum_{\substack{k \le y  }}     \left | \sum_{2^sU < q \le 2^{s+1}U } \sum_{\substack{\chi  \bmod q \\ \chi^r = \chi_0}} \frac{\mu(q)}{\varphi(q)}\chi(k) \right |^2 \ll y^{1/2}(2^sUy)^{\epsilon}  
(1 + y^{\frac{r+2}{3}}(2^sU)^{-2/3} + y^{r-\frac{1}{2}}(2^sU)^{-1} ) .\]
Taking $R=\lfloor \log_{2} ( y^{2\phi(r) +\epsilon}/U) \rfloor$  and summing over the $s \leq R$ yields
\begin{equation}\label{psi21}
\sum_{\substack{k \le y }}     \left | \sum_{U < q \le 2^RU } \sum_{\substack{\chi  \bmod q \\ \chi^r = \chi_0}} \frac{\mu(q)}{\varphi(q)}\chi(k) \right |^2  \ll y^{1/2 + \epsilon} .
\end{equation}

We now consider the range when $q \in [2^RU, 2^\nu U]$. 
Recall (Proposition \ref{correspondence}) that primitive characters of order $r$ and  of squarefree conductor $q$ coprime to $r$ can be realised as $r$-th power residue symbols $\left (\frac{m}{\mathfrak{a}} \right )_r$ for some square-free  $\mathfrak{a}\subset \mathcal{O}_{\mathbb{Q}(\zeta_r)}$, where every prime dividing $\mathfrak{a}$ lies above a completely split rational prime and $N(\mathfrak{a}) = q$. 
Therefore when $R< s \le \lfloor \nu+1 \rfloor$, it is enough to bound
\begin{equation*} \label{eq: large sieve}
 \sum_{\substack{m \in \mathcal{O}_{\mathbb{Q}(\zeta_r)} \\ N(m) \leq y^{\phi(r)} }} \left | \sum_{\substack{\mathfrak{a} \subset\mathcal{O}_{\mathbb{Q}(\zeta_r)} \\ 2^{s-1} < N(\mathfrak{a})=q \le 2^s U}} b_{\mathfrak{a}} \chi_{(m)}(\mathfrak{a})  \right |^2,
\end{equation*}
where $b_{\mathfrak{a}} := \mu(N(\mathfrak{a}))/\varphi(N(\mathfrak{a}))$, $\chi_{(m)}(\mathfrak{a}) := \left (\frac{m}{\mathfrak{a}} \right )_r$. 
To this we may apply the number field large sieve (see Lemma \ref{lem: large sieve number field}) and get
\begin{align*}
 \sum_{\substack{m \in \mathcal{O}_{\mathbb{Q}(\zeta_r)} \\ N(m) \leq y^{\phi(r)} }} \left | \sum_{\substack{\mathfrak{a} \subset \mathcal{O}_{\mathbb{Q}(\zeta_r)} \\ 2^{s} < N(\mathfrak{a})=q \le 2^{s+1} U}} b_{\mathfrak{a}} \chi_{(m)}(\mathfrak{a})  \right |^2
&\ll
(y^{2 \phi(r)} + 2^s U) \sum_{\substack{\mathfrak{a} \subset\mathcal{O}_{\mathbb{Q}(\zeta_r)} \\ 2^{s} < N(\mathfrak{a})=q \le 2^{s+1} U}} \frac{1}{\phi(N(\mathfrak{a}))}\\
& \ll (y^{2 \phi(r)} + 2^s U) \frac{\log\log(2^s U) \log(2^s U)}{2^s U}.
\end{align*}
Summing over the $s$ from $R$ to $\lfloor\nu +1\rfloor$, we get
\begin{align}
& \sum_{s = R}^{\lfloor \nu +1\rfloor}
 \sum_{\substack{m \in \mathcal{O}_{\mathbb{Q}(\zeta_r)} \\ N(m) \leq y^{\phi(r)} }} \left | \sum_{\substack{\mathfrak{a} \subset \mathcal{O}_{\mathbb{Q}(\zeta_r)} \\ 2^{s} < N(\mathfrak{a})=q \le 2^{s+1} U}} b_{\mathfrak{a}} \chi_{(m)}(\mathfrak{a})  \right |^2 \nonumber\\
\ll & y^{2 \phi(r)}(\nu + \log U)\log(\nu + \log U) \sum_{s = R}^{\lfloor \nu +1 \rfloor} \frac{1}{2^s U} 
+
\sum_{s = R}^{\lfloor \nu +1 \rfloor}(s+ \log U)\log(s + \log U)\nonumber\\
\ll &
\frac{ y^{2 \phi(r)}(\nu + \log U)\log(\nu + \log U) }{2^R U}
+
\nu(\nu + \log U)\log(\nu + \log U).\label{psi22}
\end{align}

We now bound $\Psi_3$. For primes $p \equiv 1  \bmod{r}$, we have that $p$ splits completely in $\mathcal{O}_{\Q(\zeta_r)} $ as, say, $p \mathcal{O}_k = \pi_{p,1} \cdot \pi_{p,2} \cdot  ... \cdot \pi_{p, \varphi(r)}$.
Let $$
b_q := \frac{\mu(q)}{\varphi(q)} \prod_{p|q} (n_{k,p}-1), 
$$
and 
consider the associated Dirichlet series
\begin{align*}
f(s,k)
& := \sum_q \frac{\mu(q)}{\varphi(q)q^s} \prod_{p|q} (n_{k,p}-1) \\
& = \prod_{p \equiv 1  \bmod{r}} \left (1 -\frac{n_{k,p}-1 }{(p-1)p^s} \right )  \\
& =  \prod_{\substack{p \equiv 1   \bmod{r} \\ p= \pi_{p,1} \cdot ... \cdot \pi_{p,\varphi(r)} }} \left  (1 - \frac{ \left (\frac{k}{\pi_{p,1}} \right )_r+ ... + \left (\frac{k}{\pi_{p,\varphi(r)}} \right )_r}{(p-1)p^s} \right ).
\end{align*}
(The last equality above uses \eqref{blahblah} and Proposition \ref{correspondence}.)
If $s=0$, then $f(s,k)= \mathfrak{S}_{0,1}(k)$ which is absolutely convergent so $f(s,k)$ has no poles with $\Re(s)>0$. 

We relate this to the Hecke $L$-function associated to $\legendre{k}{\cdot}_r$
\begin{align*}
L\left (s+1, \left (\frac{k}{\cdot} \right )_r \right ) & = \prod_{\pi \textrm{ prime ideal in }\Z[\zeta_r]} \left (1 - \frac{ \left (\frac{k}{\pi}   \right )_r}{N(\pi)^{s+1} } \right )^{-1}.
\end{align*}

Indeed
$$
h(s,k) := L\left (s+1, \left (\frac{k}{\cdot} \right )_r \right ) f(s,k)
=
\prod_p \left( 1 + O \left(\frac{1}{p^{2(s+1)}}\right)\right).
$$
thus $h$ is absolutely bounded for all $\Re(s)>-1/2$. Applying Perron's formula (Lemma~\ref{lem: perron}), we have
\begin{align*}
\sum_{y_1 \le q \le y_2} b_q & = \frac{1}{2\pi i} \int_{C-iT}^{C+iT} L^{-1}\left (s+1, \left (\frac{k}{\cdot} \right )_r \right )h(s,k) \frac{y_2^s - y_1^s}{s} \ \mathrm{d} s \\
& \quad  +O \left ( \sum_{j=1}^2 \sum_q |b_q| \left (\frac{y_j}{q} \right )^C \min \left  \{1 , T^{-1} \left |\log \left (\frac{y_j}{q}  \right ) \right |^{-1} \right \} \right ).
\end{align*}
for any $C,T>0$. By Lemma~\ref{lem: zero free region} we may move the line of integration back to $[\sigma-iT,\sigma+iT]$ for  $$
\sigma > 1-\frac{c'}{\phi(r) \log \left(\vert\Delta_{\Q(\zeta_r)}\vert (T +3)\right)}.
$$ without encountering a pole of the integrand.
Bounding the contributions from the horizontal and vertical line segments of the contour in the standard way (mimicking the proof of the prime number theorem), we achieve the bound
\begin{equation} \label{eq: Psi_3 bound}
\Psi_{3} \ll y \exp \left (-c_3 \sqrt{2^{\nu} U} \right ),
\end{equation}
for some fixed constant $c_3>0$.

So far we have thus computed

\[
\begin{array}{ll}
\Psi_1 & \ll \frac{y}{Q_1^{1-\varepsilon}}  + U^{1+\varepsilon}\\
\Psi_2 & \ll \nu \left(  y^{1/2 + \epsilon} + \frac{ y^{2 \phi(r)}(\nu + \log U)\log(\nu + \log U) }{y^{2\phi(r) +\epsilon}} + \nu(\nu + \log U)\log(\nu + \log U) \right)\\
\Psi_{3} & \ll y \exp \left (-c_3 \sqrt{2^{\nu} U} \right ),\\
\end{array}
\]
see~\eqref{eq: Psi_1 bound},~\eqref{psi21},~\eqref{psi22} and~\eqref{eq: Psi_3 bound}.
Recalling that we need $U > Q_1 = (\log x)^{c_1}$, $2^\nu U > U$, and $\nu > \lfloor \log_{2} ( y^{2\phi(r) +\epsilon}/U) \rfloor$, we can choose $\epsilon >0$ small enough and choose $U$ and $\nu$ to be small enough positive real powers of $y$ 
to deduce that
\begin{equation} \label{eq: Psi square average bound}
\sum_{\substack{k \le y }} |\Psi(k)|^2 \ll \frac{y}{(\log x)^{c_4} },
\end{equation}
for any given $c_4 >0$.

\section{Error terms from the major arcs} \label{section: error in major arc}
The final major arc contribution which we need to estimate is
\[
\mathcal{E} :=
\sum_{k \leq y} \left \vert 
\int_{\mathfrak{M}} E_1(\alpha)S_2(\alpha) e(-\alpha k)\mathrm{d}\alpha \right \vert^2,
\]
where
\begin{align*}
E_1(\alpha) &= \frac{1}{\varphi(q)} \sum_{\substack{\chi \bmod q   }} \tau(\overline{\chi}) \chi(a) \sum_{\substack{m \le z\\  (m,q)=1}}^{\#} \chi(m) \Lambda(m) e(\beta m),\\
S_2(\alpha) &= \sum_{\substack{b \bmod{[M_0,q]}\\ b \equiv n_0 \bmod{M_0}}} e \left(-\frac{a}{q}b^r\right) 
\sum_{\substack{ n \leq x \\ n \equiv b \bmod{[M_0,q]}}} e(\beta n^r).
\end{align*}

Now we apply Lemma~\ref{lem: bessel} with 
\[
\phi_k = e(- \alpha k) \quad{}\text{ and } \quad{} \xi(\alpha) = \begin{cases} S_2(\alpha)E_1(\alpha) \text{ if } \alpha \in \mathfrak{M}\\ 0 \text{ otherwise},\end{cases} 
\]
to get
\begin{align*}
\mathcal{E}  & \ll \int_{\mathfrak{M}} |S_2(\alpha)   E_1(\alpha)|^2 \ \mathrm{d} \alpha \\
&  \ll \sup_{\alpha \in \mathfrak{M}} |S_2(\alpha)|^2 \int_{\mathfrak{M}} |E_1(\alpha)|^2 \ \mathrm{d} \alpha \\
& \ll x^2 \int_{\mathfrak{M}} |E_1(\alpha)|^2 \ \mathrm{d} \alpha.
\end{align*}

Breaking the $m$-sum of $E_1(\alpha)$ into dyadic intervals, of which there are $O(\log z)$,  and applying the Cauchy--Schwarz inequality, we have
\begin{align*}
\int_{\mathfrak{M}} |E_1(\alpha)|^2 \ \mathrm{d} \alpha
&\ll
\log z  \sum_{q \leq Q_1} \sum_{\substack{a=1\\\gcd(a,q)=1}}^q 
\int_{\vert \beta \vert < \frac{1}{qQ_2}} \left \vert
\frac{1}{\phi(q)} \sum_{\chi \bmod{q}} \tau(\overline{\chi}) \chi(a) \sum^{\#}_{\substack{m \sim M \\ \gcd(m,q) =1}}
\chi(m) \Lambda(m) e(\beta m) \right \vert^2 \mathrm{d}\beta\\
&\ll \log z  \sum_{q \leq Q_1} \frac{q}{\phi(q_1)} \sum_{\chi \bmod{q}} \int_{\vert \beta \vert < \frac{1}{qQ_2}} \left\vert\sum^{\#}_{\substack{m \sim M \\ \gcd(m,q) =1}}
\chi(m) \Lambda(m) e(\beta m) \right \vert^2 \mathrm{d}\beta,
\end{align*}
for some $M \in [1, z]$. 

When $M^{1- \epsilon}> (\log x)^{c_1}Q_2$, we apply Lemma~\ref{lem: gallagher} to get
\begin{align*}
\int_{\vert \beta \vert < \frac{1}{qQ_2}} \left\vert\sum^{\#}_{\substack{m \sim M \\ \gcd(m,q) =1}}
\chi(m) \Lambda(m) e(\beta m) \right \vert^2 \mathrm{d}\beta
&\ll
\frac{1}{(qQ_2)^2} \int_{M - \frac{qQ_2}{2}}^{2M} \left \vert
\sum^{\#}_{\max\{t, M\} < m \leq \min\{t + \frac{qQ_2}{2}, 2M\}} \chi(m) \Lambda(m) \right \vert^2 \mathrm{d}t.
\end{align*}
We may replace the right hand side by $\mathfrak{J}(q, \frac{Q_2}{2})$ (as defined in Lemma~\ref{lem: mikawa}) upto an error of size
\[
\ll \frac{1}{(qQ_2)^2}\int_{M - \frac{qQ_2}{2}}^{M} \left \vert \sum^{\#}_{M \leq m \leq t + \frac{qQ_2}{2}} \chi(m) \Lambda(m) \right \vert^2
\ll
qQ_2 (\log M)^2.
\]
Applying Lemma \ref{lem: mikawa} with $\Delta = \frac{Q_2}{2}$ and $N = M$ (which is valid because we know that $\Delta \asymp x^{r-1-\varepsilon} > x^{r/5+\epsilon}> M^{1/5 + \epsilon}$), we get
\[
\frac{1}{(qQ_2)^2} \int_{M}^{2M} \left \vert
\sum^{\#}_{\max\{t, M\} < m \leq \min\{t + \frac{qQ_2}{2}, 2M\}} \chi(m) \Lambda(m) \right \vert^2 \mathrm{d}t
\ll M (\log M)^{-A}.
\]
If $M^{1- \epsilon}< (\log x)^{c_1} Q_2$ then the assumptions of Lemmas \ref{lem: mikawa} and \ref{lem: gallagher} are not satisfied with the choice of $\Delta$ and $N$ as above. In this case, we extend the $\beta$ integral to the range $\vert \beta \vert < \frac{1}{qM^{\frac{1}{2}}}$ and choose $\Delta = M^{\frac{1}{2}}$ and $N=M$ (unless $M$ is smaller than any power of $x$ in which case the following bound is trivial).
Therefore we conclude
\begin{align} 
\int_{\mathfrak{M}} |E_1(\alpha)|^2 \ \mathrm{d} \alpha & \ll \sum_{q < Q_1} \frac{q}{\varphi(q)} (qQ_2)^{-2} \mathfrak{J} (q,Q_2/2) + Q_1^3 Q_2(\log x)^2  \nonumber \\
& \ll \sum_{q < Q_1} \frac{q}{\varphi(q)} z (\log z)^{-A},\nonumber
\end{align}
for any $A>0$. Thus
\begin{equation}\label{eq: bound for e}
\mathcal{E} \ll \frac{x^2 z}{(\log x)^{c_8} }.
\end{equation}

\section{The  minor arcs} \label{section: minor arc}
Finally we estimate the minor arc contribution
\begin{align*}
 \sum_{\substack{k \le y }}   \left | \int_{\mathfrak{m} }  S_1(\alpha) S_2(\alpha) e(-\alpha k) \ \mathrm{d} \alpha \right |^2  = \sum_{\substack{k \le y  }} \left | \int_{\mathfrak{m} }  \sum_{m \le z} \Lambda(m)e(\alpha m) \sum_{\substack{n \le x\\ n \equiv n_0 \bmod{M_0}}} e(-\alpha(n^r +k)) \ \mathrm{d} \alpha \right |^2.
\end{align*}
Using Lemma~\ref{lem: bessel} as in the previous section, we bound the minor arc contribution by
$$
 \sup_{\alpha \in \mathfrak{m}} |S_2(\alpha)|^2 \int_0^1 |S_1(\alpha)|^2  \ \mathrm{d} \alpha \ll ( z \log z ) \sup_{\alpha \in \mathfrak{m}} |S_2(\alpha)|^2 .
$$
Weyl differencing (Lemma~\ref{lem: weyl}) gives us
$$
|S_{2}(\alpha)|^{2^{r-1}} \ll x^{2^{r-1}-r}  \sum_{-x < \ell_1, ... ,\ell_{r-1} <x} \min \left \{ x, \frac{1}{\lVert r! \alpha \ell_1 ... \ell_{r-1}  \rVert} \right \}.
$$
As consequence of Dirichlet's Diophantine approximation theorem, we have the inequality
$$
\left  | \alpha - \frac{a}{q}  \right | \le\frac{1}{2 r! x^{r-1} q},
$$
where $(a,q)=1$ and $1 \le q \le 2 r! x^{r-1}$. Since $\frac{1}{2r!x^{r-1}q} < \frac{1}{qQ_2}$, for $\alpha \in \mathfrak{m}$, we must have that $q > Q_1$. For $-x < \ell_1, ... , \ell_{r-1} <x$, we have
$$
\left |r!  \ell_1 ... \ell_{r-1} \left (\alpha -  \frac{a}{q} \right ) \right| \le \frac{1}{2q},
$$
which implies
$$
\frac{1}{\lVert r!  \ell_1 ... \ell_{r-1} \alpha \rVert } \le \frac{2}{\lVert r! \ell_1 ... \ell_{r-1} a/q \rVert}.
$$
Therefore
\begin{align*}
 \sum_{-x < \ell_1, ... ,\ell_{r-1} <x} &  \min \left \{ x, \frac{1}{\lVert r! \alpha \ell_1 ... \ell_{r-1}  \rVert} \right \}   \ll x \sum_{\substack{-x < \ell_1, ... ,\ell_{r-1} <x \\ q|r!\ell_1 ... \ell_{r-1}}} 1 + \sum_{\substack{-x < \ell_1, ... ,\ell_3 <x \\ q \nmid r!\ell_1 ... \ell_{r-1}}}  \frac{2}{\lVert r!  \ell_1 ... \ell_{r-1} a/q \rVert}.
\end{align*}
Letting $\widetilde{q} = \frac{q}{\gcd(q,r!)}$ and $\tau_d(n)$ for the number of ways of writing $n$ as a product of $d$ natural numbers, we have
\begin{align*}
\sum_{\substack{-x < \ell_1, ... ,\ell_{r-1} <x \\ q|r!\ell_1 ... \ell_{r-1}}} 1  
&\ll 
\sum_{\substack{n \leq x^{r-1}\\ \widetilde{q} \mid n}} \tau_{r-1}(n)
\ll
(r-1)^{r-2} \frac{x^{r-1}}{\widetilde{q}} \log^{r-2}(x).
\end{align*}
Since $q > Q_1$, this is at most $\frac{x^{r-1}}{(\log x)^{c_{10}}}$.
Moreover, we have
$$
\sum_{\substack{-x < \ell_1, ...  ,\ell_{r-1} <x \\ q \nmid r!\ell_1 ... \ell_{r-1}}}  \frac{2}{\lVert r!  \ell_1 ... \ell_{r-1} a/q \rVert} \ll 
x^{r-1 + \epsilon}
$$
Hence, we get
$$
\sup_{\alpha \in \mathfrak{m}} |S_2(\alpha)|^{2^{r-1}} \ll   x^{2^{r-1}-r} ( x^{r})(\log x)^{-c_{10}}
$$
and so 
\[ \sup_{\alpha \in \mathfrak{m}} |S_2(\alpha)|^{2} \ll  x^{2}(\log x)^{-c_{11}}.\]

Recalling~\eqref{eq: z}, we thus get
\begin{equation} \label{eq: minor arc bound}
\sum_{\substack{k \le y }}   \left | \int_{\mathfrak{m} }  S_1(\alpha) S_2(\alpha) e(-\alpha k) \ \mathrm{d} \alpha \right |^2 \ll (x^r \log x ) \sup_{\alpha \in \mathfrak{m}} |S_2(\alpha)|^2  \ll x^{r+2}(\log x)^{-c_{11}}.
\end{equation}
\section{Putting it all together}\label{sec:together}
We are finally ready to estimate 
\[
E := \sum_{k \leq y}
\left \vert \int_0^1 S_1(\alpha) S_2(\alpha) e(-\alpha k)  - \mathfrak{S}_{n_0, M_0}(k) x \right \vert^2.
\]
By the circle method and \eqref{eq: marjor arc}, we have that 
\[
E = \sum_{k \leq y} \left \vert  x |\Psi(k)| +(\log x)^{c_2} x^{1 - \epsilon'} +  \int_{\mathfrak{M}} E_1(\alpha)S_2(\alpha) e(-\alpha k)\mathrm{d}\alpha +  \int_{\mathfrak{m} }  S_1(\alpha) S_2(\alpha) e(-\alpha k) \ \mathrm{d} \alpha \right \vert^2.
\]
Applying Cauchy--Schwarz and the bounds \eqref{eq: Psi square average bound}, the trivial bound,  \eqref{eq: bound for e} and \eqref{eq: minor arc bound},  we have
\begin{align*}
E 
&\ll \frac{x^2y}{(\log x)^{c_4}}+ yx^{2-\epsilon'''} + \frac{x^2 z}{(\log x)^{c_8}} + x^{r+2}(\log x)^{-c_{11}}\\
& \ll_B \frac{y x^2}{(\log x)^B},
\end{align*}
which completes the proof.

\section{Integral Hasse principle on average for certain generalised Ch\^atelet surfaces}\label{sec:chats}
 Let $a \in \Z - \{0,1\}$ be squarefree, let  $r \geq 3$ be an integer,  and let  $k \in \Z$. 
Let $ \mathcal{X}_{a,r,k} \subset \left( \mathcal{O}_{\Q(\sqrt{a})} \otimes \A^1_{\Z} \right) \times  \A^1_{\Z,t}$ be given by the equation
\[N_{\Q(\sqrt{a})/\Q}(\textbf{z}) = t^r + k \neq 0.\]
\begin{theorem} \label{chat} Let $a \in \Z - \{0,1\}$ be squarefree and such that  2 does not ramify in $\mathcal{O}_{\Q(\sqrt{a})}$.  Let $r \in \NN$ be a prime such that $p \not\equiv 1 \bmod{r}$ for all primes $p|a$. Then for 100\%  of $k \in \NN$ (ordered naively by size) we have $\mathcal{X}_{a,r,k}(\Q) \neq \emptyset$. If, moreover, $\mathcal{O}_{\Q(\sqrt{a})}$ has narrow class number at most 2, then  for 100\%  of $k \in \NN$ (ordered naively by size) we have $\mathcal{X}_{a,r,k}(\Z) \neq \emptyset$.
\end{theorem}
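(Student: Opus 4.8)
The plan is to construct, for $100\%$ of $k$, an explicit point on $\mathcal X_{a,r,k}$ by the fibration method, using Corollary~\ref{cor} in place of Schinzel's hypothesis to locate a good value of the fibre parameter $t$. The key observation is that if $n$ is an integer for which $p:=n^r+k$ is prime and represented by the norm form, then $(t,\mathbf{z})=(n,\pi)$ solves $N_{\Q(\sqrt a)/\Q}(\mathbf{z})=t^r+k\neq 0$, where $\pi$ is any element of $\Q(\sqrt a)^\times$ of norm $p$ for the rational statement, and any generator of a prime ideal above $p$ for the integral statement. So the whole problem reduces to producing, for almost all $k$, a prime $n^r+k$ lying in a suitable arithmetic progression.

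Next I would describe ``$n^r+k$ is a norm'' by a congruence on $n$. Since $2$ is unramified in $\mathcal O_{\Q(\sqrt a)}$, the discriminant of $\Q(\sqrt a)$ is the odd squarefree integer $a\equiv 1\bmod{4}$. By the Hasse norm theorem, a prime $p$ coprime to $2a$ (which $n^r+k$ automatically is, being huge) is a norm from $\Q(\sqrt a)^\times$ if and only if $\legendre{p}{\ell}=1$ for every prime $\ell\mid a$, the conditions at $2$, at the archimedean place, and the splitting condition at $p$ all being consequences of this given $a\equiv 1\bmod{4}$ and quadratic reciprocity. Hence there is a nonempty set $\mathcal S$ of residue classes modulo the \emph{odd} number $|a|$ (the class of $1$ works) such that every prime $p\equiv c\bmod{|a|}$ with $c\in\mathcal S$ is a global norm; this yields the rational point. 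For the integral point one additionally needs the prime ideal $\mathfrak p$ above $p$ to be \emph{narrowly} principal, $\mathfrak p=(\pi)$ with $N(\pi)=+p$; when the narrow class number is $\le 2$ the narrow class group coincides with its own genus group, so narrow principality is again cut out by congruences to the modulus $|a|$ (the genus field is unramified at $2$), and ``$p$ split with $\mathfrak p$ narrowly principal'' forces $\legendre{p}{\ell}=1$ for all $\ell\mid a$ (write $p=N(\pi)\equiv\pi^2\bmod{\mathfrak l}$ at a ramified prime $\mathfrak l\mid\ell$), so one may use the same kind of nonempty set $\mathcal S$. Non-emptiness follows either by noting $c=1$ works, or by Chebotarev applied to the narrow Hilbert class field, which under our hypotheses is abelian over $\Q$ and unramified at $2$.

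Now fix $c\in\mathcal S$ and set $M_0=|a|$. The hypothesis $p\not\equiv 1\bmod{r}$ for all $p\mid a$ means precisely $\gcd(r,\ell-1)=1$ for every $\ell\mid a$, so $u\mapsto u^r$ is a bijection of $(\Z/\ell\Z)^\times$ and of $\Z/\ell\Z$, hence of $\Z/|a|\Z$; consequently, for every $k$ there is a unique $n_0=n_0(k)\in\{0,\dots,|a|-1\}$ with $n_0^r\equiv c-k\bmod{|a|}$, and then every prime of the form $n^r+k$ with $n\equiv n_0\bmod{M_0}$ lies in the class $c\bmod{|a|}$. Applying Corollary~\ref{cor} with this $M_0$ (for each of the finitely many values of $n_0$), for all but $O(y(\log x)^{-C})$ of the $k\le y$ one has $\sum_{n\le x,\ n\equiv n_0(k)\bmod{M_0}}\Lambda(n^r+k)=\mathfrak S_{n_0(k),M_0}(k)\,x+O(x(\log x)^{-B})$. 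By construction $\gcd(M_0,n_0^r+k)=1$, and each Euler factor $\frac{p-n_{k,p}}{p-1}$ of the singular series is strictly positive (a congruence of degree $r$ has $\le r<p$ roots for $p>r$, and exactly one, $u\equiv 0$, when $p\mid k$), so $\mathfrak S_{n_0(k),M_0}(k)>0$ unless $-k$ is a perfect $r$-th power, a density-zero set on which $n^r+k$ is never prime anyway. Outside a further density-zero set of $k$ for which $\mathfrak S_{n_0(k),M_0}(k)$ is too small compared with $(\log x)^{-B}$ — controlled by a standard estimate for this singular series — the main term beats the error and the $O(x^{1/2+\epsilon})$ contribution of proper prime powers, so for some $n\le x$ with $n\equiv n_0(k)\bmod{M_0}$ the value $n^r+k$ is genuinely prime, producing the desired point; this handles $100\%$ of $k$.

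The main obstacle is coordinating the arithmetic with the analysis: the arithmetic progression dictated by ``$n^r+k$ is a norm'' must simultaneously be (i) describable by a congruence to an \emph{odd} modulus — essential because, when $k$ is odd, $p=n^r+k$ is forced into a fixed class modulo powers of $2$ over which we have no control; (ii) contained in the image of $n\mapsto n^r\bmod{M_0}$; and (iii) one on which the Bateman--Horn singular series $\mathfrak S_{n_0,M_0}(k)$ does not vanish and is not abnormally small. The hypothesis that $2$ is unramified secures (i) via genus theory; the hypothesis $p\not\equiv 1\bmod{r}$ for $p\mid a$ secures (ii); the hypothesis that the narrow class number is $\le 2$ is exactly what turns narrow principality (needed for the \emph{integral} statement) into a congruence condition; and (iii) reduces to the elementary positivity of the Euler factors together with standard control of the exceptional $k$. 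Finally, since a point is written down explicitly, no separate verification of local solubility or of a Brauer--Manin condition is required.
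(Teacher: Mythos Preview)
Your argument is correct and takes a genuinely different route from the paper. The paper works in the style of the fibration method: it fixes the set $S$ of ramified primes, shows by a direct Hensel construction that every $k$ admits a ``primitive'' local point at each $p\in S$, then uses compactness of these local solution sets to extract finitely many congruence boxes for $(t,k)$; Corollary~\ref{cor} is invoked with $M_0$ a suitable power of $\prod_{p\in S}p$ and $n_0$ the centre of a box, producing a prime $q=n^r+k$ coprime to $a$; local solubility of the fibre conic at the remaining places is then checked via Hilbert reciprocity, and the integral statement is outsourced to Mitankin's criterion \cite[Prop.~1.2]{Mi}. You instead bypass the compactness/reciprocity machinery entirely by characterising the target set of primes arithmetically: using $a\equiv 1\bmod 4$ you reduce ``$p$ is a global norm'' (and, under $h^+\le 2$, ``$\mathfrak p$ is narrowly principal'') to explicit Legendre-symbol conditions modulo the odd number $|a|$ via genus theory, and then exploit the hypothesis $p\not\equiv 1\bmod r$ for $p\mid a$ to invert $u\mapsto u^r$ on $\Z/|a|\Z$ and select $n_0(k)$ directly. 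Your approach is more elementary and more transparent about where each hypothesis is used (the unramifiedness of $2$ gives an odd modulus; the congruence condition on $r$ gives surjectivity of the $r$-th power map; $h^+\le 2$ identifies the narrow class group with the genus group). The paper's approach, on the other hand, is closer to the general fibration template and would adapt more readily to other norm forms or base fields where an explicit genus-theoretic description is unavailable. The only places where your write-up is thin are the positivity and lower-bound control of $\mathfrak S_{n_0,M_0}(k)$ for $100\%$ of $k$; these are indeed standard (the Euler product is absolutely convergent when $-k$ is not a perfect $r$-th power, and one can show $\mathfrak S(k)\gg (\log k)^{-\delta}$ outside a density-zero set by bounding the number of $k\le y$ with many small prime factors $\equiv 1\bmod r$), but a line making this explicit would strengthen the exposition.
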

\begin{proof} Consider the open affine variety
\[ \begin{array}{rrll}
\mathcal{X}_{a,r}: &  N_{\Q(\sqrt{a})/\Q}(\textbf{z})  = &  t^r + x \neq 0 & \subset \left( \mathcal{O}_{\Q(\sqrt{a})} \otimes \A^1_\Z \right) \times \A_{\Z, t}^1 \times \A^1_{\Z, x},\\
\end{array}\]
There are natural maps projecting to the $t$- and the $x$-coordinates given by
\[ \begin{array}{rl}
\textup{pr}_t:& \mathcal{X}_{a,r}   \to \A_{\Z, t}^1,\\
\textup{pr}_x:& \mathcal{X}_{a,r} \to \A_{\Z, x}^1.
\end{array}\]
We write $\mathcal{X}_{a,r,x_0} := (\textup{pr}_x)^{-1}(\{x_0\})$ for the fibre of $\textup{pr}_x$ above $x_0 \in \A_{\Z, x}^1$ and $\mathcal{X}_{a,r,x_0,t_0}  := \xi^{-1}((x_0,t_0))$ for the fibre of $\xi$ above $(x_0,t_0) \in \mathcal{U}_{a,r}$, where $\mathcal{U}_{a,r} : t^r + x \neq 0 \subset \A_{\Z, t}^1 \times  \A_{\Z, x}^1$ and $\xi: \mathcal{X}_{a,r} \to \mathcal{U}_{a,r}$ is the natural map.

Let $S := \{p \textup{ prime: $p$ is ramified in $\mathcal{O}_{\Q(\sqrt{a})}$}\}$.
For $p \in S$, we let $U_p \subset \mathcal{X}_{a,r}(\Z_p)$ be the set of ``primitive" solutions in the following sense: a point $(\textbf{z}_p, t_p, x_p) \in (\mathcal{O}_K \otimes \Z_p, \Z_p, \Z_p)$ is in $U_p$ if and only if $ N_{\Q(\sqrt{a})/\Q}(\textbf{z}_p)  =t_p^r + x_p \in \Z_p^\times$.  

 Consider the set 
\[ N_{S}:= 	\left\{ k  \in \NN:  \prod_{p \in S} (U_p \cap \mathcal{X}_{a,r,k}(\Z_p)) \neq \emptyset \right  \}.\]
We claim that $N_S = \NN$. (We remark that we use the restrictions on $a$ and $r$ in the statement of the theorem in order to prove this claim. Relaxing or removing these conditions would still yield a positive density result at the end, but not necessarily with a density 1.) Indeed, since for all  $p \in S$ we have by assumption that  $p \not\equiv 1 \bmod{r}$, and since this is equivalent to $\gcd(r, p-1)=1$, it follows that the map $u \mapsto u^r$ is a bijection on $\Z/p\Z$. Hence, for each $k \in \NN$,  we can find some $\overline{t} \in (\Z/p\Z)^\times$ and a square $\square \in \Z$ not divisible by $p$ satisfying the congruence equation $\overline{t}^r + k - \square \equiv 0 \bmod{p}$: if $k \not \equiv 1 \bmod{p}$, then we can take $\square = 1$, while if  $k \equiv 1 \bmod{p}$ and $p \geq 5$, then we can take $\square = 4$; in either case, by Hensel's lemma  we can then lift this solution $\overline{t}$ to some $t_p \in \Z_p$ with $t_p^r + k - \square = 0$; it follows that, with respect to the standard basis $\{1, \sqrt{a}\}$ for $\Q(\sqrt{a})$, the $\Z_p$-point $((\sqrt{\square} + \sqrt{a} \cdot 0), t_p) \in (\Z_p + \sqrt{a}\Z_p) \times \Z_p \subset (\mathcal{O}_{\Q(\sqrt{a})} \otimes \Z_p) \times \Z_p$ lies in $U_p \cap \mathcal{X}_{a,r,k}(\Z_p)$. If $p=3$ and $k \equiv 1 \bmod{3}$, we can instead note that the solution $\overline{y} \equiv 1$ to the congruence equation $\overline{y}^2 - k \equiv 0 \bmod{3}$ can be lifted to a $\Z_3$-solution $y_3$ to $y^2-k = 0$. In this case, the $\Z_3$-point $((y_3 + \sqrt{a} \cdot 0), 0) \in (\Z_3 + \sqrt{a}\Z_3) \times \Z_3 \subset (\mathcal{O}_{\Q(\sqrt{a})} \otimes \Z_3) \times \Z_3$ lies in $U_3 \cap \mathcal{X}_{a,r,k}(\Z_3)$. 
This proves  our claim that the set $N_S$ is exactly $\NN$.

Now consider the fibre product 
\[
\begin{tikzcd}
(\mathcal{O}_{\Q(\sqrt{a})} \otimes \Z_p) \times_{\Z_p} (\Z_p \times \Z_p) \arrow[r, "\phi_1"] \arrow[d, "\phi_2"] & \Z_p \times \Z_p \arrow[d, "t^r+x" ] \\
\mathcal{O}_{\Q(\sqrt{a})} \otimes \Z_p \arrow[r, "N_{\Q(\sqrt{a})/\Q}(\textbf{z})" ] & \Z_p.
\end{tikzcd}
\]
It is easy to see that the inverse image of $\Z_p^\times$ under the composite continuous map $N_{\Q(\sqrt{a})/\Q}(\textbf{z}) \circ \phi_2 = (t^r+x) \circ \phi_1$ is precisely $U_p$.
Since $p \Z_p$ is open in $\Z_p$, it follows that $\Z_p^\times$ is closed in $\Z_p$. Hence, $U_p$ is compact, and thus so are $\prod_{p \in S} \textup{pr}_x(U_p)$ and  $\prod_{p \in S} \textup{pr}_t(U_p)$. By compactness, it follows that there exists some small enough $\nu > 0$ and some points $(w_p^{(i_p)})_{p \in S} \in  U_p$ for $i_p \in \{ 1, ..., n_p\}$ and for all $p \in S$ such that 
\begin{itemize}
\item we can cover $\prod_{p \in S} \textup{pr}_x(U_p)$ and  $\prod_{p \in S} \textup{pr}_t(U_p)$ by open balls of radius at most $\nu$ around some $(w_p^{(i_p)})_{p \in S}$,  and 
\item if $p \in S$ and $t_p, x_p \in \Z_p$ is such that $| x_p - \textup{pr}_x(w_p^{(i_p)})|_p, | t_p - \textup{pr}_t(w_p^{(i_p)})|_p < \nu$, then $\mathcal{X}_{a,r,x_p, t_p}(\Q_p) \neq \emptyset$. 
\end{itemize}

Let $C_S:= \{ \prod_{p \in S} w_p^{(i_p)} : i_p \in \{1, ..., n_p\}\}$ be the set of all possible combinations across $p \in S$ of the centres of the balls as above.
By the Chinese Remainder Theorem, for each $\prod_{p \in S} w_p^{(i_p)}  \in C_S$ there exist integers $u^{(\prod_{p \in S} i_p)}, v^{(\prod_{p \in S} i_p)} \in \Z$ such that $|u^{(\prod_{p \in S} i_p)} - \textup{pr}_x(w^{(i_p)}_p)|_p < \nu$ and  $|v^{(\prod_{p \in S} i_p)} - \textup{pr}_t(w^{(i_p)}_p)|_p < \nu$,  for all $p \in S$. Then any $k \in N_{S} = \NN$ must satisfy $| k - u^{(\prod_{p \in S} i_p)}|_p < \nu$, for all $p \in S$, for some $\prod_{p \in S} w_p^{(i_p)}  \in C_S$. Note that, by our definition of $N_S$ and by the compactness conditions, any such $\prod_{p \in S} w_p^{(i_p)}  \in C_S$ satisfies $((\textup{pr}_t(w_p^{(i_p)}))^r + \textup{pr}_x(w_p^{(i_p)})) \in \Z_p^\times $  for all $p \in S$, thus implying that $\gcd((v^{(\prod_{p \in S} i_p)})^r + u^{(\prod_{p \in S} i_p)}, \prod_{p\in S} p^{\lfloor-\log_p(\nu)\rfloor}) = 1$  for all $p \in S$. 

Hence, we can use Corollary \ref{cor} with $n_0 := v^{(\prod_{p \in S} i_p)}$ and $M_0 := \prod_{p\in S} p^{\lfloor-\log_p(\nu)\rfloor}$ to deduce that for 100\% of $k \in N_S = \NN$  with $|k -  u^{(\prod_{p \in S} i_p)}|_p < \nu $, there exists some integer $n \in \NN$ such that $| n - v^{(\prod_{p \in S} i_p)}|_p < \nu$ for all $p \in S$ and such that $n^r + k = q$ is prime. We note that, since  $((v^{(i_p)}_p)^r + u^{(i_p)}_p) \in \Z^\times_p$  for all $p \in S$, we have that $\gcd(q, a) = 1$.

It is now standard to show that $\mathcal{X}_{a,r,k,n} (\A_\Q) \neq \emptyset$: indeed, $\mathcal{X}_{a,r,k,n} (\RR) \neq \emptyset$ since $q > 0$; for $p \in S$, this follows by our compactness assumptions above; for $p \notin S$ and $p \neq q$, we have that $q \in \Z_p^\times$ and since $p$ is unramified in $\mathcal{O}_{\Q(\sqrt{a})}$, it is well-known that norms are surjective onto the units, and thus $q \in N_{\Q(\sqrt{a})/\Q}(\mathcal{O}_K \otimes \Z_p)$; finally, for $p = q$, this follows from the global reciprocity law for Hilbert symbols and by the fact that the relevant local Hilbert symbols at all the other places $\neq q$ are all $1$.

Hence, we have just shown that the conic
\[ \mathcal{X}_{a,r,k,n}: N_{\Q(\sqrt{a})/\Q}(\textbf{z}) = q\]
is everywhere locally soluble. By the Hasse principle for conics, this implies that $\mathcal{X}_{a,r,k,n}(\Q) \neq \emptyset$.

If we assume further that $\mathcal{O}_{\Q(\sqrt{a})}$ has narrow class number at most 2, then we can apply \cite[Proposition 1.2]{Mi} and \cite[Remark 3.1]{Mi} to conclude that $\mathcal{X}_{a,r,k,n}(\Z) \neq \emptyset$, and thus that $\mathcal{X}_{a,r,k}(\Z) \neq \emptyset$. (Indeed, note that $\left(\frac{a}{q}\right) = 1$ as $\gcd(q,a) = 1$ and as the valuation of the norm at $q$ is odd; this implies that $q$ is not inert in $\mathcal{O}_{\Q(\sqrt{a})}$ and so \cite[Proposition 1.2]{Mi} applies.)

Finally, noting that any $k \in N_S = \NN$ satisfies  $|k -  u^{(\prod_{p \in S} i_p)}|_p < \nu $ for some $\prod_{p \in S} w_p^{(i_p)}  \in C_S$, the above argument shows that for 100\% of $k \in \NN$ we have that $\mathcal{X}_{a,r,k}(\Q) \neq \emptyset$ and, if $\mathcal{O}_{\Q(\sqrt{a})}$ has narrow class number at most 2, that  $\mathcal{X}_{a,r,k}(\Z) \neq \emptyset$, as required.
\end{proof}

\begin{remark}
By imposing analogous restrictions on a given cyclic extension $K/\QQ$, a similar proof as the one above would show that a positive proportion of the varieties defined by
\[
N_{K/\QQ}(\z) = t^r + k \neq 0,
\] have a $\QQ$-point. Moreover, it is highly likely that by imposing further restrictions on $K$ one can also recover a 100\% type result for $\QQ$ points.
\end{remark}

\begin{remark} The proof of Theorem \ref{chat} also yields the following result:  let $a \in \Z - \{0,1\}$ be squarefree and let $r \in \NN$ be a prime. Let $S := \{p \textup{ prime: $p$ is ramified in $\mathcal{O}_{\Q(\sqrt{a})}$}\}$. For $p \in S$,  let $U_p \subset \mathcal{X}_{a,r}(\Z_p)$ be the set of ``primitive" solutions given by solutions $(\textbf{z}_p, t_p, x_p) \in (\mathcal{O}_K \otimes \Z_p, \Z_p, \Z_p)$ such that $ N_{\Q(\sqrt{a})/\Q}(\textbf{z}_p)  =t_p^r + x_p \in \Z_p^\times$.  Then for 100\%  of $k \in \NN$ (ordered naively by size) we have
\[ \prod_{p \in S} (U_p \cap \mathcal{X}_{a,r,k}(\Z_p)) \neq \emptyset  \Longrightarrow  \mathcal{X}_{a,r,k}(\Q) \neq \emptyset.\]
 If, moreover, $\mathcal{O}_{\Q(\sqrt{a})}$ has narrow class number at most 2, then  for 100\%  of $k \in \NN$ (ordered naively by size) we have 
\[ \prod_{p \in S} (U_p \cap \mathcal{X}_{a,r,k}(\Z_p)) \neq \emptyset  \Longrightarrow  \mathcal{X}_{a,r,k}(\Z) \neq \emptyset.\]
Our further restrictions on $a$ and $r$ in the statement of Theorem \ref{chat} are there in order to ensure that $ \prod_{p \in S} (U_p \cap \mathcal{X}_{a,r,k}(\Z_p)) \neq \emptyset$ for 100\% of the $k$'s. By ignoring these restrictions, it is always possible to get a lower bound on the density  of $k$'s satisfying  $ \prod_{p \in S} (U_p \cap \mathcal{X}_{a,r,k}(\Z_p)) \neq \emptyset$ by considering, for example, all $k$'s  congruent to 1 modulo a high enough power of $\prod_{p \in S} p$, because any such $k$ will be both a unit and a square in $\Z_p$ for all $p \in S$.
\end{remark}

\end{document}